\newtheorem{theorem}{Theorem}[section]
\newtheorem{lemma}[theorem]{Lemma}
\newtheorem{prop}[theorem]{Proposition}
\newcommand{\nm}{\noalign{\smallskip}}
\newcommand{\ds}{\displaystyle}
\newcommand{\p}{\partial}
\newcommand{\eqnref}[1]{(\ref {#1})}
\newcommand{\Cbb}{\mathbb{C}}
\newcommand{\Nbb}{\mathbb{N}}
\newcommand{\Rbb}{\mathbb{R}}
\newcommand{\la}{\langle}
\newcommand{\ra}{\rangle}
\newcommand{\Ccal}{\mathcal{C}}
\newcommand{\Hcal}{\mathcal{H}}
\newcommand{\Lcal}{\mathcal{L}}
\newcommand{\Kcal}{\mathcal{K}}
\newcommand{\Rcal}{\mathcal{R}}
\newcommand{\Scal}{\mathcal{S}}
\newcommand{\Tcal}{\mathcal{T}}
\def\Ba{{\bf a}}
\def\Bb{{\bf b}}
\def\Bc{{\bf c}}
\def\Bf{{\bf f}}
\def\Bg{{\bf g}}
\def\Bn{{\bf n}}
\def\Bu{{\bf u}}
\def\Bv{{\bf v}}
\def\Bx{{\bf x}}
\def\By{{\bf y}}
\def\Bz{{\bf z}}
\def\BA{{\bf A}}
\def\BB{{\bf B}}
\def\BC{{\bf C}}
\def\BD{{\bf D}}
\def\BF{{\bf F}}
\def\BI{{\bf I}}
\def\BK{{\bf K}}
\def\BS{{\bf S}}
\def\BT{{\bf T}}
\def\BU{{\bf U}}
\newcommand{\Ga}{\alpha}
\newcommand{\Gb}{\beta}
\newcommand{\Gd}{\delta}
\newcommand{\Ge}{\epsilon}
\newcommand{\Gf}{\phi}
\newcommand{\Gvf}{\varphi}
\newcommand{\Gg}{\gamma}
\newcommand{\Gk}{\kappa}
\newcommand{\Gl}{\lambda}
\newcommand{\Gn}{\eta}
\newcommand{\Gm}{\mu}
\newcommand{\Gv}{\nu}
\newcommand{\Gp}{\pi}
\newcommand{\Gt}{\theta}
\newcommand{\Gr}{\rho}
\newcommand{\Gs}{\sigma}
\newcommand{\Go}{\omega}
\newcommand{\Gx}{\xi}
\newcommand{\Gy}{\psi}
\newcommand{\Gz}{\zeta}
\newcommand{\GD}{\Delta}
\newcommand{\GG}{\Gamma}
\newcommand{\GO}{\Omega}
\newcommand{\BGG}{{\bf \GG}}
\newcommand{\BGf}{\mbox{\boldmath $\Gf$}}
\newcommand{\BGvf}{\mbox{\boldmath $\Gvf$}}
\newcommand{\Bpsi}{\mbox{\boldmath $\Gy$}}
\newcommand{\wBS}{\widetilde{\BS}}
\newcommand{\wGl}{\widetilde{\Gl}}
\newcommand{\wGm}{\widetilde{\Gm}}
\newcommand{\wGv}{\widetilde{\Gv}}
\newcommand{\wBU}{\widetilde{\BU}}
\newcommand{\beq}{\begin{equation}}
\newcommand{\eeq}{\end{equation}}
\def\ol{\overline}
\newcommand{\hatna}{\widehat{\nabla}}
\numberwithin{equation}{section}
\numberwithin{figure}{section}
\begin{document}

\title{Spectral properties of the Neumann-Poincar\'e operator and cloaking by anomalous localized resonance for the elasto-static system\thanks{\footnotesize This work is supported by the Korean Ministry of Education, Sciences and Technology through NRF grants Nos. 2010-0017532 (to H.K) and 2012003224 (to S.Y)}}

\author{Kazunori Ando\thanks{Department of Mathematics, Inha University, Incheon
402-751, S. Korea (ando@inha.ac.kr, 22151063@inha.edu, hbkang@inha.ac.kr, kskim@inha.ac.kr)} \and Yong-Gwan Ji\footnotemark[2] \and Hyeonbae Kang\footnotemark[2] \and Kyoungsun Kim\footnotemark[2] \and Sanghyeon Yu\thanks{Seminar for Applied Mathematics, ETH Z\"urich, R\"amistrasse 101, CH-8092 Z\"urich, Switzerland (sanghyeon.yu@sam.math.ethz.edu)} }

\maketitle

\begin{abstract}
We first investigate spectral properties of the Neumann-Poincar\'e (NP) operator for the Lam\'{e} system of elasto-statics. We show that the elasto-static NP operator can be symmetrized in the same way as that for Laplace operator. We then show that even if elasto-static NP operator is not compact even on smooth domains, its spectrum consists of eigenvalues which accumulates to two numbers determined by Lam\'e constants. We then derive explicitly eigenvalues and eigenfunctions on disks and ellipses. We then investigate resonance occurring at eigenvalues and anomalous localized resonance at accumulation points of eigenvalues. We show on ellipses that cloaking by anomalous localized resonance takes place at accumulation points of eigenvalues.
\end{abstract}

\noindent{\footnotesize {\bf AMS subject classifications}. 35J47 (primary), 35P05 (secondary)}

\noindent{\footnotesize {\bf Key words}. Neumann-Poincar\'e operator, Lam\'e system, linear elasticity, spectrum, resonance, cloaking by anomalous localized resonance}

\tableofcontents

\section{Introduction}

The Neumann-Poincar\'e  (NP) operator is a boundary integral operator which appears naturally when solving classical boundary value problems for the Laplace equation using layer potentials. Recently there is rapidly growing interest in the spectral properties of the NP operator in relation to plasmonics and cloaking by anomalous localized resonance. Plasmon resonance and anomalous localized resonance occur at eigenvalues and at the accumulation point of eigenvalues, respectively (see for example \cite{ACKLM-ARMA-13, MFZ-PR-05} and references therein). We emphasize that the spectral nature of the NP operator differs depending on smoothness of the domain on which the NP operator is defined. If the domain has a smooth boundary, $C^{1, \Ga}$ for some $\Ga>0$ to be precise, then the NP operator is compact on $L^2$ or $H^{-1/2}$ space. Since the NP operator can be realized as a self-adjoint operator by introducing a new inner product (see \cite{Kang-SMF-15, KPS-ARMA-07}), its spectrum consists of eigenvalues converging to $0$. If the domain has a corner, the corresponding NP operator may exhibit a continuous spectrum. For this and recent development of spectral theory of the NP operator for the Laplace operator we refer to \cite{KLY} and references therein.

The purpose of this paper is twofold. We first extend the spectral theory of the NP operator for the Laplace operator to that for the Lam\'e system of elasto-statics, and then investigate resonance and cloaking by anomalous localized resonance.

To describe results of this paper in a precise manner, we first introduce some notation.
Let $\GO$ be a bounded domain in $\Rbb^d$ ($d=2,3$) with the Lipschitz boundary, and let $(\Gl, \Gm)$ be the Lam$\acute{\rm e}$ constants for $\GO$ satisfying the strong convexity condition
\beq\label{strongcon}
\Gm > 0 \quad \mbox{and} \quad d\Gl + 2\Gm > 0.
\eeq
The isotropic elasticity tensor $\Cbb = ( C_{ijkl} )_{i, j, k, l = 1}^d$ and the corresponding elastostatic system $\Lcal_{\Gl,\Gm}$ are defined by
\beq
C_{ijkl} := \Gl \, \Gd_{ij} \Gd_{kl} + \mu \, ( \Gd_{ik} \Gd_{jl} + \Gd_{il} \Gd_{jk} )
\eeq
and
\beq
\Lcal_{\Gl,\Gm} \Bu := \nabla \cdot \Cbb \hatna \Bu = \Gm \Delta \Bu + (\Gl + \Gm) \nabla \nabla \cdot \Bu,
\eeq
where $\hatna$ denotes the symmetric gradient, namely,
\beq
\hatna \Bu := \frac{1}{2} \left( \nabla \Bu + \nabla \Bu^T \right). \nonumber
\eeq
Here $T$ indicates the transpose of a matrix. The corresponding conormal derivative on $\p \GO$ is defined to be
\beq\label{conormal}
\p_\nu \Bu := (\Cbb \hatna \Bu) \Bn = \Gl (\nabla \cdot \Bu) \Bn + 2\Gm (\hatna \Bu) \Bn \quad \mbox{on } \p \GO,
\eeq
where $\Bn$ is the outward unit normal to $\p \GO$.

Let $\BGG = \left( \Gamma_{ij} \right)_{i, j = 1}^d$ is the Kelvin matrix of fundamental solutions to the Lam\'{e} operator $\Lcal_{\Gl, \mu}$, namely,
\beq\label{Kelvin}
  \Gamma_{ij}(\Bx) =
  \begin{cases}
    - \ds \frac{\Ga_1}{4 \pi} \frac{\Gd_{ij}}{|\Bx|} - \frac{\Ga_2}{4 \pi} \displaystyle \frac{x_i x_j}{|\Bx|^3}, & \text{ if } d = 3, \\
    \nm
    \ds \frac{\Ga_1}{2 \pi} \Gd_{ij} \ln{|\Bx|} - \frac{\Ga_2}{2 \pi} \displaystyle \frac{x_i x_j}{|\Bx| ^2}, & \text{ if } d = 2,
  \end{cases}
\eeq
where
\beq
  \Ga_1 = \frac{1}{2} \left( \frac{1}{\mu} + \frac{1}{2 \mu + \Gl} \right) \quad\mbox{and}\quad \Ga_2 = \frac{1}{2} \left( \frac{1}{\mu} - \frac{1}{2 \mu + \Gl} \right).
\eeq
The NP operator for the Lam\'e system is defined by
\beq\label{BK}
\BK [\Bf] (\Bx) := \mbox{p.v.} \int_{\p \GO} \p_{\nu_\By} {\bf \GG} (\Bx-\By) \Bf(\By) d \Gs(\By) \quad \mbox{a.e. } \Bx \in \p \GO.
\eeq
Here, p.v. stands for the Cauchy principal value, and the conormal derivative $\p_{\nu_\By}\BGG (\Bx-\By)$ of the Kelvin matrix with respect to $\By$-variables is defined by
\beq\label{kerdef}
\p_{\nu_\By}\BGG (\Bx-\By) \Bb = \p_{\nu_\By} (\BGG (\Bx-\By) \Bb)
\eeq
for any constant vector $\Bb$.

We show that the NP operator $\BK$ can be realized as a self-adjoint operator on $H^{1/2}(\p\GO)^d$ ($H^{1/2}$ is a Sobolev space) by introducing a new inner product in a way parallel to the case of the Laplace operator. But, there is a significant difference between NP operators for the Laplace operator and the Lam\'e operator. The NP operator for the Lam\'e operator is {\it not} compact even if the domain has a smooth boundary (this was observed in \cite{DKV-Duke-88} correcting an error in \cite{Kup-book-65}), which means that we cannot infer directly that the NP operator has point spectrum (eigenvalues). However, we are able to show in this paper that the elasto-static NP operator on planar domains with $C^{1, \Ga}$ boundaries has only point spectrum. In fact, we show that on such domains
\beq\label{statcompact}
\BK^2 - k_0^2 I \ \mbox{ is compact},
\eeq
where
\beq\label{Gkdef}
k_0 = \frac{\mu}{2(2\mu+\Gl)}.
\eeq
It is worth mentioning that we are able to prove \eqnref{statcompact} only in two dimensions, and it is not clear if it is true in three dimensions. Probably there is a polynomial $p$ such that $p(\BK)$ is compact.
As an immediate consequence of \eqnref{statcompact}, we show that the spectrum of $\BK$ consists of eigenvalues which accumulate at $k_0$ and $-k_0$.
We then explicitly compute eigenvalues of $\BK$ on disks and ellipses. It turns out that $k_0$ and $-k_0$ are eigenvalues of infinite multiplicities (there are two other eigenvalues of finite multiplicities) on disks, while on ellipses $k_0$ and $-k_0$ are accumulation points of eigenvalues, but not eigenvalues, and the rates of convergence to $k_0$ and $-k_0$ are exponential.

Using the spectral properties of the NP operator we investigate resonance, especially cloaking by anomalous localized resonance (CALR). CALR on dielectric plasmonic material was first discovered in \cite{MN-PRSA-06}. It is shown that if we coat a dielectric material of circular shape by a plasmonic material of negative dielectric constant (with a dissipation), then huge resonance occurs and the polarizable dipole is cloaked when it is within the cloaking region. This result has been extended to general sources other than polarizable dipole sources \cite{ACKLM-ARMA-13, KLSW-CMP-14}. It is also shown in \cite{AK14} that CALR occurs not only on the coated structure, but also on ellipses.

In this paper we show that CALR also occurs on elastic structures. We consider an ellipse $\GO$ embedded in $\Rbb^2$, where the Lam\'e constants of the background are $(\Gl,\Gm)$ and those of $\GO$ are $(c+i\Gd)(\Gl,\Gm)$. Here $c$ is a negative constant and $\Gd$ is a loss parameter which tends to $0$. So, $\GO$ represents an elastic material with negative Lam\'e constants. Discussion on existence of such materials is beyond the scope of this paper. However, we refer to \cite{LLBW-Nature-01} for existence (in composites) of negative stiffness material, and to \cite{KM-JMPS-14} for effective properties.  We show that if $c$ satisfies
\beq\label{kzero}
k(c):=\frac{c+1}{2(c-1)}= k_0 \ \text{ or } \ -k_0,
\eeq
then CALR takes place as $\Gd \to 0$. See section \ref{sec:ALR} for precise description of CALR with estimates. Here we highlight a few points. In dielectric case CALR occurs when $k(c)=0$ or $c=-1$ since $0$ is the accumulation point of eigenvalues. In the elasto-static case, \eqnref{kzero} is fulfilled if (and only if)
\beq
c= -\frac{\Gl+3\mu}{\Gl+\mu} \quad\text{or} \quad c= -\frac{\Gl+\mu}{\Gl+3\mu}. 
\eeq
It turns out that the cloaking region when $k(c)=k_0$ is different from that when $k(c)=-k_0$. We also mention that since $0<k_0<1/2$, \eqnref{kzero} holds only if $c<0$. The inclusion $\GO$ is assumed to be elliptic shape since eigenvalues and eigenfunctions of the NP operator can be explicitly computed. We emphasize that anomalous localized resonance does not occur on a disk since there $k_0$ and $-k_0$ are (isolated) eigenvalues of the corresponding NP operator.

\section{Spectral properties of the NP operator}\label{subsec:NP}

\subsection{Layer potentials and the NP operator}

Let $\GO$ be a bounded domain in $\Rbb^d$ ($d=2,3$) with the Lipschitz boundary.
Let $H^{1/2}(\p\GO)$ denote the usual $L^2$-Sobolev space of order $1/2$ and $H^{-1/2}(\p\GO)$ its dual space. Let $\Hcal := H^{1/2}(\p\GO)^d$ and $\Hcal^*:= H^{-1/2}(\p\GO)^d$. The duality pairing of $\Hcal^*$ and $\Hcal$ is denoted by $\la \cdot, \cdot \ra$. Let
\beq
\Psi:= \{ \Bv|_{\p\GO} \in \Hcal ~| ~\Bv=(v_1, \ldots, v_d)^T \mbox{ satisfies } \p_j v_i + \p_i v_j = 0 \mbox{ for all } 1 \leq i, j \leq d \}.
\eeq
Observe that $\Psi$ is of dimension 6 in three dimensions and 3 in two dimensions. For example, $\Psi$ in two dimensions is spanned by
\beq
\begin{bmatrix} 1 \\ 0 \end{bmatrix}, \quad
\begin{bmatrix} 0 \\ 1 \end{bmatrix}, \quad
\begin{bmatrix} y \\ -x \end{bmatrix}.
\eeq
It is worth mentioning that if $\Bv|_{\p\GO} \in \Psi$, then $\Bv$ satisfies $\Lcal_{\Gl,\Gm}\Bv = 0$ in $\GO$ and $\p_\nu \Bv = 0$ on $\p \GO$, and the converse holds.
Define
\beq
\Hcal^*_\Psi :=\{\BGvf \in \Hcal^* ~:~ \la \BGvf, \Bf \ra=0 \mbox{ for all } \Bf \in \Psi\}.
\eeq
Since $\Psi$ contains constant functions, we have in particular $\int_{\p D} \BGvf d\Gs = 0$ if $\BGvf \in \Hcal^*_\Psi$. We emphasize that if $\Lcal_{\Gl,\Gm}\Bu = 0$ in $\GO$, then $\p_\nu \Bu \in \Hcal^*_\Psi$. In fact, if $\Bf=\Bv|_{\p\GO} \in \Psi$, then
$$
\int_{\p \GO} \p_\nu \Bu \cdot \Bf d\Gs = \int_{\p \GO} \p_\nu \Bu \cdot \Bf d\Gs - \int_{\p \GO} \Bu \cdot \p_\nu \Bv d\Gs =0.
$$

The single and double layer potentials on $\p \GO$ associated with the Lam\'e parameter $(\Gl,\Gm)$ are defined by
\begin{align*}
& \BS [\BGvf] (\Bx) := \int_{\p\GO} \BGG (\Bx-\By) \BGvf(\By) d \Gs(\By), \quad \Bx \in \Rbb^d,\\
& \BD [\Bf] (\Bx) := \int_{\p\GO} \p_{\nu_\By}\BGG (\Bx-\By) \Bf(\By) d \Gs(\By), \quad \Bx \in \Rbb^d \setminus \p \GO
\end{align*}
for $\BGvf \in \Hcal^*$ and $\Bf \in \Hcal$, where $\BGG$ is the Kelvin matrix defined in \eqnref{Kelvin}. They enjoy the following jump relations \cite{DKV-Duke-88} (see also \cite{AK-book-07}):
\begin{align}
\BD [\Bf] |_{\pm} = \left(\mp \frac{1}{2}I + \BK \right) [\Bf] \quad & \mbox{a.e. on } \p \GO, \label{j-double}\\
\p_\nu \BS [\BGvf] |_{\pm} = \left(\pm \frac{1}{2} I + \BK^*\right) [\BGvf] \quad & \mbox{a.e. on } \p \GO, \label{j-single}
\end{align}
where $\BK$ is defined by \eqnref{BK}, and $\BK^*$ is the adjoint operator of $\BK$ on $L^2(\p \GO)^d$, that is,
\beq
\BK^* [\BGvf] (\Bx) := \mbox{p.v.} \int_{\p \GO} \p_{\nu_\Bx}{\bf \Gamma} (\Bx-\By) \BGvf(\By) d \Gs(\By) \quad \mbox{a.e. } \Bx \in \p \GO.
\eeq
Here and afterwards, the subscripts $+$ and $-$ indicate the limits (to $\p\GO$) from outside and inside $\GO$, respectively. The operator $\BK^*$ is also called the (elasto-static) NP operator on $\p\GO$.

The following lemma collects some facts to be used in the sequel, proofs of which can be found in \cite{CC-JMAA-07, DKV-Duke-88, Mitrea-JFAA-99}.
\begin{lemma}
\begin{itemize}
\item[\text{(i)}] $\BK$ is bounded on $\Hcal$, and $\BK^*$ is on $\Hcal^*$.
\item[\text{(ii)}] The spectrum of $\BK^*$ on $\Hcal^*$ lies in $(-1/2, 1/2]$.
\item[\text{(iii)}]  $1/2 I - \BK^*$ is invertible on $\Hcal^*_\Psi$.
\item[\text{(iv)}] $\BS$ as an operator defined on $\p\GO$ is bounded from $\Hcal^*$ into $\Hcal$.
\item[\text{(v)}] $\BS: \Hcal^* \to \Hcal$ is invertible in three dimensions.
\end{itemize}
\end{lemma}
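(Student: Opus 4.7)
The plan is to reduce each of the five statements to the corresponding fact for the Laplace layer potentials, augmented by a Rellich-type identity specific to the Lam\'e system. For (i) and (iv), the Kelvin matrix $\BGG(\Bx-\By)$ has the same principal singularity as the fundamental solution of $-\GD$ multiplied by a constant matrix, plus a term of the form $(x_i-y_i)(x_j-y_j)/|\Bx-\By|^d$, which is a standard Calder\'on--Zygmund kernel. Hence $L^2(\p\GO)^d$-boundedness of $\BK$ and $\BK^*$ will follow from the Coifman--McIntosh--Meyer theorem and its vector-valued version in \cite{DKV-Duke-88}; interpolating with the $H^{\pm 1}(\p\GO)^d$ estimates from the same reference then gives $\BK:\Hcal\to\Hcal$ and $\BK^*:\Hcal^*\to\Hcal^*$. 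For (iv), $\BS$ has kernel one order smoother than the one defining $\BK$, so viewing it as an operator of order $-1$ on $\p\GO$ and combining with the standard trace theorem yields the $\Hcal^*\to\Hcal$ continuity.

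For (ii) and (iii) my key tool will be the Rellich identity for the Lam\'e system, which on a Lipschitz boundary gives, for every $\BGvf\in\Hcal^*$,
\[
\| \p_\nu \BS[\BGvf]|_+ \|_{L^2(\p\GO)} \simeq \| \p_\nu \BS[\BGvf]|_- \|_{L^2(\p\GO)}.
\]
Combining this with the jump relation \eqnref{j-single} turns it into $\|(\tfrac12 I + \BK^*)\BGvf\| \simeq \|(-\tfrac12 I + \BK^*)\BGvf\|$, from which any $\lambda\in\sigma(\BK^*)$ with $|\lambda|>1/2$ would give a contradiction; this proves (ii) up to the endpoints. The endpoint $-1/2$ is excluded by the following argument: an element of $\ker(\tfrac12 I + \BK^*)$ forces $\BS[\BGvf]$ to solve the exterior Neumann problem with zero data, the decay of $\BS[\BGvf]$ at infinity (automatic in three dimensions, ensured by a moment condition in two) then forces $\BS[\BGvf]\equiv 0$ outside, continuity across $\p\GO$ gives $\BS[\BGvf]\equiv 0$ inside as well, and the conormal jump yields $\BGvf=0$; a closed-range argument based on the same Rellich estimate upgrades injectivity to invertibility. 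For (iii), I would use the dual statement on the interior side: the interior Neumann problem with data in $\Hcal^*_\Psi$ is uniquely solvable modulo $\Psi$ with a single-layer representation $\BS[\BGvf]$, which gives $(\tfrac12 I-\BK^*)\BGvf = -\Bg$ and hence invertibility of $\tfrac12 I-\BK^*$ on $\Hcal^*_\Psi$.

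Finally for (v), Betti's identity yields
\[
\la \BGvf, \BS[\BGvf] \ra = \int_{\Rbb^3} \Cbb\, \hatna \BS[\BGvf] : \hatna \BS[\BGvf] \, d\Bx,
\]
and the right-hand integral converges because $\BS[\BGvf](\Bx) = O(|\Bx|^{-1})$ at infinity in three dimensions. The strong convexity condition \eqnref{strongcon}, Korn's inequality, and the decay at infinity together force $\BS[\BGvf]\equiv 0$ on all of $\Rbb^3$, and the conormal jump then gives $\BGvf=0$; surjectivity follows from Fredholm theory once $\BS$ is written as a compact perturbation of the invertible single-layer operator for $-\GD$. The main obstacle underlying (ii) and (iii), and the substantive difference from the Laplace case, is the non-compactness of $\BK^*$ recorded in \cite{DKV-Duke-88}: Riesz--Schauder does not apply and I must rely on Rellich-type norm equivalences to obtain the closed-range property, and hence invertibility, directly rather than through a Fredholm alternative.
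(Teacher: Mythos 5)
The paper itself does not prove this lemma; it is quoted from \cite{CC-JMAA-07, DKV-Duke-88, Mitrea-JFAA-99}, so your sketch must be judged against the arguments in those references on its own terms. Items (i), (iii), and (iv) of your proposal track the standard route correctly. The genuine gaps are in (ii) and (v).

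For (ii), the Rellich-type norm equivalence $\bigl\|(\frac12 I + \BK^*)\BGvf\bigr\| \simeq \bigl\|(-\frac12 I + \BK^*)\BGvf\bigr\|$ does \emph{not} by itself exclude spectrum outside $[-1/2,1/2]$. Applied to an eigenvector with real eigenvalue $\lambda$, the equivalence forces only $c \le |\lambda+\frac12|/|\lambda-\frac12| \le C$; this ratio tends to $1$ as $|\lambda|\to\infty$, so large $|\lambda|$ are perfectly consistent with the estimate. What Rellich rules out is a punctured neighbourhood of $\pm\frac12$, and hence gives injectivity (and, with closed range, invertibility) of $\pm\frac12 I + \BK^*$; but it does not bound the spectral radius by $\frac12$. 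To obtain the full inclusion you need an additional mechanism: either the quadratic-form identity expressing $(\BK^*[\BGvf],\BGvf)_*/(\BGvf,\BGvf)_*$ as the ratio of (exterior minus interior) to (exterior plus interior) elastic energies of $\BS[\BGvf]$ — which presupposes the positive-definiteness of $-\wBS$ that the paper only sets up in Section 2.2 — or a continuity/homotopy argument, as in \cite{DKV-Duke-88, Mitrea-JFAA-99}, showing that $tI - \BK^*$ remains invertible with uniformly bounded inverse as $t$ decreases from a large value down to $\frac12$. Your argument also silently passes from the $L^2$-level Rellich estimate to a statement about the spectrum on $\Hcal^*=H^{-1/2}(\p\GO)^d$; that transfer needs a duality/interpolation step.

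For (v), the claim that $\BS$ is a compact perturbation of the (vector-valued) Laplace single layer is incorrect. The Kelvin matrix \eqnref{Kelvin} contains the term $x_ix_j/|\Bx|^3$, which is homogeneous of the \emph{same} degree $-1$ as $\delta_{ij}/|\Bx|$; the corresponding piece of $\BS$ therefore has exactly the same mapping order from $\Hcal^*$ to $\Hcal$ as the Laplace single layer, and is not compact. Your Betti-formula energy argument does correctly give injectivity of $\BS$, but surjectivity (equivalently, the index-zero Fredholm property) requires a different input: in \cite{DKV-Duke-88} it is established directly via Rellich estimates for the Lam\'e system, and on a smooth boundary one could instead invoke ellipticity of the full Lam\'e principal symbol — but not a compact-perturbation argument off $-\GD$.
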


In two dimensions $\BS$ may not be invertible. In fact, there is a bounded domain $\GO$ on which $\BS[\BGvf]=0$ on $\p\GO$ for some $\BGvf \neq 0$ (see the next subsection). It is worthwhile mentioning that there is such a domain for the Laplace operator \cite{Verch-JFA-84}.

\begin{lemma}
$\Psi$ is the eigenspace of $\BK$ on $\Hcal$ corresponding to $1/2$.
\end{lemma}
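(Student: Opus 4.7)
The plan is to prove the two inclusions $\Psi \subset \ker(\BK - \tfrac{1}{2} I)$ and $\ker(\BK - \tfrac{1}{2} I) \subset \Psi$ separately. Both rely on the jump relation \eqnref{j-double} for the double layer potential, together with Green's representation formula and the characterization, quoted just before the Lemma, of $\Psi$ as the trace space of solutions to $\Lcal_{\Gl,\Gm}\Bv = 0$ with vanishing conormal derivative.

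For the forward inclusion, fix $\Bf \in \Psi$. By the quoted characterization there exists $\Bv$ on $\ol{\GO}$ (in fact on all of $\Rbb^d$, since elements of $\Psi$ come from the three/six-dimensional space of rigid motions) satisfying $\Bv|_{\p\GO} = \Bf$, $\Lcal_{\Gl,\Gm}\Bv = 0$ in $\GO$, and $\p_\nu \Bv = 0$ on $\p\GO$. The Green representation formula for the Lam\'e system gives
\beq
\Bv(\Bx) = \BD[\Bv|_{\p\GO}](\Bx) - \BS[\p_\nu \Bv](\Bx) = \BD[\Bf](\Bx), \quad \Bx \in \GO.
\eeq
Taking the interior trace and applying \eqnref{j-double} yields $\Bf = \bigl(\tfrac{1}{2}I + \BK\bigr)[\Bf]$, i.e.\ $\BK[\Bf] = \tfrac{1}{2}\Bf$.

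For the reverse inclusion, suppose $\Bf \in \Hcal$ satisfies $\BK[\Bf] = \tfrac{1}{2}\Bf$, and set $\Bu := \BD[\Bf]$ in $\Rbb^d \setminus \p\GO$. Then $\Lcal_{\Gl,\Gm}\Bu = 0$ off $\p\GO$, and the jump relations give $\Bu|_- = \bigl(\tfrac{1}{2}I + \BK\bigr)[\Bf] = \Bf$ and $\Bu|_+ = \bigl(-\tfrac{1}{2}I + \BK\bigr)[\Bf] = 0$. Since the kernel $\p_{\nu_\By}\BGG(\Bx-\By)$ decays like $|\Bx|^{-d+1}$ at infinity uniformly in $\By \in \p\GO$, $\Bu$ tends to $0$ as $|\Bx| \to \infty$ in both $d=2$ and $d=3$, so the exterior Dirichlet uniqueness theorem for $\Lcal_{\Gl,\Gm}$ forces $\Bu \equiv 0$ in $\Rbb^d \setminus \ol{\GO}$. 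Hence $\p_\nu \Bu|_+ = 0$, and by the continuity of the conormal derivative of the double layer potential across $\p\GO$, also $\p_\nu \Bu|_- = 0$. Therefore $\Bu|_\GO$ satisfies $\Lcal_{\Gl,\Gm}\Bu = 0$ in $\GO$ with $\p_\nu \Bu = 0$ on $\p\GO$, so the converse direction of the characterization gives $\Bf = \Bu|_- \in \Psi$.

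The main technical point is the continuity of $\p_\nu \BD[\Bf]$ across a Lipschitz boundary for $\Bf \in H^{1/2}(\p\GO)^d$; this is a classical fact for the Lam\'e double layer (available in \cite{DKV-Duke-88}) but is the only ingredient that is not purely algebraic. A secondary point is to confirm exterior uniqueness in two dimensions: the decay $\Bu(\Bx) \to 0$ excludes the logarithmic rigid-motion-type solutions, so standard energy/Korn arguments close the argument in both dimensions uniformly.
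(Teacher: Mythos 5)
Your proof is correct and follows essentially the same route as the paper: Green's representation for the rigid motion $\Bv$ plus the jump relations \eqnref{j-double} for $\BD$, with exterior vanishing of $\BD[\Bf]$ and continuity of $\p_\nu\BD[\Bf]$ across $\p\GO$ forcing $\p_\nu\Bu|_- = 0$ in the converse direction. The only cosmetic difference is that in the forward direction you read Green's formula inside $\GO$ and take the interior trace, whereas the paper evaluates it outside $\ol{\GO}$ (where the representation vanishes) and takes the exterior trace.
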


\begin{proof}
Let $\Bf \in \Psi$. Then $\Bf = \Bv|_{\p\GO}$ where $\Bv$ satisfies $\Lcal_{\Gl, \mu} \Bv =0$ in $\GO$ and $\p_\nu \Bv=0$ on $\p\GO$.
So, we have for $\Bx\in \Rbb^d \setminus \ol{\GO}$
\begin{align*}
\BD [\Bf](\Bx)
& = \int_{\p \GO} \p_{\nu_\By} \BGG(\Bx-\By) \Bf (\By) d\Gs(\By)\\
& = \int_{\p \GO}\left[\p_{\nu_\By} \BGG(\Bx-\By) \Bv(\By) - \BGG (\Bx-\By) \p_\nu\Bv(\By) \right] d\Gs(\By) = 0.
\end{align*}
So we infer from \eqnref{j-double} that
\beq\label{BK1/2}
\BK[\Bf]= \frac{1}{2} \Bf .
\eeq

Conversely, if \eqnref{BK1/2} holds, then we have from \eqnref{j-double} that $\BD [\Bf]|_- = \Bf$ and $\BD [\Bf](\Bx)=0$ for $\Bx\in \Rbb^d \setminus \ol{\GO}$. So $\p_\nu \BD [\Bf]|_- = \p_\nu \BD [\Bf]|_+ =0$. It implies that $\Bf \in \Psi$. This completes the proof.
\end{proof}

Let $N_d:=\frac{d(d+1)}{2}$, which is the dimension of $\Psi$. Let $\{ \Bf^{(j)} \}_{j=1}^{N_d}$ be a basis of $\Psi$ such that
\beq\label{basis2D}
\la \Bf^{(i)}, \Bf^{(j)} \ra =\Gd_{ij},
\eeq
where $\Gd_{ij}$ is the Kronecker's delta. Since $\p_\nu \BS[\Bf^{(j)}]|_- \in \Hcal^*_\Psi$ and $1/2 I - \BK^*$ is invertible on $\Hcal^*_\Psi$, there is a unique $\widetilde{\BGvf}^{(j)} \in  \Hcal^*_\Psi$ such that
$$
\left(\frac{1}{2} I - \BK^*\right) [\widetilde{\BGvf}^{(j)}]= \p_\nu \BS[\Bf^{(j)}]|_- = \left(- \frac{1}{2} I + \BK^*\right) [\Bf^{(j)}] .
$$
Define $\BGvf^{(j)}:= \widetilde{\BGvf}^{(j)} + \Bf^{(j)}$. Then, we have
\beq\label{Gvfeigen}
\BK^*[\BGvf^{(j)}] = \frac{1}{2} \BGvf^{(j)}.
\eeq
Moreover, we have
\beq\label{Gvfpsi}
\la \BGvf^{(j)}, \Bf^{(i)} \ra = \la \widetilde{\BGvf}^{(j)}, \Bf^{(i)} \ra + \la \Bf^{(j)}, \Bf^{(i)} \ra = \Gd_{ij},
\eeq
which, in particular, implies that $\BGvf^{(j)}$'s are linearly independent.

Let
\beq
W:= \mbox{span} \left\{ \BGvf^{(1)}, \ldots, \BGvf^{(N_d)} \right\},
\eeq
and let
\beq
\Hcal_W :=\{\Bf \in \Hcal ~:~ \la \BGvf, \Bf \ra=0 \mbox{ for all } \BGvf \in W \}.
\eeq

\begin{lemma}\label{decomplemma}
The following hold.
\begin{itemize}
\item[{\rm (i)}] Each $\BGvf \in \Hcal^*$ is uniquely decomposed as
\beq\label{decomHstar}
\BGvf= \BGvf' + \BGvf'' := \BGvf' +  \sum_{j=1}^{N_d} \la \BGvf, \Bf^{(j)} \ra \BGvf^{(j)},
\eeq
and $\BGvf' \in \Hcal^*_\Psi$.

\item[{\rm (ii)}] Each $\Bf \in \Hcal$ is uniquely decomposed as
\beq\label{decomH}
\Bf= \Bf' + \Bf'' := \Bf' + \sum_{j=1}^{N_d} \la  \BGvf^{(j)}, \Bf \ra \Bf^{(j)},
\eeq
and $\Bf' \in \Hcal_W$.

\item[{\rm (iii)}] $\BS$ maps $W$ into $\Psi$, and $\Hcal^*_\Psi$ into $\Hcal_W$.

\item[{\rm (iv)}] $W$ is the eigenspace of $\BK^*$ corresponding to the eigenvalue $1/2$.
\end{itemize}
\end{lemma}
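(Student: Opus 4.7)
The plan is to handle the four parts in sequence, since each builds on the biorthogonality \eqref{Gvfpsi} together with the eigenvalue identity \eqref{Gvfeigen}. Parts (i) and (ii) will reduce to short computations in the $\Hcal^{*}$--$\Hcal$ duality pairing; (iii) will combine the jump relation \eqref{j-single} with the symmetry of the single layer potential; and (iv) will be closed using the invertibility of $\tfrac{1}{2}I-\BK^{*}$ on $\Hcal^{*}_{\Psi}$ from the preceding lemma.

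For (i), I define $\BGvf'':=\sum_{j=1}^{N_{d}}\la\BGvf,\Bf^{(j)}\ra\BGvf^{(j)}$ and set $\BGvf':=\BGvf-\BGvf''$. Pairing $\BGvf'$ with $\Bf^{(i)}$ and using $\la\BGvf^{(j)},\Bf^{(i)}\ra=\Gd_{ij}$ from \eqref{Gvfpsi} immediately gives $\la\BGvf',\Bf^{(i)}\ra=0$ for every $i$, so $\BGvf'\in\Hcal^{*}_{\Psi}$. Uniqueness is forced because any other representation $\BGvf=\Bpsi+\sum_{j}c_{j}\BGvf^{(j)}$ with $\Bpsi\in\Hcal^{*}_{\Psi}$, tested against $\Bf^{(i)}$, yields $c_{i}=\la\BGvf,\Bf^{(i)}\ra$. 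Part (ii) is the mirror computation: I set $\Bf'':=\sum_{j}\la\BGvf^{(j)},\Bf\ra\Bf^{(j)}$, test $\Bf-\Bf''$ against $\BGvf^{(i)}$, and again invoke \eqref{Gvfpsi}.

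For (iii), I exploit that $\BK^{*}[\BGvf^{(j)}]=\tfrac{1}{2}\BGvf^{(j)}$ by \eqref{Gvfeigen}, so the interior jump formula \eqref{j-single} gives $\p_{\nu}\BS[\BGvf^{(j)}]|_{-}=(-\tfrac{1}{2}I+\BK^{*})[\BGvf^{(j)}]=0$. Since $\Lcal_{\Gl,\Gm}\BS[\BGvf^{(j)}]=0$ in $\GO$, the characterization of $\Psi$ recorded just before the single/double layer definitions forces $\BS[\BGvf^{(j)}]|_{\p\GO}\in\Psi$, hence $\BS(W)\subset\Psi$. For the second half I use the symmetry identity $\la\Bpsi,\BS[\BGvf]\ra=\la\BGvf,\BS[\Bpsi]\ra$, which follows on $L^{2}$ from the symmetry of the Kelvin matrix $\BGG$ and extends to the $H^{\pm 1/2}$ duality by density. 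For $\BGvf\in\Hcal^{*}_{\Psi}$ this gives $\la\BGvf^{(j)},\BS[\BGvf]\ra=\la\BGvf,\BS[\BGvf^{(j)}]\ra=0$, so $\BS[\BGvf]\in\Hcal_{W}$.

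Finally, for (iv), the inclusion $W\subset\ker(\tfrac{1}{2}I-\BK^{*})$ is precisely \eqref{Gvfeigen}. For the converse, given $\BGvf$ with $\BK^{*}[\BGvf]=\tfrac{1}{2}\BGvf$, I decompose $\BGvf=\BGvf'+\BGvf''$ via (i); since $\BGvf''\in W$ is automatically a $\tfrac{1}{2}$-eigenvector by linearity and \eqref{Gvfeigen}, subtraction leaves $(\tfrac{1}{2}I-\BK^{*})[\BGvf']=0$ with $\BGvf'\in\Hcal^{*}_{\Psi}$, and invertibility of $\tfrac{1}{2}I-\BK^{*}$ on $\Hcal^{*}_{\Psi}$ then forces $\BGvf'=0$. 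The step that needs the most care rather than pure bookkeeping is the symmetry of $\BS$ in the $\Hcal^{*}$--$\Hcal$ duality, upon which both the second half of (iii) and the closing reduction in (iv) implicitly rely.
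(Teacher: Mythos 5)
Your parts (i)--(iii) run essentially parallel to the paper's proof: the biorthogonality $\la\BGvf^{(j)},\Bf^{(i)}\ra=\Gd_{ij}$ drives both decompositions, and (iii) combines the jump relation $\p_\nu\BS[\BGvf^{(j)}]|_- = (-\tfrac12 I+\BK^*)[\BGvf^{(j)}]=0$ with the symmetry $\la\Bpsi,\BS[\BGvf]\ra=\la\BGvf,\BS[\Bpsi]\ra$ exactly as the paper does. You are right to flag that symmetry as the one step that needs justification rather than bookkeeping; the paper uses it silently.

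Part (iv) is where you genuinely diverge, and your route is shorter. After reducing to $(\tfrac12 I - \BK^*)[\BGvf']=0$ with $\BGvf'\in\Hcal^*_\Psi$, you close by invoking the invertibility of $\tfrac12 I-\BK^*$ on $\Hcal^*_\Psi$ from the standing lemma. The paper instead proves $\BGvf'=0$ from scratch via a Green's-identity energy argument: it uses (iii) to get $\BS[\BGvf']\in\Psi$, hence $\la\BGvf',\BS[\BGvf']\ra=0$, pairs this with the jump formula to show the interior and exterior Dirichlet energies of $\BS[\BGvf']$ vanish, concludes $\BS[\BGvf']$ is constant in $\Rbb^d$, and finally recovers $\BGvf'=\p_\nu\BS[\BGvf']|_+-\p_\nu\BS[\BGvf']|_-=0$. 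Your argument substitutes a black-box appeal to the invertibility result for the paper's hands-on energy computation; both are valid, and yours is cleaner. The one hidden step in your version is that $\tfrac12 I-\BK^*$ must carry $\Hcal^*_\Psi$ into itself for the invertibility statement to yield injectivity in the form you need; this holds because for $\BGvf\in\Hcal^*_\Psi$ and $\Bf\in\Psi$ one has $\la\BK^*[\BGvf],\Bf\ra=\la\BGvf,\BK[\Bf]\ra=\tfrac12\la\BGvf,\Bf\ra=0$ by the lemma characterizing $\Psi$ as the $\tfrac12$-eigenspace of $\BK$, and it would be worth making that remark explicit.
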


\begin{proof}
For $\BGvf \in \Hcal^*$, and let $\BGvf''$ be as in \eqnref{decomHstar}.
Then, one can immediately see from \eqnref{Gvfpsi} that $\la \BGvf', \Bf^{(j)} \ra =0$ for all $j$, and hence $\BGvf' \in \Hcal^*_\Psi$. Uniqueness of the decomposition can be proved easily. (ii) can be proved similarly.

Thanks to \eqnref{Gvfeigen} we have $\p_\nu \BS[\BGvf^{(j)}]|_{-}=0$, and so $\BS[\BGvf^{(j)}]|_{\p\GO} \in \Psi$.
If $\BGvf \in \Hcal^*_\Psi$, then
$$
\la \BGvf^{(j)}, \BS[\BGvf] \ra = \la  \BGvf , \BS[\BGvf^{(j)}] \ra = 0
$$
for all $j$. So, $\BS$ maps $\Hcal^*_\Psi$ into $\Hcal_W$. This proves (iii).

Suppose that $\BK^*[\BGvf] = 1/2 \BGvf$ and that $\BGvf$ admits the decomposition \eqnref{decomHstar}. Then $\BK^*[\BGvf'] = 1/2 \BGvf'$. So we have from (iii) that $\BS[\BGvf'] \in \Psi$, and hence $\la \BGvf', \BS[\BGvf'] \ra=0$. Since $\int_{\p\GO} \BGvf' d\Gs=0$, we have from \eqnref{j-single}
\begin{align}
-\la \BGvf', \BS[\BGvf'] \ra & = \la \p_\nu \BS[\BGvf']|_-, \BS[\BGvf'] \ra
- \la \p_\nu \BS[\BGvf']|_+, \BS[\BGvf'] \ra \nonumber \\
&= \| \nabla \BS[\BGvf'] \|_{L^2(\GO)}^2 + \| \nabla \BS[\BGvf'] \|_{L^2(\Rbb^d \setminus \GO)}^2. \label{greenarg}
\end{align}
So $\BS[\BGvf']=\mbox{const.}$ in $\Rbb^d$. Thus we have $\BGvf' = \p_\nu \BS[\BGvf']|_+ - \p_\nu \BS[\BGvf']|_- =0$, and hence $\BGvf \in W$. Thus (iv) is proved. This completes the proof.
\end{proof}

\subsection{Symmetrization of the NP operator}

In this section we introduce a new inner product on $\Hcal^*$ (and $\Hcal$) which makes the NP operator $\BK^*$ self-adjoint.

In three dimensions, $\BS[\BGvf](\Bx)=O(|\Bx|^{-1})$ as $|\Bx| \to \infty$. Using this fact, one can show that $-\BS$ is positive-definite. In fact,  similarly to \eqnref{greenarg} we obtain
\beq
-\la \BGvf, \BS[\BGvf] \ra
= \| \nabla \BS[\BGvf] \|_{L^2(\GO)}^2 + \| \nabla \BS[\BGvf] \|_{L^2(\Rbb^d \setminus \GO)}^2 \ge 0.
\eeq
If $\la \BGvf, \BS[\BGvf] \ra=0$, then $\BS[\BGvf]$ is constant in $\Rbb^3$. Thus we have $\BGvf = \p_\nu \BS[\BGvf]|_+ - \p_\nu \BS[\BGvf]|_- =0$.
So, if we define
\beq
( \BGvf, \Bpsi )_* := -\la \BGvf, \BS[\BGvf] \ra,
\eeq
it is an inner product on $\Hcal^*$.

In two dimensions, the same argument shows that $-\BS$ is positive-definite on $\Hcal^*_\Psi$. In fact, if $\BGvf \in \Hcal^*_\Psi$, then $\BS[\BGvf](\Bx)=O(|\Bx|^{-1})$ as $|\Bx| \to \infty$, and hence we can apply the same argument as in three dimensions. However, $-\BS$ may fail to be positive on $W$: if $\GO$ is the disk of radius $r$ (centered at 0), then we have
\beq\label{appendixA}
\BS [\Bc] (\Bx) = \left[\Ga_1 r \ln r - \frac{\Ga_2 r}{2} \right] \Bc  \quad \mbox{for } \Bx \in \GO.
\eeq
for any constant vector $\Bc= (c_1 , c_2)^T$. It shows that $-\BS$ can be positive or negative depending on $r$.
To see \eqnref{appendixA}, we note that
\begin{align*}
\BS [\Bc]_i (\Bx) & = \frac{\Ga_1 c_i}{2\pi} \int_{\p \GO_r} \ln |\Bx-\By| d\Gs(\By) - \frac{\Ga_2}{2\pi} \sum_{j=1}^2 c_j \int_{\p \GO_r} \frac{(\Bx-\By)_i (\Bx-\By)_j}{|\Bx-\By|^2} d\Gs(\By)\\
& = \Ga_1 c_i \Scal[1] (\Bx) - \Ga_2 \Big(x_i \Bc \cdot \nabla \Scal[1] (\Bx) - \Bc \cdot \nabla \Scal [y_i] (\Bx)\Big),
\end{align*}
where $\Scal$ is the electro-static single layer potential, namely,
\beq\label{electrosingle}
\Scal[f] (\Bx) = \frac{1}{2\pi} \int_{\p \GO_r} \ln |\Bx-\By| f(\By) \, d\Gs(\By).
\eeq
It is known (see \cite{ACKLM-ARMA-13}) that $\Scal[1](\Bx) = r \ln r$ and $\Scal[y_i] (\Bx) = - \frac{r x_i}{2}$ for $\Bx \in \GO$. So we have \eqnref{appendixA}.

We introduce a variance of $\BS$ in two dimensions. For $\BGvf \in \Hcal^*$, define using the decomposition \eqnref{decomHstar}
\beq
\wBS[\BGvf]:= \BS[\BGvf'] + \sum_{j=1}^{3} \la \BGvf, \Bf^{(j)} \ra \Bf^{(j)}.
\eeq
We emphasize that $\wBS[\BGvf]=\BS[\BGvf]$ for all $\BGvf \in \Hcal^*_\Psi$ and $\wBS[\BGvf^{(j)}]=\Bf^{(j)}$, $j=1,2,3$.
In view of \eqnref{Gvfpsi} and Lemma \ref{decomplemma} (iii), we have
\beq
- \la \BGvf, \wBS[\BGvf] \ra = - \la \BGvf', \BS[\BGvf'] \ra + \sum_{j=1}^{3} |\la \BGvf, \Bf^{(j)} \ra|^2 .
\eeq
So, $-\wBS$ is positive-definite on $\Hcal^*$. In fact, since $- \la \BGvf', \BS[\BGvf'] \ra \ge 0$, we have $- \la \BGvf, \wBS[\BGvf] \ra \ge 0$. If $- \la \BGvf, \wBS[\BGvf] \ra = 0$, then $- \la \BGvf', \BS[\BGvf'] \ra=0$ and $\sum_{j=1}^{3} |\la \BGvf, \Bf^{(j)} \ra|^2=0$. So, $\BGvf'=0$ and $\la \BGvf, \Bf^{(j)} \ra=0$ for all $j$, and hence $\BGvf=0$.

Let us also denote $\BS$ in three dimensions by $\wBS$ for convenience. Define
\beq\label{innerstar}
( \BGvf, \Bpsi )_* := - \la \BGvf, \wBS[\Bpsi] \ra, \quad \BGvf, \Bpsi \in \Hcal^*.
\eeq

\begin{prop}
 $( \cdot, \cdot )_*$ is an inner product on $\Hcal^*$. The norm induced by $( \cdot, \cdot )_*$, denoted by $\| \cdot \|_*$, is equivalent to $\| \cdot \|_{-1/2}$.
\end{prop}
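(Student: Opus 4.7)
The plan is to verify the three inner-product axioms and then sandwich $\|\cdot\|_*$ between constant multiples of $\|\cdot\|_{-1/2}$. Sesquilinearity is inherited from $\wBS$ and the duality pairing. For the symmetry $\la\BGvf,\wBS[\Bpsi]\ra = \la\Bpsi,\wBS[\BGvf]\ra$, in three dimensions this is immediate from the symmetry of the Kelvin matrix $\BGG$ in both its indices and its argument. In two dimensions, inserting the decompositions \eqnref{decomHstar} of $\BGvf$ and $\Bpsi$ into $\la\BGvf,\wBS[\Bpsi]\ra$ produces four terms. The cross term $\la\BGvf'',\BS[\Bpsi']\ra$ vanishes because $\BS[\Bpsi'] \in \Hcal_W$ by Lemma \ref{decomplemma}(iii), and the cross term $\sum_j \la\Bpsi,\Bf^{(j)}\ra \la\BGvf',\Bf^{(j)}\ra$ vanishes termwise because $\BGvf' \in \Hcal^*_\Psi$. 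What remains,
\[
\la\BGvf,\wBS[\Bpsi]\ra \;=\; \la\BGvf',\BS[\Bpsi']\ra + \sum_{j=1}^{N_d} \la\BGvf,\Bf^{(j)}\ra \la\Bpsi,\Bf^{(j)}\ra,
\]
is visibly symmetric in $(\BGvf,\Bpsi)$. Positive-definiteness was already established in the paragraph preceding the proposition.

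The inequality $\|\BGvf\|_* \lesssim \|\BGvf\|_{-1/2}$ is routine: by the duality bound and the boundedness of $\wBS : \Hcal^* \to \Hcal$ (which is inherited from boundedness of $\BS$ in the preceding lemma together with the trivially bounded finite-rank correction in \eqnref{innerstar}),
\[
\|\BGvf\|_*^2 \;=\; -\la\BGvf,\wBS[\BGvf]\ra \;\le\; \|\BGvf\|_{-1/2}\,\|\wBS[\BGvf]\|_{1/2} \;\lesssim\; \|\BGvf\|_{-1/2}^2.
\]

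The reverse inequality is the main obstacle. Decomposing $\BGvf = \BGvf' + \BGvf''$ as in \eqnref{decomHstar} and using the symmetry computation above gives
\[
\|\BGvf\|_*^2 \;=\; -\la\BGvf',\BS[\BGvf']\ra \;+\; \sum_{j=1}^{N_d} |\la\BGvf,\Bf^{(j)}\ra|^2,
\]
so it suffices to bound $\|\BGvf''\|_{-1/2}^2$ by the finite sum and $\|\BGvf'\|_{-1/2}^2$ by the first summand. The former is immediate because $\BGvf''$ lies in the finite-dimensional space $W$, on which, by \eqnref{Gvfpsi}, the map $\BGvf''\mapsto (\la\BGvf,\Bf^{(j)}\ra)_j$ is a linear isomorphism. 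For the latter, set $\Bu := \BS[\BGvf']$; Betti's identity on $\GO$ and on $\Rbb^d\setminus\ol{\GO}$, together with \eqnref{j-single}, yields
\[
-\la\BGvf',\BS[\BGvf']\ra \;=\; \int_{\GO}\Cbb\hatna \Bu : \hatna \Bu\,dx + \int_{\Rbb^d\setminus\ol{\GO}}\Cbb\hatna \Bu : \hatna \Bu\,dx.
\]
The exterior integration is legal because $\Bu$ decays at infinity; in two dimensions this decay requires cancellation of the leading logarithmic and dipolar terms in the far-field expansion of $\BS[\BGvf']$, which is exactly what $\BGvf' \perp \Psi$ provides. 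By \eqnref{strongcon} and Korn's inequality on $\GO$ (modulo $\Psi$) and on the exterior domain, the right-hand side is bounded below by $c\|\Bu\|_{H^1}^2$, and hence by $c\|\Bu\|_{H^{1/2}(\p\GO)}^2$ via the trace theorem. Finally, $\BS$ is bounded below as a map into $\Hcal$: in three dimensions from invertibility of $\BS$ (preceding lemma), and in two dimensions from injectivity of $\BS$ on $\Hcal^*_\Psi$ (a consequence of the same energy identity) combined with a closed-range/Fredholm argument. The hardest point throughout is precisely this coercivity on $\Hcal^*_\Psi$, where the logarithmic growth of $\BGG$ in two dimensions, the orthogonality $\BGvf'\perp\Psi$ securing the required decay of $\Bu$, and Korn's inequality on an unbounded domain all have to be handled with care.
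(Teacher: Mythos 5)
Your verification of the inner-product axioms (especially the symmetry of $\la\BGvf,\wBS[\Bpsi]\ra$ via the decomposition \eqnref{decomHstar}, using Lemma~\ref{decomplemma}(iii) and $\BGvf'\in\Hcal^*_\Psi$ to kill the cross terms) is correct and is in fact more explicit than what the paper records; the paper only proves the norm equivalence in its displayed proof and relies on the preceding paragraph for positive-definiteness. Your upper bound $\|\BGvf\|_*\lesssim\|\BGvf\|_{-1/2}$ agrees with the paper's. For the lower bound, however, you and the paper diverge. The paper first records $\|\BGvf\|_{-1/2}\approx\|\wBS[\BGvf]\|_{1/2}$ from bijectivity of $\wBS$ and the open mapping theorem, and then obtains $\|\BGvf\|_{-1/2}\lesssim\|\BGvf\|_*$ by a pure duality trick: for any $\Bpsi$, $|\la\BGvf,\wBS[\Bpsi]\ra|=|(\BGvf,\Bpsi)_*|\le\|\BGvf\|_*\|\Bpsi\|_*\lesssim\|\BGvf\|_*\|\wBS[\Bpsi]\|_{1/2}$, and then one takes the supremum over $\wBS[\Bpsi]\in\Hcal$. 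No Betti identity, no Korn, no trace. Your route instead establishes coercivity of $-\BS$ directly via the elastic energy, and this is genuinely different. Both routes ultimately hinge on the same nontrivial ingredient --- that $\wBS$ (equivalently, $\BS$ restricted to $\Hcal^*_\Psi$) has a bounded inverse --- but once you invoke that at the end of your argument (``injectivity $+$ closed range/Fredholm''), you have already earned $\|\BS[\BGvf']\|_{1/2}\approx\|\BGvf'\|_{-1/2}$, and the duality calculation then finishes the lower bound without any of the Green's--Korn--trace machinery. In that sense your direct coercivity estimate is correct in spirit but logically superfluous.

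There is also a concrete technical gap in the Korn step as written: Korn's inequality ``modulo $\Psi$'' on $\GO$ bounds $\inf_{\Bw\in\Psi}\|\Bu-\Bw\|_{H^1(\GO)}$, not $\|\Bu\|_{H^1(\GO)}$, and nothing in the hypotheses forces the optimizing rigid motion in $\GO$ to match the one coming from the exterior Korn inequality (where the decay forced by $\BGvf'\perp\Psi$ kills the rigid motion). Unless you pass the trace estimate entirely through the exterior --- and then you must work in the Beppo--Levi/weighted setting rather than $H^1$ in two dimensions, because $\BS[\BGvf']$ decays only like $|\Bx|^{-1}$ --- the inequality $\text{energy}\gtrsim\|\BS[\BGvf']\|_{1/2}^2$ does not follow as stated. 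You flag these difficulties yourself, which is fair, but as a proof they remain unresolved; the paper's duality argument sidesteps all of them, which is precisely what it buys you.
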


\begin{proof}
Positive-definiteness of $- \wBS$ implies that $\wBS: \Hcal^* \to \Hcal$ is bijective. So, we have
$$
\| \BGvf \|_{-1/2} \approx \| \wBS[\BGvf] \|_{1/2}.
$$
Here and throughout this paper $A \lesssim B$ means that there is a constant $C$ such that $A \le C B$, and $A \approx B$ means $A \lesssim B$ and $B \lesssim A$. It then follows from the definition \eqnref{innerstar} that
$$
| ( \BGvf, \BGvf )_*| \le \| \BGvf \|_{-1/2} \| \wBS[\BGvf] \|_{1/2} \lesssim \| \BGvf \|_{-1/2}^2.
$$

We have from the Cauchy Schwarz inequality
$$
|\la \BGvf, \wBS[\Bpsi] \ra| = |( \BGvf, \Bpsi )_*| \le \| \BGvf \|_* \| \Bpsi \|_* \lesssim \| \BGvf \|_* \| \wBS[\Bpsi] \|_{1/2}.
$$
So we have
$$
\| \BGvf \|_{-1/2} = \sup_{\Bpsi \neq 0} \frac{|\la \BGvf, \wBS[\Bpsi] \ra|}{\| \wBS[\Bpsi] \|_{1/2}} \lesssim \| \BGvf \|_*.
$$
This completes the proof.
\end{proof}

We may define a new inner product on $\Hcal$ by
\beq
( \Bf, \Bg ):= ( \wBS^{-1}[\Bf], \wBS^{-1}[\Bg] )_*= - \la \wBS^{-1}[\Bf], \Bg \ra, \quad \Bf, \Bg \in \Hcal.
\eeq

\begin{prop}\label{isometry}
$( \cdot, \cdot )$ is an inner product on $\Hcal$. The norm induced by $( \cdot, \cdot )$, denoted by $\| \cdot \|$, is equivalent to $\| \cdot \|_{1/2}$. Moreover, $\wBS$ is an isometry between $\Hcal^*$ and $\Hcal$.
\end{prop}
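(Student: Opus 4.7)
The plan is to reduce every claim to properties of the already-established inner product $(\cdot,\cdot)_*$ via the map $\wBS$. The essential preliminary step is to confirm that $\wBS : \Hcal^* \to \Hcal$ is a bounded bijection with bounded inverse, because the whole proposition is really about transporting structure from $(\Hcal^*, (\cdot,\cdot)_*)$ to $\Hcal$ along $\wBS$. Boundedness of $\wBS$ comes from Lemma 2.1 (iv) together with the definition of $\wBS$ in the 2D case. Injectivity follows from positive-definiteness of $-\wBS$ shown in the previous proposition (if $\wBS[\BGvf]=0$ then $(\BGvf,\BGvf)_*=0$). For surjectivity, the equivalence $\|\BGvf\|_{-1/2}\approx \|\wBS[\BGvf]\|_{1/2}$ recorded in the previous proof implies that $\wBS$ has closed range and trivial kernel, and the identity $(\wBS[\BGvf],\Bpsi)_*=(\BGvf,\wBS[\Bpsi])_*$ (i.e. self-adjointness of $\wBS$ viewed via $\langle\cdot,\cdot\rangle$) forces the range to be dense, hence all of $\Hcal$; then $\wBS^{-1}$ is bounded by the open mapping theorem.

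Once $\wBS^{-1}$ is a well-defined linear isomorphism $\Hcal \to \Hcal^*$, the formula
\[
(\Bf,\Bg) := (\wBS^{-1}[\Bf],\wBS^{-1}[\Bg])_*
\]
is the pullback of an inner product under a linear bijection, so bilinearity, symmetry, and positive-definiteness are immediate. The identity $(\Bf,\Bg) = -\la \wBS^{-1}[\Bf], \Bg\ra$ follows by writing $\Bg = \wBS[\wBS^{-1}[\Bg]]$ and applying the definition \eqnref{innerstar}.

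For the norm equivalence, I chain the equivalences
\[
\|\Bf\| \;=\; \|\wBS^{-1}[\Bf]\|_* \;\approx\; \|\wBS^{-1}[\Bf]\|_{-1/2} \;\approx\; \|\wBS\wBS^{-1}[\Bf]\|_{1/2} \;=\; \|\Bf\|_{1/2},
\]
where the first $\approx$ uses the previous proposition and the second uses the boundedness of both $\wBS$ and $\wBS^{-1}$ between $\Hcal^*$ and $\Hcal$ in their natural Sobolev norms.

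The isometry claim is then a one-line check:
\[
\|\wBS[\BGvf]\|^2 \;=\; (\wBS^{-1}\wBS[\BGvf],\wBS^{-1}\wBS[\BGvf])_* \;=\; (\BGvf,\BGvf)_* \;=\; \|\BGvf\|_*^2.
\]
The only genuinely non-formal step is the surjectivity of $\wBS$; everything else is automatic transport of structure. I expect the main obstacle, if any, to lie in justifying surjectivity rigorously in the two-dimensional case, where one must use the specific construction of $\wBS$ via the decomposition \eqnref{decomHstar} together with Lemma \ref{decomplemma} (iii) to see that the range contains both $\Hcal_W$ (the image of $\Hcal^*_\Psi$) and $\Psi$ (the image of $W$), which together span $\Hcal$.
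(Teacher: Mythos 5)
The paper states Proposition \ref{isometry} without proof, treating it as an immediate corollary of the preceding proposition; your write-up supplies exactly the transport-of-structure argument that the authors leave implicit, and it is correct in spirit. Two small points are worth tightening. First, the expression $(\wBS[\BGvf],\Bpsi)_*=(\BGvf,\wBS[\Bpsi])_*$ is ill-typed, since $(\cdot,\cdot)_*$ pairs two elements of $\Hcal^*$ while $\wBS[\BGvf]\in\Hcal$; what you need, and what makes your density argument run, is the symmetry of $\wBS$ in the duality pairing, $\la\BGvf,\wBS[\Bpsi]\ra=\la\Bpsi,\wBS[\BGvf]\ra$ for $\BGvf,\Bpsi\in\Hcal^*$, which does hold because $\BS$ has the symmetric kernel $\BGG$ and the correction term in the definition of $\wBS$ is symmetric by construction together with Lemma~\ref{decomplemma}(iii). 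Second, and more substantively, you flag surjectivity of $\wBS$ as the crux, and neither of your two routes fully closes it on its own: the closed-range step borrows the equivalence $\|\BGvf\|_{-1/2}\approx\|\wBS[\BGvf]\|_{1/2}$, which the paper itself derived \emph{from} bijectivity, so this is circular if read as an independent proof; and the decomposition argument merely reduces surjectivity of $\wBS$ to surjectivity of $\BS:\Hcal^*_\Psi\to\Hcal_W$, which is again an infinite-dimensional claim. The clean way to finish is to observe that $\BS$ (hence $\wBS$, a finite-rank perturbation) is Fredholm of index zero as a pseudodifferential operator of order $-1$ on a Lipschitz boundary (cf.\ \cite{CC-JMAA-07,Mitrea-JFAA-99}), so that injectivity, which you correctly deduce from positive-definiteness, already forces surjectivity with bounded inverse. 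This is the same gap the paper glosses over with the phrase ``positive-definiteness of $-\wBS$ implies that $\wBS$ is bijective,'' so you are in good company, but an independent proof should cite the Fredholm property explicitly. Once bijectivity is secured, your pullback of the inner product, the chain of norm equivalences, and the one-line isometry computation are all exactly right.
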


As shown in \cite{KPS-ARMA-07}, the NP operator $\BK^*$ can be realized as a self adjoint operator on $\Hcal^*$ using Plemelj's symmetrization principle which states for the Lam\'e system:
\beq\label{Plemelj}
\BS\BK^* = \BK \BS .
\eeq
This relation is a consequence of the Green's formula. In fact, if $\Lcal_{\Gl, \Gm} \Bu = 0$ in $\GO$, then we have for $\Bx\in \Rbb^d \setminus \ol{\GO}$
\begin{align*}
\BS \left[ \p_\nu \Bu |_{-}\right] (\Bx) - \BD [\Bu|_{-}](\Bx) = 0.
\end{align*}
Substituting $\Bu(\Bx) = \BS [\BGvf](\Bx)$ for some $\BGvf \in \Hcal^*$ into the above relation yields
$$
\BS \left(-\frac{1}{2}I + \BK^* \right) [\BGvf] (\Bx) = \BD\BS[\BGvf](\Bx),  \quad \Bx \in \Rbb^d \setminus \ol{\GO}.
$$
Letting $\Bx$ approach to $\p \GO$, we have from \eqnref{j-double}
$$
\BS \left(-\frac{1}{2}I + \BK^*\right) [\BGvf] (\Bx) = \left(-\frac{1}{2}I + \BK \right)\BS[\BGvf](\Bx), \quad \Bx \in \p \GO.
$$
So we have \eqnref{Plemelj}.

The relation \eqnref{Plemelj} holds with $\BS$ replaced by $\wBS$, namely,
\beq\label{Plemelj2}
\wBS\BK^* = \BK \wBS .
\eeq
In fact, if $\BGvf \in W$, then $\BK^*[\BGvf] = 1/2 \BGvf$ and $\wBS[\BGvf] \in \Psi$. So, we have
$$
\wBS\BK^*[\BGvf] = \BK \wBS[\BGvf].
$$
This proves \eqnref{Plemelj2}.

\begin{prop}
The NP operators $\BK^*$ and $\BK$ are self-adjoint with respect to $( \cdot, \cdot )_*$ and $( \cdot, \cdot )$, respectively.
\end{prop}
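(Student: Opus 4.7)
The plan is to reduce both self-adjointness statements to the Plemelj identity $\wBS\BK^* = \BK\wBS$ (equation \eqnref{Plemelj2}) together with the $L^2$/duality pairing relation $\la \BK^*[\BGvf], \Bf \ra = \la \BGvf, \BK[\Bf] \ra$, which is just the fact that $\BK^*$ is the adjoint of $\BK$ under the bilinear pairing of $\Hcal^*$ with $\Hcal$.

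The one auxiliary fact I would establish first is the \emph{symmetry of $\wBS$ under the duality pairing}, namely $\la \BGvf, \wBS[\Bpsi] \ra = \la \Bpsi, \wBS[\BGvf] \ra$ for all $\BGvf,\Bpsi \in \Hcal^*$. In three dimensions $\wBS=\BS$ and this follows by Fubini from the symmetry of the Kelvin matrix, $\BGG(\Bx-\By)^T = \BGG(\By-\Bx) = \BGG(\Bx-\By)$. In two dimensions I would decompose $\BGvf = \BGvf' + \BGvf''$ and $\Bpsi = \Bpsi' + \Bpsi''$ via \eqnref{decomHstar}; expanding the definition of $\wBS$, the cross terms $\la \BGvf'', \BS[\Bpsi'] \ra$ and $\la \Bpsi'', \BS[\BGvf'] \ra$ vanish because $\BS[\BGvf^{(j)}] \in \Psi$ by Lemma \ref{decomplemma}(iii) while $\BGvf', \Bpsi' \in \Hcal^*_\Psi$, leaving only $\la \BGvf', \BS[\Bpsi'] \ra + \sum_j \la \BGvf, \Bf^{(j)}\ra \la \Bpsi, \Bf^{(j)}\ra$, which is manifestly symmetric in $\BGvf,\Bpsi$.

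Given this, self-adjointness of $\BK^*$ on $(\Hcal^*, (\cdot,\cdot)_*)$ drops out by stringing four identities together:
\begin{equation*}
(\BK^*[\BGvf], \Bpsi)_* = -\la \BK^*[\BGvf], \wBS[\Bpsi] \ra = -\la \BGvf, \BK \wBS[\Bpsi] \ra = -\la \BGvf, \wBS \BK^*[\Bpsi] \ra = (\BGvf, \BK^*[\Bpsi])_*,
\end{equation*}
using in order the definition of $(\cdot,\cdot)_*$, duality adjointness, \eqnref{Plemelj2}, and the definition again. For $\BK$ on $(\Hcal, (\cdot,\cdot))$, I would invert \eqnref{Plemelj2} to $\wBS^{-1}\BK = \BK^*\wBS^{-1}$ (valid by the bijectivity of $\wBS$ from Proposition \ref{isometry}) and then compute
\begin{equation*}
(\BK[\Bf], \Bg) = -\la \wBS^{-1}\BK[\Bf], \Bg \ra = -\la \BK^*\wBS^{-1}[\Bf], \Bg \ra = -\la \wBS^{-1}[\Bf], \BK[\Bg] \ra = (\Bf, \BK[\Bg]).
\end{equation*}

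The only non-routine point is the two-dimensional symmetry check for $\wBS$; everything else is purely formal algebra. I note that Plemelj \eqnref{Plemelj2} has already been verified on both $\Hcal^*_\Psi$ (via \eqnref{Plemelj}, since $\wBS=\BS$ on that subspace) and on $W$ in the paragraph just before the proposition, so no further work on that identity is required.
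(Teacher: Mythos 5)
Your proof is correct and, in its core, it is the paper's proof: the four-step chain using the definition of $(\cdot,\cdot)_*$, duality adjointness $\la \BK^*[\BGvf], \Bf\ra = \la\BGvf, \BK[\Bf]\ra$, and the Plemelj relation $\wBS\BK^*=\BK\wBS$ is exactly what the paper writes (the paper's proof text uses $\BS$ rather than $\wBS$, but cites \eqnref{Plemelj2}, so $\wBS$ is meant; yours is the cleaner rendering). One small remark: the auxiliary symmetry $\la\BGvf,\wBS[\Bpsi]\ra=\la\Bpsi,\wBS[\BGvf]\ra$ that you establish is not actually invoked in your four-step chain for either $\BK^*$ or $\BK$, so it is not needed for the self-adjointness identity itself; what it does do is close a gap the paper left implicit, namely that $(\cdot,\cdot)_*$ is symmetric and hence a genuine inner product as asserted in the earlier proposition. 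Your 2D argument for that symmetry (decompose, observe $\BS[\BGvf^{(j)}]\in\Psi$ kills the cross terms against $\Hcal^*_\Psi$, and $\BS$ itself is symmetric by Fubini and evenness of the Kelvin matrix) is sound.
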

\begin{proof}
According to \eqnref{Plemelj2}, we have
\begin{align*}
( \BGvf, \BK^* [\Bpsi] )_* & = - \la \BGvf, \BS \BK^* [\Bpsi] \ra = - \la \BGvf, \BK \BS [\Bpsi] \ra \\
& = - \la \BK^* [\BGvf], \BS [\Bpsi] \ra  = ( \BK^* [\BGvf], \Bpsi)_*.
\end{align*}
So $\BK^*$ is self-adjoint. That $\BK$ is self-adjoint can be proved similarly.
\end{proof}

\section{Spectrum of NP operators on smooth planar domains}

In this section we prove \eqnref{statcompact} when $\p\GO$ is $C^{1, \Ga}$ for some $\Ga>0$. For that purpose we look into $\BK$ in more explicit form.
The definition \eqnref{kerdef} and straightforward computations show that
\beq
\p_{\nu_\By}\BGG(\Bx-\By)= -\frac{\mu}{2\mu+\Gl} \BK_1(\Bx,\By) + \BK_2(\Bx,\By),
\eeq
where
\begin{align}
\BK_1(\Bx,\By) &= \frac{\Bn_\By (\Bx-\By)^T - (\Bx-\By) \Bn_\By^T}{\Go_d |\Bx-\By|^{d}} ,  \\
\BK_2(\Bx,\By) &= \frac{\mu}{2\mu+\Gl} \frac{( \Bx-\By) \cdot \Bn_\By }{\Go_d |\Bx-\By|^d} \BI + \frac{2(\mu+ \Gl)}{2\mu+\Gl} \frac{( \Bx-\By) \cdot \Bn_\By  }{\Go_d |\Bx-\By|^{d+2}} (\Bx-\By)(\Bx-\By)^T ,
\end{align}
where $\Go_d$ is $2\pi$ if $d=2$ and $4\pi$ if $d=3$, and $\BI$ is the $d \times d$ identity matrix. Let
\beq
\BT_j [\BGvf](\Bx):= \text{p.v.} \int_{\p\GO} \BK_j(\Bx,\By) \BGvf(\By) \, d \Gs(\By), \quad \Bx \in \p\GO, \ j=1,2,
\eeq
so that
\beq\label{Kdecompo}
\BK = - \frac{\mu}{2\mu+\Gl} \BT_1 + \BT_2.
\eeq
Note that each term of $\BK_2$ has the term $( \Bx-\By) \cdot \Bn_\By$. Since $\p\GO$ is $C^{1, \Ga}$, we have
$$
|( \Bx-\By) \cdot \Bn_\By | \le C |\Bx-\By|^{1+\Ga}
$$
for some constant $C$ because of orthogonality of $\Bx-\By$ and $\Bn_\By$. So we have
$$
|\BK_2(\Bx,\By)| \le C |\Bx-\By|^{-d+1+\Ga}.
$$
So $\BT_2$ is compact on $\Hcal$ (see, for example, \cite{Folland-book}), and $\BT_1$ is responsible for non-compactness of $\BK$.

\subsection{Compactness of $\BK^2 - k_0^2 I$ and spectrum}

\begin{prop}\label{prop:compact}
Let $\GO$ be a bounded $C^{1, \Ga}$ domain in $\Rbb^2$ for some $\Ga >0$. Then $\BK^2 - k_0^2 I$ is compact on $\Hcal$.
\end{prop}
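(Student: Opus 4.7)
The plan is to work modulo compact operators on $\Hcal$, starting from the decomposition \eqnref{Kdecompo} established just above. Since $\p\GO$ is $C^{1,\Ga}$, the discussion preceding the proposition shows that $\BT_2$ is compact on $\Hcal$; noting that $\mu/(2\mu+\Gl)=2k_0$, this immediately gives
\[
\BK^2 \equiv 4k_0^2\,\BT_1^2 \pmod{\mathrm{compact}},
\]
so the proposition reduces to showing that $\BT_1^2 = \tfrac14 I + \mathrm{compact}$ on $\Hcal$.

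The next step is to exploit the antisymmetric structure of $\BT_1$ in two dimensions. A direct computation in components gives
\[
\Bn_\By (\Bx-\By)^T - (\Bx-\By)\Bn_\By^T = \big(\Gt_\By\cdot(\Bx-\By)\big)\,J, \qquad J := \begin{pmatrix} 0 & 1 \\ -1 & 0 \end{pmatrix},
\]
where $\Gt_\By$ is the unit tangent with $\Gt_\By = (-n_2,n_1)$. The matrix kernel of $\BT_1$ therefore factors as a scalar kernel multiplied by the constant matrix $J$, so
\[
\BT_1 = J\,H, \qquad H[\BGvf](\Bx) := \mbox{p.v.}\int_{\p\GO}\frac{\Gt_\By\cdot(\Bx-\By)}{2\pi\,|\Bx-\By|^2}\,\BGvf(\By)\,d\Gs(\By),
\]
with $H$ acting componentwise. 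Since $J$ commutes with $H$ and $J^2=-I$, we get $\BT_1^2 = -H^2$, reducing the task to verifying that $H^2 + \tfrac14 I$ is compact on the scalar space $H^{1/2}(\p\GO)$.

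To handle this, identify $\Rbb^2\cong\Cbb$ and parametrize $\p\GO$ by arclength, so that $dy = \Gt_\By\,d\Gs(\By)$ as complex numbers. A short calculation using $\Gt_\By\,\overline{(y-x)} = \Gt_\By\cdot(\By-\Bx) + i\,\Bn_\By\cdot(\By-\Bx)$ shows that the Cauchy singular integral $\Ccal f(x) := \tfrac{1}{\pi i}\,\mbox{p.v.}\int_{\p\GO}\frac{f(y)}{y-x}\,dy$ decomposes as
\[
\Ccal = 2\Kcal_L - 2i\,B, \qquad B = -H,
\]
where $\Kcal_L$ is the classical Neumann--Poincar\'e operator for the Laplacian on $\p\GO$ and $B$ is the scalar integral operator with kernel $\Gt_\By\cdot(\By-\Bx)/(2\pi|\By-\Bx|^2)$. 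The classical Plemelj--Calder\'on identity on $C^{1,\Ga}$ curves gives $\Ccal^2 = I + \mathrm{compact}$; taking real parts yields $4\Kcal_L^2 - 4B^2 = I + \mathrm{compact}$. Because $\Kcal_L$ is itself compact on $C^{1,\Ga}$ boundaries (its kernel is weakly singular of order $|\Bx-\By|^{-1+\Ga}$), so is $\Kcal_L^2$, and we conclude $B^2 = -\tfrac14 I + \mathrm{compact}$, i.e.\ $H^2 + \tfrac14 I$ is compact, as required.

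The only non-routine point I anticipate is that the two classical inputs ($\Ccal^2 - I$ compact and $\Kcal_L$ compact) are usually stated on $L^2(\p\GO)$ whereas the proposition is formulated on $\Hcal = H^{1/2}(\p\GO)^2$. On a $C^{1,\Ga}$ boundary the corresponding remainder kernels are weakly singular of order $|\Bx-\By|^{-1+\Ga}$, so the remainder operators are bounded from $H^{-1/2}$ into a slightly better Sobolev space and hence compact on $H^{1/2}(\p\GO)$; alternatively one interpolates between the $L^2$ statement and its $H^1$ counterpart. This transfer of regularity is where the bulk of the technical bookkeeping lies, but the underlying algebraic reduction is the content above.
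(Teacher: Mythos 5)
Your proof is correct, and it takes a genuinely different route from the paper's. You both reduce to showing $\BT_1^2-\tfrac14 I$ is compact and both factor $\BT_1$ through the antisymmetric structure in 2D (your $\BT_1=JH$ is the matrix form of the paper's observation that $\BT_1$ acts as the scalar operator $\Rcal$ rotated through $90^\circ$, so $\BT_1^2=-\Rcal^2$). The difference is in how the scalar identity $\Rcal^2 + \tfrac14 I = \text{compact}$ is obtained. The paper passes the density $\Gvf$ to its harmonic extension $u$, uses Stokes' theorem to turn $\Rcal[\Gvf]$ into a double-layer potential of the harmonic conjugate boundary trace $\psi=\Tcal[\Gvf]$, and then reads off $\Rcal=(-\tfrac12 I +\Kcal)\Tcal$ with $\Tcal^2=-I$ and $\Kcal$ compact. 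You instead recognize $\Rcal$ as (minus) the imaginary part of the Cauchy singular integral operator $\Ccal$ on the $C^{1,\Ga}$ curve, split $\Ccal=\pm 2\Kcal\mp 2i\Rcal$ into its real and imaginary parts, and invoke the Plemelj--Calder\'on identity $\Ccal^2=I$ together with compactness of the Laplace NP operator $\Kcal$ to extract $\Rcal^2=\Kcal^2-\tfrac14 I$. The two routes are of course two faces of the same Plemelj formula (the paper's $\Tcal$ and your $\Ccal$ differ essentially by a factor of $i$), but yours is a bit more algebraic and avoids the Stokes/harmonic-conjugate construction altogether; the paper's is more self-contained in that it never needs to leave $H^{1/2}$ or cite the Cauchy-integral algebra as a black box. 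Your closing worry about transferring $\Ccal^2=I$ and compactness of $\Kcal$ from $L^2$ to $H^{1/2}$ is appropriate but not a serious obstacle: on $C^{1,\Ga}$ curves $\Ccal^2=I$ is an exact algebraic identity holding on every space where $\Ccal$ is bounded (and $\Ccal$ is bounded on $H^{1/2}$ there), while the kernel of $\Kcal$ is weakly singular of order $-1+\Ga$, which gives compactness on $H^{1/2}$ by the same argument the paper uses for $\BT_2$. (One small bookkeeping note: the relative sign between the $\Kcal$ and $\Rcal$ terms in your decomposition of $\Ccal$ depends on the sign convention for the NP operator, but this disappears upon squaring, so it does not affect the conclusion.)
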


\begin{proof}
In view of \eqnref{Kdecompo}, it suffices to show that $\BT_1^2 - \frac{1}{4} I$ is compact.

In two dimensions we have
$$
\BK_1(\Bx,\By) = \frac{1}{2\pi |\Bx-\By|^2}
\begin{bmatrix}
0 & K(\Bx,\By) \\
- K(\Bx,\By) & 0
\end{bmatrix},
$$
where
$$
K(\Bx,\By):= - n_2(\By) (x_1-y_1) + n_1(\By) (x_2-y_2).
$$
Let
\beq
\Rcal[\Gvf](\Bx) = \frac{1}{2\pi} \text{p.v.} \int_{\p \GO} \frac{K(\Bx-\By)}{|\Bx-\By|^2} \Gvf(\By) \, d\Gs(\By).
\eeq
Then we have
\beq
\BT_1[\BGvf] = \begin{bmatrix}
       -\Rcal [\Gvf_2]   \\
       \Rcal[\Gvf_1]
     \end{bmatrix} .
\eeq

For $\Bx \in \p\GO$, set $\GO_\Ge := \GO \setminus B_\Ge(\Bx)$ where $B_\Ge(\Bx)$ is the disk of radius $\Ge$ centered at $\Bx$. For $\Gvf \in H^{1/2}(\p\GO)$, let $u$ be the solution to $\GD u=0$ in $\GO$ with $u=\Gvf$ on $\p\GO$. Since
$$
\text{rot} \frac{\Bx-\By}{|\Bx-\By|^2}=0, \quad \Bx \neq \By,
$$
we obtain from Stokes' formula
\begin{align*}
\Rcal[\Gvf](\Bx) & = \lim_{\Ge \to 0} \frac{1}{2\pi} \int_{\p \GO_\Ge} \frac{-(x_1-y_1)n_2(\By) + (x_2 -y_2) n_1(\By)}{|\Bx- \By|^2} \Gvf(\By) \, d\Gs(\By) \\
& = \lim_{\Ge \to 0} \frac{1}{2\pi} \int_{\GO_\Ge} \frac{-(x_1-y_1) \p_2 u(\By) + (x_2 -y_2) \p_1 u(\By)}{|\Bx- \By|^2} \, d\By.
\end{align*}
Let $v$ be a harmonic conjugate of $u$ in $\GO$ and $\psi:= v|_{\p\GO}$ so that
\beq
\psi= \Tcal[\Gvf] ,
\eeq
where $\Tcal$ is the Hilbert transformation on $\p\GO$. Then we have from divergence theorem
\begin{align*}
\Rcal[\Gvf](\Bx) &= \lim_{\Ge \to 0} \frac{1}{2\pi} \int_{\GO_\Ge} \frac{(x_1-y_1) \p_1 v(\By) + (x_2 -y_2) \p_2 v(\By)}{|\Bx- \By|^2} \, d\By \\
&= \lim_{\Ge \to 0} \frac{1}{2\pi} \int_{\p \GO_\Ge} \frac{( \Bx-\By) \cdot \Bn_\By}{|\Bx-\By|^2}  \psi(\By) \, d\Gs(\By).
\end{align*}
Observe that
$$
\frac{1}{2\pi} \int_{\p \GO_\Ge} \frac{( \Bx-\By) \cdot \Bn_\By}{|\Bx-\By|^2}  \psi(\By) \, d\Gs(\By)
$$
is the electro-static double layer potential of $\Gy$, and $\Bx \notin \GO_\Ge$. So by the jump formula of the the double layer potential (see \cite{Folland-book}), we have
$$
\lim_{\Ge \to 0} \frac{1}{2\pi} \int_{\p \GO_\Ge} \frac{( \Bx-\By) \cdot \Bn_\By}{|\Bx-\By|^2}  \psi(\By) \, d\Gs(\By) = -\frac{1}{2} \psi(\Bx) + \Kcal[\psi](\Bx),
$$
where
\beq
\Kcal[\psi](\Bx):= \frac{1}{2\pi} \int_{\p \GO} \frac{( \Bx-\By) \cdot \Bn_\By}{|\Bx-\By|^2}  \psi(\By) \, d\Gs(\By), \quad \Bx \in \p\GO.
\eeq
It is worth to mention that $\Kcal$ is the electro-static NP operator.

So far we have shown that
\beq
\Rcal[\Gvf] = -\frac{1}{2} \Tcal[\Gvf] +  \Kcal \Tcal[\Gvf] .
\eeq
Since $\Tcal$ is bounded and $\Kcal$ is compact on $H^{1/2}(\p\GO)$, we have
\beq
\Rcal = -\frac{1}{2} \Tcal  + \text{compact operator}.
\eeq
Since $\Tcal^2 = -I$, we infer that $\Rcal^2 + \frac{1}{4}I$ is compact, and so is $\BT_1^2 - \frac{1}{4}I$. This completes the proof.
\end{proof}

Since $\BK^2 - k_0^2 I$ is compact and self-adjoint, it has eigenvalues converging to $0$. The proof of Proposition \ref{prop:compact} shows that neither $\BK - k_0 I$ nor $\BK + k_0 I$ is compact, so we obtain the following theorem.

\begin{theorem}\label{thmone}
Let $\GO$ be a bounded domain in $\Rbb^2$ with $C^{1,\Ga}$ boundary for some $\Ga >0$.
\begin{itemize}
\item[(i)] The spectrum of $\BK$ on $\Hcal$ consists of eigenvalues accumulating at $k_0$ and $-k_0$, and their multiplicities are finite if they are not equal to $k_0$ or $-k_0$.
\item[(ii)] The spectrum of $\BK^*$ on $\Hcal^*$ is the same as that of $\BK$ on $\Hcal$.
\item[(iii)] The set of linearly independent eigenfunctions of $\BK$ makes a complete orthogonal system of $\Hcal$.
\item[(iv)] $\BGvf$ is an eigenfunction of $\BK^*$ on $\Hcal^*$ if and only if $\wBS[\BGvf]$ is an eigenfunction of $\BK$ on $\Hcal$.
\end{itemize}
\end{theorem}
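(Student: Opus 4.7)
The proof rests on two earlier results: the intertwining $\wBS\BK^* = \BK\wBS$ of \eqnref{Plemelj2}, in which $\wBS$ is an isometry $\Hcal^*\to\Hcal$ between the inner products $(\cdot,\cdot)_*$ and $(\cdot,\cdot)$ (Proposition \ref{isometry}); and the compactness, together with self-adjointness, of $\BK^2 - k_0^2 I$ on $\Hcal$ supplied by Proposition \ref{prop:compact}.

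I would handle (iv) and (ii) first. For (iv), if $\BK^*[\BGvf] = \mu\BGvf$, applying $\wBS$ and using \eqnref{Plemelj2} yields $\BK\wBS[\BGvf] = \mu\wBS[\BGvf]$, with $\wBS[\BGvf] \neq 0$ by injectivity of $\wBS$; conversely each $\Bf\in\Hcal$ has a unique preimage $\BGvf = \wBS^{-1}[\Bf]$, and the same computation run in reverse gives $\BK^*[\BGvf] = \mu\BGvf$. Part (ii) is then immediate, because $\BK = \wBS\BK^*\wBS^{-1}$ realizes $\BK$ and $\BK^*$ as unitarily equivalent via the isometry $\wBS$, so $\sigma(\BK)=\sigma(\BK^*)$.

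For (i) and (iii) I would apply the Hilbert--Schmidt spectral theorem to the compact self-adjoint operator $\BK^2 - k_0^2 I$ on $(\Hcal,(\cdot,\cdot))$. This produces an orthogonal decomposition $\Hcal = V_{k_0^2}\oplus\bigoplus_n V_{\beta_n}$, where $V_{\beta_n}=\ker(\BK^2 - \beta_n I)$ is the finite-dimensional eigenspace of $\BK^2$ for the eigenvalue $\beta_n\neq k_0^2$, the $\beta_n$ can accumulate only at $k_0^2$, and $V_{k_0^2}=\ker(\BK^2-k_0^2 I)$ is closed but possibly infinite-dimensional. Lemma 2.1(ii) gives $\sigma(\BK)\subset(-1/2,1/2]$, so each $\beta_n\in[0,1/4]$ and $\sqrt{\beta_n}$ is real. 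Each summand is $\BK$-invariant because $\BK$ commutes with $\BK^2$, and on each summand $\BK$ is self-adjoint and squares to $\beta_n I$ (respectively $k_0^2 I$). The elementary fact that a self-adjoint $T$ satisfying $T^2=cI$ with $c\geq 0$ splits its ambient space orthogonally into the $(\pm\sqrt{c})$-eigenspaces of $T$ then decomposes each $V_{\beta_n}$ (respectively $V_{k_0^2}$) into the $(\pm\sqrt{\beta_n})$-eigenspaces (respectively the $(\pm k_0)$-eigenspaces) of $\BK$. Concatenating orthonormal bases of these pieces yields the complete orthogonal system in (iii), and the list of eigenvalues $\pm\sqrt{\beta_n}\to\pm k_0$, each of finite multiplicity since $\dim V_{\beta_n}<\infty$, possibly joined by $\pm k_0$ themselves coming from $V_{k_0^2}$, proves (i).

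I do not anticipate a genuine obstacle; the substance is already in Proposition \ref{prop:compact}, and the rest is bookkeeping. The one point worth spelling out is that the orthogonal decomposition of $\Hcal$ into $\BK$-eigenspaces rules out continuous or residual spectrum of $\BK$ outside $\{\pm k_0\}$, so the pure-point conclusion in (i) is genuine rather than merely an essential-spectrum statement.
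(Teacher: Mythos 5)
Your approach matches the paper's: both rest on the compactness and self-adjointness of $\BK^2 - k_0^2 I$ and the unitary equivalence $\BK = \wBS\BK^*\wBS^{-1}$, and your spectral-theorem bookkeeping for (iii) and the easy direction of (i) is sound.

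There is, however, one real gap. Your argument shows that every eigenvalue of $\BK$ other than $\pm k_0$ has finite multiplicity and that the only possible accumulation points of eigenvalues lie in $\{k_0, -k_0\}$, but it does not show that \emph{both} $k_0$ and $-k_0$ actually belong to the spectrum. Writing the decomposition of each $V_{\beta_n}$ as $\ker(\BK - \sqrt{\beta_n}I) \oplus \ker(\BK + \sqrt{\beta_n}I)$ does not by itself guarantee that both summands are non-zero, nor even that there are infinitely many distinct $\beta_n$; a priori the eigenvalues could accumulate only at $k_0$, or only at $-k_0$, or at neither (with only one of $\pm k_0$ an eigenvalue of infinite multiplicity). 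The paper closes this by observing that the proof of Proposition \ref{prop:compact} exhibits $\BK$, modulo compacts, as $k_0$ times the matrix
$\begin{bmatrix} 0 & -\Tcal \\ \Tcal & 0 \end{bmatrix}$,
which is a self-adjoint involution with both eigenspaces infinite-dimensional; consequently neither $\BK - k_0 I$ nor $\BK + k_0 I$ is compact. For a self-adjoint operator whose essential spectrum is contained in $\{k_0,-k_0\}$, this forces both $k_0$ and $-k_0$ to lie in the essential spectrum. You should insert this observation before concluding (i); the rest of the proof can stand as written.
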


\subsection{Spectral expansion of the fundamental solution}

Let $\{\Bpsi_j \}$ be a complete orthonormal (with respect to the inner product $( \cdot , \cdot )_*$) system of $\Hcal^*_\Psi$ consisting of eigenfunctions of $\BK^*$ on $\p\GO$ in two dimensions. Then they, together with $\BGvf^{(j)}$, $j=1,2,3$, defined in section \ref{subsec:NP}, make an orthonormal system of $\Hcal^*$. Then by Theorem \ref{thmone} (iv) $\{ \BS[\Bpsi_j] \}$ together with $\Bf^{(i)}$ is a complete orthonormal system of $\Hcal$ with respect to the inner product $( \cdot , \cdot )$.

Let $\BGG(\Bx-\By)$ be the Kelvin matrix defined in \eqnref{Kelvin}. If $\Bx \in \Rbb^2 \setminus \overline{\GO}$ and $\By \in \p\GO$, then there are (column) vector-valued functions $\Ba_j$ and $\Bb_i$ such that
\beq
\BGG(\Bx-\By) = \sum_{j=1}^\infty \Ba_j(\Bx) \BS [\Bpsi_j](\By)^T + \sum_{i=1}^3 \Bb_i(\Bx) \Bf^{(i)}(\By)^T.
\eeq
It then follows that
\begin{align*}
\int_{\p\GO} \BGG(\Bx-\By) \Bpsi_l(\By) \, d\Gs(\By) & = \sum_{j=1}^\infty \Ba_j(\Bx) \la \Bpsi_l , \BS [\Bpsi_j] \ra + \sum_{i=1}^3 \Bb_i(\Bx) \la \Bpsi_l , \Bf^{(i)} \ra \\
& = -\sum_{j=1}^\infty \Ba_j(\Bx) ( \Bpsi_j, \Bpsi_l )_* + \sum_{i=1}^3 \Bb_i(\Bx) ( \BGvf^{(i)}, \Bpsi_l )_* = -\Ba_l(\Bx).
\end{align*}
In other words, we obtain $\Ba_l(\Bx) = -\BS [\Bpsi_l](\Bx)$. Likewise one can show $\Bb_i(\Bx)= \wBS [\BGvf^{(i)}](\Bx)$.
So, we obtain
$$
\BGG(\Bx-\By) = -\sum_{j=1}^\infty \BS [\Bpsi_j](\Bx) \BS [\Bpsi_j](\By)^T + \sum_{i=1}^3 \wBS [\BGvf^{(i)}](\Bx) \Bf^{(i)}(\By)^T, \quad \Bx \in \Rbb^2 \setminus \overline{\GO}, \ \By \in \p\GO.
$$
Since both sides of above are solutions of the Lam\'e equation in $\By$ for a fixed $\Bx$, we obtain the following theorem from the uniqueness of the solution to the Dirichlet boundary value problem.

\begin{theorem}[expansion in 2D]\label{thm:add2D}
Let $\GO$ be a bounded domain in $\Rbb^2$ with $C^{1,\Ga}$ boundary for some $\Ga >0$ and let $\{\Bpsi_j \}$ be a complete orthonormal system of $\Hcal^*_\Psi$ consisting of eigenfunctions of $\BK^*$.
Let $\BGG(\Bx-\By)$ be the Kelvin matrix of the fundamental solution to the Lam\'e system. It holds that
\beq
\BGG(\Bx-\By) = - \sum_{j=1}^\infty \BS [\Bpsi_j](\Bx) \BS [\Bpsi_j](\By)^T + \sum_{i=1}^3 \wBS [\BGvf^{(i)}](\Bx) \Bf^{(i)}(\By)^T, \quad \Bx \in \Rbb^2 \setminus \overline{\GO}, \ \By \in \GO.
\eeq
\end{theorem}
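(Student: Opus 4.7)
My plan is to rigorize the calculation sketched in the paragraph immediately preceding the theorem, expanding $\BGG(\Bx-\By)$ as a function of $\By\in\p\GO$ in the orthonormal basis produced by the spectral theory of $\BK^*$, and then push the resulting identity from $\p\GO$ into $\GO$ by uniqueness of the elasto-static Dirichlet problem.

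First I would assemble the basis. By Theorem~\ref{thmone}(iii)--(iv) together with Lemma~\ref{decomplemma}(iv), the family $\{\Bpsi_j\}_{j\ge 1}\cup\{\BGvf^{(i)}\}_{i=1}^{3}$ is a complete orthonormal basis of $(\Hcal^*,(\cdot,\cdot)_*)$, with the $\Bpsi_j$ spanning $\Hcal^*_\Psi$ and the $\BGvf^{(i)}$ spanning the $1/2$-eigenspace $W$. Proposition~\ref{isometry} transports this, via the isometry $\wBS:\Hcal^*\to\Hcal$, into a complete orthonormal basis $\{\BS[\Bpsi_j]\}_{j\ge 1}\cup\{\Bf^{(i)}\}_{i=1}^{3}$ of $(\Hcal,(\cdot,\cdot))$, using $\wBS[\Bpsi_j]=\BS[\Bpsi_j]$ on $\Hcal^*_\Psi$ and $\wBS[\BGvf^{(i)}]=\Bf^{(i)}$.

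Fix $\Bx\in\Rbb^2\setminus\overline{\GO}$. Each column of $\By\mapsto\BGG(\Bx-\By)$ is smooth on $\p\GO$, hence belongs to $\Hcal$, and admits an expansion of the form
\[
\BGG(\Bx-\By) \;=\; \sum_{j=1}^\infty \Ba_j(\Bx)\,\BS[\Bpsi_j](\By)^T + \sum_{i=1}^{3} \Bb_i(\Bx)\,\Bf^{(i)}(\By)^T, \qquad \By\in\p\GO,
\]
for some vector-valued coefficients $\Ba_j(\Bx)$, $\Bb_i(\Bx)$, with convergence in $\Hcal$ column by column. Pairing both sides against $\Bpsi_l$ via $\la\cdot,\cdot\ra$, the left-hand side yields $\BS[\Bpsi_l](\Bx)$ by definition, while on the right $\la\Bpsi_l,\BS[\Bpsi_j]\ra=-(\Bpsi_j,\Bpsi_l)_*=-\Gd_{jl}$ and $\la\Bpsi_l,\Bf^{(i)}\ra=0$ (because $\Bpsi_l\in\Hcal^*_\Psi$ and $\Bf^{(i)}\in\Psi$), forcing $\Ba_l(\Bx)=-\BS[\Bpsi_l](\Bx)$. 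A parallel test against $\BGvf^{(k)}$, using orthogonality of the two eigenspaces of $\BK^*$ in $(\cdot,\cdot)_*$ and the duality $\la\BGvf^{(k)},\Bf^{(i)}\ra=\Gd_{ki}$ of \eqnref{Gvfpsi}, delivers $\Bb_k(\Bx)=\wBS[\BGvf^{(k)}](\Bx)$.

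Finally I would extend the identity from $\p\GO$ into $\GO$. For fixed $\Bx$ outside $\overline\GO$, each entry of $\BGG(\Bx-\By)$ is Lam\'e-harmonic in $\By\in\GO$ because the singularity sits outside $\overline\GO$. On the right-hand side, each $\BS[\Bpsi_j](\cdot)$ is a single-layer potential and hence Lam\'e-harmonic in $\GO$, and each $\Bf^{(i)}$ extends canonically to a global rigid motion, which is Lam\'e-harmonic as well. Since the spectral series converges in $H^{1/2}(\p\GO)^2$ and the Dirichlet solution operator for $\Lcal_{\Gl,\Gm}$ maps $H^{1/2}(\p\GO)^2$ continuously into $H^1(\GO)^2$, together with interior elliptic regularity the corresponding Lam\'e-harmonic extensions converge locally uniformly on compact subsets of $\GO$, so the limit is Lam\'e-harmonic in $\GO$ with the correct boundary trace. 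Uniqueness of the Dirichlet problem for $\Lcal_{\Gl,\Gm}$ then yields equality throughout $\GO$.

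The main obstacle I anticipate is precisely this last convergence step: the orthonormal expansion a priori converges only in the $H^{1/2}$ norm on $\p\GO$, and some care is required to upgrade this to convergence of the associated Lam\'e-harmonic extensions inside $\GO$, so that the limiting series on the right is genuinely a Lam\'e-harmonic function to which the Dirichlet uniqueness theorem can be applied. Everything else reduces to algebraic bookkeeping with the orthogonality relations already packaged in Section~\ref{subsec:NP} and Theorem~\ref{thmone}.
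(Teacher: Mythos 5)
Your proposal mirrors the paper's proof step by step: you expand $\By\mapsto\BGG(\Bx-\By)$ on $\p\GO$ in the orthonormal basis $\{\BS[\Bpsi_j]\}\cup\{\Bf^{(i)}\}$ of $(\Hcal,(\cdot,\cdot))$, determine the coefficients by pairing against $\Bpsi_l$ and $\BGvf^{(k)}$ exactly as the paper does, and then invoke uniqueness of the Lam\'e Dirichlet problem to extend the boundary identity into $\GO$. The convergence caveat you flag at the end (upgrading $H^{1/2}(\p\GO)$ convergence of the series to interior convergence of the Lam\'e-harmonic extensions) is a reasonable point that the paper passes over silently, but it is an addendum rather than a departure in method.
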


In three dimensions one can prove the following theorem similarly. We emphasize that it has not been proved that the NP operator on smooth domain has a discrete spectrum.

\begin{theorem}[expansion in 3D]\label{thm:add3D}
Let $\GO$ be a bounded domain in $\Rbb^3$. Suppose that the NP operator $\BK^*$ admits eigenfunctions $\{\Bpsi_j \}$ which is a complete orthonormal system of $\Hcal^*$. It holds that
\beq
\BGG(\Bx-\By) = - \sum_{j=1}^\infty \BS [\Bpsi_j](\Bx) \BS [\Bpsi_j](\By)^T, \quad \Bx \in \Rbb^3 \setminus \overline{\GO}, \ \By \in \GO.
\eeq
\end{theorem}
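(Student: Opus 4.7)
The plan is to run the same argument used for Theorem \ref{thm:add2D}, with the simplification that in three dimensions there is no need for the auxiliary space $W$ or the modified operator $\wBS \neq \BS$. Indeed, since $\BS[\BGvf](\Bx) = O(|\Bx|^{-1})$ at infinity for every $\BGvf \in \Hcal^*$, the operator $-\BS$ is positive-definite on all of $\Hcal^*$, and $\wBS = \BS$. By Proposition \ref{isometry} (applied in the three-dimensional setting), $\BS$ is an isometric isomorphism from $(\Hcal^*,(\cdot,\cdot)_*)$ onto $(\Hcal,(\cdot,\cdot))$. Consequently, under the hypothesis that $\{\Bpsi_j\}$ is a complete orthonormal system of $\Hcal^*$, the images $\{\BS[\Bpsi_j]\}$ form a complete orthonormal system of $\Hcal$.

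Fix $\Bx \in \Rbb^3 \setminus \overline{\GO}$. Each column of $\By \mapsto \BGG(\Bx-\By)$ is smooth on $\p\GO$ and therefore lies in $\Hcal$, so each column admits an expansion in the basis $\{\BS[\Bpsi_j]\}$. Collecting these column expansions into a matrix identity yields
$$
\BGG(\Bx-\By) = \sum_{j=1}^\infty \Ba_j(\Bx)\, \BS[\Bpsi_j](\By)^T, \qquad \By \in \p\GO,
$$
for some coefficient column vectors $\Ba_j(\Bx) \in \Rbb^3$, the series converging in $\Hcal$. To identify $\Ba_l(\Bx)$, I would integrate both sides (as matrix-valued functions of $\By$) against $\Bpsi_l$ along $\p\GO$. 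The left-hand side produces $\BS[\Bpsi_l](\Bx)$ directly from the definition of the single layer potential. On the right, the integral $\int_{\p\GO} \BS[\Bpsi_j](\By)^T \Bpsi_l(\By)\, d\Gs(\By) = \la \Bpsi_l, \BS[\Bpsi_j] \ra = -(\Bpsi_j,\Bpsi_l)_* = -\Gd_{jl}$ collapses the sum to $-\Ba_l(\Bx)$. Comparing gives $\Ba_l(\Bx) = -\BS[\Bpsi_l](\Bx)$, which establishes the claimed expansion on $\p\GO$.

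It remains to extend the identity from $\By \in \p\GO$ to $\By \in \GO$. For $\Bx$ fixed in the exterior, the left-hand side $\BGG(\Bx-\By)$ is columnwise a solution of $\Lcal_{\Gl,\Gm}\Bu = 0$ in $\GO$, and each partial sum on the right has the same property since every $\BS[\Bpsi_j]$ solves the Lamé system inside $\GO$. The boundary traces agree in $\Hcal = H^{1/2}(\p\GO)^3$ by the previous step, and well-posedness of the interior Dirichlet problem for $\Lcal_{\Gl,\Gm}$ then forces the two (Lamé-harmonic) extensions to coincide throughout $\GO$, giving the stated identity.

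The substantive obstacle here is conceptual rather than computational: in the planar case, Proposition \ref{prop:compact} forces $\BK^*$ to possess a complete orthonormal system of eigenfunctions, but in three dimensions no analogue of the compactness of $\BK^2 - k_0^2 I$ is presently available, which is why the existence of $\{\Bpsi_j\}$ must be imposed as a hypothesis. A secondary technical point is to justify interchanging the infinite sum with the integration against $\Bpsi_l$; this follows at once from the continuity of the duality pairing $\Hcal^* \times \Hcal \to \Cbb$ together with the convergence of the expansion in $\Hcal$.
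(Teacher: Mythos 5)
Your argument is correct and is precisely what the paper intends by the remark ``In three dimensions one can prove the following theorem similarly.'' You follow the same three steps as the proof of Theorem \ref{thm:add2D}: expand $\By \mapsto \BGG(\Bx-\By)$ on $\p\GO$ in the orthonormal basis $\{\BS[\Bpsi_j]\}$ of $\Hcal$, identify the coefficients $\Ba_l(\Bx) = -\BS[\Bpsi_l](\Bx)$ by pairing with $\Bpsi_l$ and using $\la \Bpsi_l, \BS[\Bpsi_j]\ra = -(\Bpsi_j,\Bpsi_l)_* = -\Gd_{jl}$, and extend from $\p\GO$ to $\GO$ via uniqueness of the interior Dirichlet problem. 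Your simplification is also exactly right: since in three dimensions $\BS[\BGvf](\Bx) = O(|\Bx|^{-1})$ for all $\BGvf$, one has $\wBS = \BS$ and the extra $W/\Psi$-terms from the two-dimensional formula are absorbed into the single series, which is why the hypothesis takes $\{\Bpsi_j\}$ to be a complete orthonormal system of the full space $\Hcal^*$ rather than of $\Hcal^*_\Psi$.
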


The expansion formula, sometimes called an addition formula, for the fundamental solution to the Laplace operator on disks, balls, ellipses, and ellipsoids is classical and well-known. That on ellipsoids is attributed to Heine (see \cite{Dassios-book}). The formulas describe expansions of the fundamental solution to the Laplace operator in terms of spherical harmonics (balls) and ellipsoidal harmonics (ellipses). General addition formula of the fundamental solution to the Laplace operator as in Theorem \ref{thm:add2D} and Theorem \ref{thm:add3D} was found in \cite{AK14}. It shows that the addition formula is a spectral expansion by eigenfunctions of the NP operator. Above theorems extend the formula to the Kelvin matrix of the fundamental solution to the Lam\'e system.
Using explicit forms of eigenfunctions to be derived in the next subsection, one can compute the expansion formula on disks and ellipses explicitly, even though we will not write down the formulas since they are too long.

\subsection{Spectrum of the NP operator on disks and ellipses}\label{subsec:disk}

In this section we write down spectrum of the NP operator on disks and ellipses. Detailed derivation of the spectrum is presented in Appendix \ref{appendix2D}.

\medskip
Suppose that $\GO$ is a disk. The spectrum of $\BK^*$ is as follows:

\medskip
\noindent{\bf Eigenvalues}:
\beq\label{eigendisk}
\frac{1}{2},\quad - \frac{\Gl}{2 (2 \Gm + \Gl)},\quad \pm k_0.
\eeq
It is worth mentioning that the second eigenvalue above is less than $1/2$ in absolute value because of the strong convexity condition \eqnref{strongcon}.

\medskip
\noindent{\bf Eigenfunctions:}
\begin{itemize}
\item[(i)] $1/2$:
\beq\label{vectordisk1}
(1,0)^T, \quad (0,1)^T, \quad (y,-x)^T,
\eeq
\item[(ii)] $- \frac{\Gl}{2 (2 \Gm + \Gl)}$:
\beq\label{vectordisk2}
(x,y)^T,
\eeq
\item[(iii)] $k_0$:
\beq\label{vectordisk3}
\begin{bmatrix} \cos m \Gt \\ \sin m \Gt\end{bmatrix}, \quad \begin{bmatrix} - \sin m \Gt \\ \cos m \Gt\end{bmatrix}, \quad m = 2, 3, \ldots,
\eeq
\item[(iv)] $-k_0$:
\beq\label{vectordisk4}
\begin{bmatrix} \cos m \Gt \\ -\sin m \Gt\end{bmatrix}, \quad \begin{bmatrix} \sin m \Gt \\ \cos m \Gt\end{bmatrix}, \quad m=1,2, \ldots.
\eeq
\end{itemize}
We emphasize that eigenfunctions are not normalized.

\medskip

We now describe eigenvalues and eigenfunctions on ellipses. Suppose that $\GO$ is an ellipse of the form
\beq\label{ellipse}
\frac{x_1^2}{a^2} + \frac{x_2^2}{b^2} < 1, \quad a \geq b > 0.
\eeq
Put $R:=\sqrt{a^2 - b^2}$.
Then the elliptic coordinates $(\Gr,\Go)$ are defined by
\beq\label{ellcoor}
x_1 = R\cosh \Gr \cos \Go, \quad x_2 = R\sinh \Gr \sin \Go, \quad \Gr \geq 0,\ 0 \leq \Go \leq 2\pi,
\eeq
in which the ellipse $\GO$ is given by $\p\GO =\{ (\Gr, \Go)  : \Gr = \Gr_0\}$, where $\Gr_0$ is defined to be $a=R\cosh \Gr_0$ and $b = R\sinh \Gr_0$.
Define
\beq\label{hzeroGo}
h_0(\Go) := R \sqrt{\sinh^2{\rho_0} + \sin^2{\Go}}.
\eeq
To make expressions short we set
\beq
q:= (\Gl+\mu)\sinh 2\rho_0
\eeq
and
\beq\label{Genpm}
\Gg_n^\pm := \sqrt{e^{4n\rho_0}\mu^2 + (\Gl+\mu)(\Gl+3\mu) + n q (\pm2e^{2n\rho_0}\mu+nq )}.
\eeq

The spectrum of $\BK^*$ is as follows

\medskip

\noindent{\bf Eigenvalues}:
\beq
\frac{1}{2}, \quad  k_{j,n}, \ j=1, \ldots, 4,
\eeq
where
\beq\begin{split}\label{Glone}
k_{1,n}&=\frac{e^{-2n \rho_0}}{2(\Gl+2\mu)}( -q n + \Gg_n^- ), \quad n \geq 1,  \\
k_{2,n}&=\frac{e^{-2n \rho_0}}{2(\Gl+2\mu)}( q n + \Gg_n^+ ), \quad n \geq 2,  \\
k_{3,n}&=\frac{e^{-2n \rho_0}}{2(\Gl+2\mu)}( -q n -\Gg_n^- ), \quad n \geq 1,  \\
k_{4,n}&=\frac{e^{-2n \rho_0}}{2(\Gl+2\mu)}( q n - \Gg_n^+ ), \quad n \geq 1.
\end{split}
\eeq

\medskip
\noindent{\bf Eigenfunctions}:
\begin{itemize}
\item[(i)] $1/2$:
\beq
\frac{1}{h_0(\Go)}\begin{bmatrix} 1 \\ 0 \end{bmatrix}, \quad \frac{1}{h_0(\Go)}\begin{bmatrix} 0 \\ 1 \end{bmatrix}, \quad \frac{1}{h_0(\Go)}\begin{bmatrix} \left((\Gl + \Gm) e^{-2\Gr_0} - (\Gl + 3 \Gm)\right) \sin \Go \\ \left((\Gl + \Gm) e^{-2\Gr_0} + (\Gl + 3 \Gm) \right) \cos \Go \end{bmatrix},
\eeq

\item[(ii)] $k_{j,n}$, $j=1,2,3,4$:
\beq\begin{split}\label{BGvfone}
\BGvf_{1,n} & = \Bpsi_{1,n} + \frac{p_n}{k_0 + k_{1,n}}
\Bpsi_{3,n}, \quad n \ge 1, \\
\BGvf_{2,n} &= \Bpsi_{2,n}
+  \frac{p_n}{k_0 + k_{2,n}} \Bpsi_{4,n} , \quad n \ge 2, \\
\BGvf_{3,n} &=
\frac{k_0 + k_{3,n}}{p_n} \Bpsi_{1,n}
+ \Bpsi_{3,n}, \quad n \ge 1 ,\\
\BGvf_{4,n} &=
\frac{k_0 + k_{4,n}}{p_n} \Bpsi_{2,n} + \Bpsi_{4,n} , \quad n \ge 1,
\end{split}\eeq
where
\beq\label{pn}
p_n =\left(\frac{1}{2}- k_0 \right) e^{-2n\rho_0},
\eeq
and
\beq
\begin{split}\label{Bpsidef}
\Bpsi_{1,n}(\Go) &= \frac{1}{h_0(\Go)} \begin{bmatrix}
\cos{ n \Go}\\ \sin n\Go \end{bmatrix},
\quad
\Bpsi_{2,n}(\Go) = \frac{1}{h_0(\Go)} \begin{bmatrix}
-\sin{ n \Go}\\ \cos n\Go \end{bmatrix},  \\
\Bpsi_{3,n}(\Go) & = \frac{1}{h_0(\Go)} \begin{bmatrix}
\cos{ n \Go}\\ -\sin n\Go \end{bmatrix},
\quad
\Bpsi_{4,n}(\Go) = \frac{1}{h_0(\Go)} \begin{bmatrix}
\sin{ n \Go}\\ \cos n\Go \end{bmatrix}.
\end{split}
\eeq
\end{itemize}

A remark on $k_{2,1}$ in \eqnref{Glone} is in order. It is given by
$$
k_{2,1}=\frac{e^{-2 \rho_0}}{2(\Gl+2\mu)}( q + \Gg_1^+ ),
$$
where
$$
\Gg_1^+ := \sqrt{e^{4\rho_0}\mu^2 + (\Gl+\mu)(\Gl+3\mu) + q (2e^{2\rho_0}\mu+q )}.
$$
Since
\begin{equation*}
\Gm^2 e^{4\Gr_0} + (\Gl + \Gm) (\Gl + 3 \Gm) + q (2 e^{2\Gr_0} \Gm + q) = \frac{1}{4}\left[(\Gl + 3 \Gm) e^{2\Gr_0} + (\Gl + \Gm) e^{-2\Gr_0}\right]^2,
\end{equation*}
we have $\Gl_{2,1} = \frac{1}{2}$ and the corresponding eigenfunction is
\begin{align*}
\BGvf_{2,1} = h_0^{-1}(\Go) \begin{bmatrix} \left((\Gl + \Gm) e^{-2\Gr_0} - (\Gl + 3 \Gm)\right) \sin \Go \\ \left((\Gl + \Gm) e^{-2\Gr_0} + (\Gl + 3 \Gm) \right) \cos \Go \end{bmatrix}.
\end{align*}
So it is listed as an eigenfunction for $1/2$.

Let us now look into the asymptotic behavior of eigenvalues as $n \to \infty$. One can easily see from the definition \eqnref{Genpm} that
\beq
\Gg_n^\pm = \Gm e^{2 n \rho_0} \pm q n \mp \frac{(\Gl + \Gm) (\Gl + 3 \Gm) q}{2\Gm^2} n e^{- 2 n \Gr_0} + e^{- 2 n \Gr_0} O(1), \nonumber
\eeq
where $O(1)$ indicates constants bounded independently of $n$.
So one infer from \eqnref{Glone} that
\beq\begin{split}\label{asympone}
k_{1, n} &= k_0 - \frac{q}{\Gl + 2 \mu} n e^{- 2 n \rho_0} + n^2 e^{- 4 n \rho_0}O(1),  \\
k_{2, n} &= k_0 + \frac{q}{\Gl + 2 \mu} n e^{- 2 n \rho_0} + n^2 e^{- 4 n \rho_0}O(1), \\
k_{3, n} &= - k_0 - \frac{(\Gl + \Gm) (\Gl + 3 \Gm) q}{4 \Gm^2\left( \Gl + 2 \mu \right)} n e^{- 4 n \rho_0} + e^{- 4 n \rho_0} O(1), \\
k_{4, n} &= - k_0 + \frac{(\Gl + \Gm) (\Gl + 3 \Gm) q}{4 \Gm^2\left( \Gl + 2 \mu \right)} n e^{- 4 n \rho_0} + e^{- 4 n \rho_0} O(1),
\end{split}\eeq
as $n \to \infty$. In particular, we see that $k_{1, n}$ and $k_{2, n}$ converge to $k_0$ while $k_{3, n}$ and $k_{4, n}$ to $-k_0$ as $n \to \infty$. We emphasize that the convergence rates are exponential.

\section{Anomalous localized resonance and cloaking}\label{sec:ALR}

\subsection{Resonance estimates}

Let $\GO$ be a bounded domain in $\Rbb^2$ with $C^{1,\Ga}$ boundary. Let $(\Gl,\Gm)$ be the Lam\'e constants of $\Rbb^2 \setminus \GO$ satisfying the strong convexity condition \eqnref{strongcon}. Let $(\wGl, \wGm)$ be Lam\'{e} constants of $\GO$. We assume that $(\wGl, \wGm)$ is of the form
\beq
  (\wGl, \wGm) := (c + i \Gd) (\Gl, \mu),
\eeq
where $c < 0$ and $\Gd > 0$. Let $\widetilde{\Cbb}$ be the isotropic elasticity tensor corresponds to $(\wGl, \wGm)$, namely, $\widetilde{\Cbb} = ( \widetilde{C}_{ijkl} )_{i, j, k, l = 1}^2$ where
\beq
\widetilde{C}_{ijkl} := \wGl \, \Gd_{ij} \Gd_{kl} + \wGm \, ( \Gd_{ik} \Gd_{jl} + \Gd_{il} \Gd_{jk} ),
\eeq
and let $\Lcal_{\wGl, \wGm}$ and $\p_{\wGv}$ be corresponding Lam\'e operator and conormal derivative, respectively. Then we have
$\Lcal_{\wGl, \wGm} = (c + i \Gd) \Lcal_{\Gl, \Gm}$ and $\p_{\wGv}= (c + i \Gd)  \p_{\Gv}$.

Let $\Cbb_\GO$ be the elasticity tensor in presence of inclusion $\GO$ so that
$$
\Cbb_\GO = \widetilde{\Cbb} \chi_{\GO} + \Cbb \chi_{\Rbb^2 \setminus \ol{\GO}},
$$
where $\chi$ denotes the characteristic function. We consider the following transmission problem:
\beq\label{eq:negative_index}
  \begin{cases}
    \ds \nabla \cdot \Cbb_\GO \hatna \Bu = \Bf  \quad & \mbox{in } \Rbb^2, \\
    \Bu (\Bx) = O(|\Bx|^{-1})  & \mbox{as } |\Bx| \rightarrow \infty,
  \end{cases}
\eeq
where $\Bf$ is a function compactly supported in $\Rbb^2 \setminus \ol{\GO}$ and satisfies
\beq
\int_{\Rbb^2} \Bf d\Bx =0.
\eeq
This condition is necessary for a solution to \eqnref{eq:negative_index} to exist. The problem \eqref{eq:negative_index} rephrased as
\beq\label{eq:transmission}
  \begin{cases}
    \Lcal_{\Gl, \Gm} \Bu = 0 & \mbox{ in } \GO, \\
    \Lcal_{\Gl, \Gm} \Bu = \Bf & \mbox{ in } \Rbb^2 \setminus \ol{\GO}, \\
    \Bu|_- = \Bu|_+ & \text{ on } \p \GO, \\
    (c+i\Gd) \p_{\Gv} \Bu|_- = \p_\Gv \Bu|_+ & \mbox{ on } \p \GO.
  \end{cases}
\eeq

Let
\beq\label{BFBx}
  \BF(\Bx) := \int_{\Rbb^2} \BGG(\Bx - \By) \Bf(\By) d\By, \quad \Bx \in \Rbb^2.
\eeq
We seek the solution $\Bu_\Gd$ to \eqref{eq:transmission} in the form
\beq\label{eq:soltion_representation}
  \Bu_\Gd (\Bx) = \BF(\Bx) + \BS [\BGvf_\Gd] (\Bx), \quad \Bx \in \Rbb^2,
\eeq
where $\BGvf_\Gd$ is to be determined. Since $\BS [\BGvf_\Gd] (\Bx)$ is continuous across $\p\GO$, the continuity of displacement (the third condition in \eqnref{eq:transmission}) is automatically satisfies. The continuity of the traction (the fourth condition in \eqnref{eq:transmission}) leads us to
\beq
\left( c + i \delta \right) \left( \p_\Gv \BF + \p_\Gv \BS [\BGvf_\Gd]|_- \right) =  \p_\Gv \BF + \p_\Gv \BS [\BGvf_\Gd]|_+   \quad \mbox{ on } \p \GO. \nonumber
\eeq
Therefore, using the jump formula \eqref{j-single}, we have
\beq\label{pGvBF}
  \left( k_\Gd(c) I - \BK^* \right) [\BGvf_\Gd] = \p_\Gv \BF,
\eeq
where
\beq
  k_\Gd(c) := \frac{c + 1 + i \Gd}{2 \left( c - 1 + i \Gd \right)}. \label{eq:solution_density}
\eeq

Let $k_j$, $j=1,2,\ldots$, be the eigenvalues (other than $1/2$) of $\BK^*$ counting multiplicities, and let $\{\Bpsi_j \}$ be corresponding normalized eigenfunctions. Then $\{\Bpsi_j , \BGvf^{(1)}, \BGvf^{(2)}, \BGvf^{(3)} \}$ is an orthonormal system of $\Hcal^*$ and
the solution to \eqnref{pGvBF} is given by
$$
\BGvf_\Gd = \sum_{j=1}^\infty \frac{( \Bpsi_j, \p_\Gv \BF )_*}{k_\Gd(c)-k_j} \Bpsi_j + \sum_{i=1}^3 \frac{( \BGvf^{(i)}, \p_\Gv \BF )_*}{k_\Gd(c)-1/2} \BGvf^{(i)}.
$$
Since $\Bf$ is supported in $\Rbb^2 \setminus \ol{\GO}$, $\Lcal_{\Gl, \Gm} \BF=0$ in $\GO$, and hence $\p_\Gv \BF \in \Hcal^*_\Psi$. So we have
\beq\label{BGvfGd}
\BGvf_\Gd = \sum_{j=1}^\infty \frac{( \Bpsi_j, \p_\Gv \BF )_*}{k_\Gd(c)-k_j} \Bpsi_j .
\eeq

Let
\beq
E(\Bu) := \int_{\GO} \hatna \Bu : \Cbb \hatna \Bu \, d\Bx.
\eeq
Here $\BA:\BB= \sum_{i,j} a_{ij} b_{ij}$ for two matrices $\BA=(a_{ij})$ and $\BB=(b_{ij})$. We are particulary interested in estimating $\Gd E(\Bu_\Gd)$ since cloaking by anomalous localized resonance is characterized by the condition $\Gd E(\Bu_\Gd) \to \infty$ as $\Gd \to 0$. It is worth mentioning that $\Gd E(\Bu_\Gd)$ is the imaginary part of $\int_{\Rbb^2} \hatna \Bu_\Gd : \Cbb_\GO \hatna \Bu_\Gd \, d\Bx$, which represents the elastic energy of the solution.

To present results of this section let us introduce a notation. For two quantities $A_\Gd$ and $B_\Gd$ depending on $\Gd$, $A_\Gd \sim B_\Gd$ means that there are constants $C_1$ and $C_2$ independent of $\Gd \le \Gd_0$ for some $\Gd_0$ such that
$$
C_1 \le \frac{A_\Gd}{B_\Gd} \le C_2.
$$

\begin{prop}\label{prop:solest}
Let $\Bu_\Gd$ be the solution to \eqnref{eq:transmission}. It holds that
\beq\label{EBu}
E(\Bu_\Gd - \BF) \sim \sum_{j=1}^\infty \frac{|( \Bpsi_j, \p_\Gv \BF )_*|^2 }{|k_\Gd(c)-k_j|^2}  .
\eeq
\end{prop}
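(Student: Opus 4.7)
The plan is to reduce the energy $E(\Bu_\Gd-\BF)$ to a quadratic form in the spectral coefficients of $\BGvf_\Gd$ and then verify that the multiplier $1/2-k_j$ appearing in that quadratic form is uniformly comparable to $1$.

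First I would use the representation \eqref{eq:soltion_representation} to write $\Bu_\Gd-\BF=\BS[\BGvf_\Gd]$. Since $\Lcal_{\Gl,\Gm}\BS[\BGvf_\Gd]=0$ in $\GO$, Green's identity for the Lam\'e system gives
\[
E(\BS[\BGvf_\Gd])=\int_{\p\GO}\BS[\BGvf_\Gd]\cdot\p_\nu\BS[\BGvf_\Gd]\big|_{-}\,d\Gs=\la (-\tfrac12 I+\BK^*)[\BGvf_\Gd],\,\BS[\BGvf_\Gd]\ra,
\]
where the jump relation \eqref{j-single} was used in the last equality. Next I would observe that $\BGvf_\Gd\in\Hcal^*_\Psi$: indeed, $\Bf$ is supported away from $\ol\GO$, so $\p_\Gv\BF\in\Hcal^*_\Psi$, and the expansion \eqref{BGvfGd} involves only the eigenfunctions $\Bpsi_j$, each of which lies in $\Hcal^*_\Psi$ because it is $(\cdot,\cdot)_*$-orthogonal to the $\tfrac12$-eigenspace $W=\mathrm{span}\{\BGvf^{(i)}\}$. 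Consequently $\wBS[\BGvf_\Gd]=\BS[\BGvf_\Gd]$, and using the definition \eqref{innerstar} of $(\cdot,\cdot)_*$ the boundary integral above becomes
\[
E(\BS[\BGvf_\Gd])=\bigl((\tfrac12 I-\BK^*)[\BGvf_\Gd],\BGvf_\Gd\bigr)_{*}.
\]

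Second, I would plug the expansion \eqref{BGvfGd} into this identity. Since $\{\Bpsi_j\}$ is $(\cdot,\cdot)_*$-orthonormal and $\BK^*\Bpsi_j=k_j\Bpsi_j$ with $k_j$ real (recall $\BK^*$ is self-adjoint in $(\cdot,\cdot)_*$), the cross terms vanish and one gets
\[
E(\Bu_\Gd-\BF)=\sum_{j=1}^{\infty}\bigl(\tfrac12-k_j\bigr)\,\frac{\bigl|(\Bpsi_j,\p_\Gv\BF)_{*}\bigr|^{2}}{|k_\Gd(c)-k_j|^{2}}.
\]

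The remaining task is to control the factor $\tfrac12-k_j$ from above and below by positive constants independent of $j$. The upper bound $\tfrac12-k_j\le 1$ is immediate from $k_j\in(-\tfrac12,\tfrac12]$ (Lemma on the spectrum of $\BK^*$). For the lower bound, $k_j\ne\tfrac12$ because the $\Bpsi_j$'s are chosen outside the $\tfrac12$-eigenspace $W$, and by Theorem~\ref{thmone} the only accumulation points of the spectrum are $\pm k_0$, which are strictly less than $\tfrac12$ (since $k_0=\mu/(2(2\mu+\Gl))<\tfrac12$ by the strong convexity condition \eqref{strongcon}). Thus there is a constant $c_0>0$ with $\tfrac12-k_j\ge c_0$ for all $j$. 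This uniform two-sided bound turns the equality above into the asserted equivalence \eqref{EBu}. The only subtle point, and the one I would be most careful about, is the separation of $\tfrac12$ from the rest of the spectrum: it is not a priori obvious from compactness of $\BK^2-k_0^2 I$ alone, but follows from identifying the eigenspace at $\tfrac12$ with $W$ (which we have excluded from the expansion) together with the explicit location $\pm k_0$ of the other accumulation points.
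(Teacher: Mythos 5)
Your proof is correct and follows essentially the same route as the paper: Green's formula for the Lam\'e system plus the jump relation reduce $E(\BS[\BGvf_\Gd])$ to the quadratic form $\bigl((\tfrac12 I-\BK^*)[\BGvf_\Gd],\BGvf_\Gd\bigr)_*$, and substituting the spectral expansion \eqnref{BGvfGd} together with the uniform two-sided bound $C\le \tfrac12-k_j<1$ (which uses that the accumulation points $\pm k_0$ are strictly below $\tfrac12$) gives the equivalence. You handle the signs and the observation that $\BGvf_\Gd\in\Hcal^*_\Psi$ (so that $\wBS=\BS$ on it) a bit more explicitly than the paper's proof, but the argument is the same.
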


\begin{proof}
Using the Green's formula for Lam\'{e} operator and the jump formula \eqref{j-single}, we have
\begin{align*}
E(\Bu_\Gd - \BF) &= E(\BS [\BGvf_\Gd]) = \int_{\GO} \hatna \BS [\BGvf_\Gd] : \Cbb \hatna \BS [\BGvf_\Gd] \, d\Bx \\
&= \int_{\p \GO} \BS [\BGvf_\Gd] \cdot \p_\nu \BS [\BGvf_\Gd] \, d\Gs \\
&= - \left\la (-\frac{1}{2}I + \BK^*)[\BGvf_\Gd], \BS [\BGvf_\Gd] \right\ra = \left( (-\frac{1}{2}I + \BK^*)[\BGvf_\Gd], \BGvf_\Gd \right)_*
\end{align*}
It then follows from \eqnref{BGvfGd} that
$$
E(\Bu_\Gd - \BF) = \sum_{j=1}^\infty \frac{(1/2-k_j) |( \Bpsi_j, \p_\Gv \BF )_*|^2 }{|k_\Gd(c)-k_j|^2} .
$$
Since $k_{j,n}$ accumulates at $k_0$ or $-k_0$ and $-1/2 < k_{j,n} < 1/2$, there is a constant $C>0$ such that
\beq\label{Ck1}
C \le |\frac{1}{2}-k_j| < 1
\eeq
for all $j$. So we have \eqnref{EBu}.
\end{proof}

Note that $k_\Gd(c) \to k(c)$ as $\Gd \to 0$, where $k(c)$ be the number defined in \eqnref{kzero}.  More precisely, we have
\beq
|k_\Gd(c) - k(c)| \sim \Gd.
\eeq
So we obtain the following theorem from Proposition \ref{prop:solest}, which shows that resonance occurs at the eigenvalue as $\Gd \to 0$ at the rate of $\Gd^{-2}$.
\begin{theorem}
Let $c$ be such that $k(c)=k_j$ for some $j$ and suppose that $( \Bpsi_j, \p_\Gv \BF )_* \neq 0$. Then, we have
\beq\label{resonanceest}
E(\Bu_\Gd) \sim  \Gd^{-2}
\eeq
as $\Gd \to 0$.
\end{theorem}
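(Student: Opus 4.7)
The plan is to use Proposition~\ref{prop:solest} to identify the dominant term of the spectral series for $E(\Bu_\Gd-\BF)$, and then to transfer the resulting $\Gd^{-2}$ asymptotic to $E(\Bu_\Gd)$ via a triangle-type inequality for the quadratic form $E$. Writing $a_l := (\Bpsi_l, \p_\Gv \BF)_*$, the identity $k(c) = k_j$ together with $|k_\Gd(c) - k(c)| \sim \Gd$ yields $|k_\Gd(c) - k_j| \sim \Gd$, so the $j$-th term of \eqnref{EBu} is of order $|a_j|^2 \Gd^{-2}$; the hypothesis $a_j \neq 0$ then immediately produces the lower bound $E(\Bu_\Gd-\BF) \gtrsim \Gd^{-2}$.

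For the matching upper bound I would invoke Theorem~\ref{thmone}: every eigenvalue other than $\pm k_0$ has finite multiplicity, and $\pm k_0$ are the only possible accumulation points of the spectrum of $\BK^*$, so $k_j$ is an isolated spectral point. Setting $d := \inf\{|k_l - k_j| : k_l \neq k_j\} > 0$, for $\Gd$ so small that $|k_\Gd(c) - k_j| < d/2$ the terms in \eqnref{EBu} with $k_l \neq k_j$ obey $|k_\Gd(c) - k_l| \ge d/2$ and thus contribute in total at most $(4/d^2)\|\p_\Gv \BF\|_*^2$, uniformly in $\Gd$, while the contribution from the (possibly degenerate) $k_j$-eigenspace is bounded by $\Gd^{-2}\|\p_\Gv \BF\|_*^2$ up to an absolute constant, using Parseval's identity $\sum_l |a_l|^2 = \|\p_\Gv \BF\|_*^2$. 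Combining the two bounds yields $E(\Bu_\Gd-\BF) \sim \Gd^{-2}$.

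To pass from $E(\Bu_\Gd-\BF)$ to $E(\Bu_\Gd)$, I would use that $E$ is a positive semidefinite quadratic form, hence satisfies the triangle-type inequality $E(\Bu+\Bv) \le 2E(\Bu) + 2E(\Bv)$. Applying this in both directions to the decomposition $\Bu_\Gd = (\Bu_\Gd - \BF) + \BF$ produces
$$
\tfrac{1}{2} E(\Bu_\Gd - \BF) - E(\BF) \;\le\; E(\Bu_\Gd) \;\le\; 2 E(\Bu_\Gd - \BF) + 2 E(\BF),
$$
and since $\BF$ is independent of $\Gd$ (so $E(\BF)$ is a finite constant) while $E(\Bu_\Gd - \BF) \sim \Gd^{-2}$ diverges, the conclusion $E(\Bu_\Gd) \sim \Gd^{-2}$ follows. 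The main delicacy lies in the upper bound of the second step: if one attempted to isolate only a single eigenfunction of the $k_j$-eigenspace rather than treating the whole space at once, control could be lost in cases (such as $k_j = \pm k_0$ on the disk) where the eigenspace is infinite-dimensional; treating the whole eigenspace together via Parseval sidesteps this issue.
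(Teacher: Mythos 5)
Your lower bound and the transfer from $E(\Bu_\Gd - \BF)$ to $E(\Bu_\Gd)$ via the parallelogram/triangle inequality for the quadratic form $E$ are both fine, and in fact you are more explicit than the paper, whose proof simply states $E(\Bu_\Gd - \BF) \gtrsim \Gd^{-2}$ and then invokes finiteness of $E(\BF)$, leaving the upper bound entirely implicit. However, your argument for the matching upper bound has a gap: you set $d := \inf\{|k_l - k_j| : k_l \neq k_j\}$ and assert $d>0$ on the grounds that $k_j$ is "an isolated spectral point." This does not follow from Theorem~\ref{thmone}. That theorem says all eigenvalues other than $\pm k_0$ have finite multiplicity and the spectrum accumulates at $\pm k_0$; it does not say the eigenvalues are isolated from one another, and in the case $k_j = k_0$ or $k_j = -k_0$ (which the hypothesis $k(c) = k_j$ does not exclude), $k_j$ is precisely an accumulation point of the other eigenvalues on a generic $C^{1,\Ga}$ domain, so that $d = 0$. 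Your remark at the end about the disk correctly flags the infinite-multiplicity issue at $\pm k_0$, but isolatedness fails on domains where eigenvalues accumulate at $\pm k_0$ while $\pm k_0$ is still an eigenvalue, and nothing in the paper rules this out.

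The fix is actually simpler than what you attempted, and it also handles the accumulation case: since $\BK^*$ is self-adjoint with respect to $(\cdot,\cdot)_*$, every $k_l$ is real, whereas $k_\Gd(c)$ has nonzero imaginary part. A direct computation from \eqnref{eq:solution_density} gives
\[
\operatorname{Im} k_\Gd(c) = \frac{-\Gd}{(c-1)^2 + \Gd^2},
\]
so $|\operatorname{Im} k_\Gd(c)| \gtrsim \Gd$ for small $\Gd$ (as $c<0$ is fixed). Hence $|k_\Gd(c) - k_l| \ge |\operatorname{Im} k_\Gd(c)| \gtrsim \Gd$ uniformly in $l$, and the whole series in \eqnref{EBu} is bounded by $\Gd^{-2}\sum_l |(\Bpsi_l,\p_\Gv\BF)_*|^2 = \Gd^{-2}\|\p_\Gv\BF\|_*^2$ by Parseval. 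This gives $E(\Bu_\Gd - \BF) \lesssim \Gd^{-2}$ with no need to partition the spectrum or appeal to any spectral gap, and combined with your lower bound and triangle inequality it yields $E(\Bu_\Gd) \sim \Gd^{-2}$.
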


\begin{proof}
We have from \eqnref{EBu}
$$
E(\Bu_\Gd - \BF) \gtrsim  \Gd^{-2} .
$$
Since $E(\BF)$ is finite, we obtain \eqnref{resonanceest}.
\end{proof}

\subsection{Anomalous localized resonance on ellipses}

Anomalous localized resonance occurs at the accumulation points of eigenvalues of the NP operator (if the accumulation points are not eigenvalues). So, we assume
\beq
k(c)=k_0 \ \text{ or } \ -k_0.
\eeq
For analysis of anomalous localized resonance we need explicit form of eigenvalues and eigenfunctions. So we assume $\GO$ is an ellipse of the form \eqnref{ellipse} so that $\GO=\{ \Gr < \Gr_0 \}$ in elliptic coordinates. We emphasize that anomalous localized resonance does not occur on disks since $\pm k_0$ are eigenvalues.

We further assume that the source function $\Bf$ is a polarizable dipole, namely,
\beq
\Bf = \BA \nabla \Gd_\Bz =
\begin{bmatrix}
  \Ba_1 \cdot \nabla \Gd_\Bz \\
  \Ba_2 \cdot \nabla \Gd_\Bz
\end{bmatrix}, \nonumber
\eeq
where $\Ba_1$ and $\Ba_2$ are constant vectors, $\BA = (\Ba_1 \ \Ba_2 )^T$, and $\Bz \in \Rbb^2 \setminus \ol{\GO}$. In this case the function $\BF$ defined by \eqnref{BFBx} is given by
\beq\label{dipolesource}
\BF(\Bx) = \BF_\Bz(\Bx) = - \left( \left( \BA \nabla_{\Bx} \right)^T \BGG(\Bx - \Bz) \right)^T.
\eeq

Let $(\Gr_\Bz, \Go_\Bz)$ be the elliptic coordinates of $\Bz$, and let $\BU(\Gt)$ be the rotation by the angle $\Gt$, namely,
\beq\label{rotation}
  \BU(\Gt) :=
  \begin{bmatrix}
    \cos \Gt & - \sin \Gt \\
    \sin \Gt & \cos \Gt
  \end{bmatrix}.
\eeq
We obtain the following theorems when $k(c)=k_0$.
\begin{theorem} \label{thm:lambda_infty_blowup}
Assume $k(c) = k_0$. Let $\Bu_\Gd$ be the solution to \eqnref{eq:transmission}.
  If $\Ba_1 \neq \BU(- \pi / 2) \Ba_2$, then we have
  \beq\label{EBuGk}
    E(\Bu_\Gd) \sim
    \begin{cases}
      \left\vert \log{\Gd} \right\vert \Gd^{- 3 + \Gr_\Bz / \Gr_0} & \text{ if } \Gr_0 < \Gr_\Bz \le 3 \Gr_0, \\
      1 & \text{ if } \Gr_\Bz > 3 \Gr_0,
    \end{cases}
  \eeq
as $\Gd \to 0$.
\end{theorem}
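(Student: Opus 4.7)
The plan is to extract the asymptotics directly from the spectral representation given in Proposition~\ref{prop:solest}, using the explicit spectrum of $\BK^*$ on the ellipse from Section~\ref{subsec:disk}. Let $\widehat{\BGvf}_{j,n}:=\BGvf_{j,n}/\|\BGvf_{j,n}\|_*$ denote the normalized eigenfunctions, so that \eqnref{EBu} reads
\beq\label{planonesum}
E(\Bu_\Gd-\BF)\sim \sum_{j,n}\frac{|(\widehat{\BGvf}_{j,n},\p_\Gv\BF_\Bz)_*|^2}{|k_\Gd(c)-k_{j,n}|^2}.
\eeq
Under $k(c)=k_0$, the eigenvalues $k_{3,n},k_{4,n}$ accumulate at $-k_0\neq k_0$ and the eigenvalue $1/2$ stays bounded away from $k_0$, so those terms have denominators bounded below and produce a contribution uniformly bounded as $\Gd\to 0$. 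The analysis thus reduces to $j=1,2$, for which \eqnref{asympone} together with $|k_\Gd(c)-k(c)|\sim\Gd$ yields $|k_\Gd(c)-k_{j,n}|^2 \sim \Gd^2 + n^2 e^{-4n\rho_0}$.

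For the numerator, I would first observe from \eqnref{BGvfone}--\eqnref{pn} that $\BGvf_{j,n}=\Bpsi_{j,n}+O(e^{-2n\rho_0})\,\Bpsi_{j+2,n}$ for $j=1,2$, so $\|\BGvf_{j,n}\|_*^2\sim\|\Bpsi_{j,n}\|_*^2\sim 1/n$ by the $H^{-1/2}$ equivalence of $\|\cdot\|_*$ and the mode-$n$ structure of $\Bpsi_{j,n}$. For the $*$-pairing itself, $\p_\Gv\BF_\Bz\in\Hcal^*_\Psi$ (because $\Lcal_{\Gl,\Gm}\BF_\Bz=0$ in $\GO$) and $\BGvf_{j,n}\in\Hcal^*_\Psi$ (eigenfunction with eigenvalue $\neq 1/2$), so $\wBS=\BS$ on both. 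Applying Green's identity in $\GO$ to the pair $\BF_\Bz,\BS[\BGvf_{j,n}]$ and using the jump relation \eqnref{j-single} together with $\BK^*\BGvf_{j,n}=k_{j,n}\BGvf_{j,n}$ gives the clean identity
\beq\label{planGreen}
(\BGvf_{j,n},\p_\Gv\BF_\Bz)_* = \left(\tfrac12-k_{j,n}\right)\int_{\p\GO}\BF_\Bz\cdot\BGvf_{j,n}\,d\Gs,
\eeq
whose prefactor $1/2-k_{j,n}\to 1/2-k_0>0$ is bounded away from $0$. Thus everything reduces to the $n$-th Fourier coefficient of $\BF_\Bz$ against $\Bpsi_{j,n}$ on $\p\GO$, modulo an exponentially small correction from the $\Bpsi_{j+2,n}$-component.

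That coefficient I would compute by substituting the elliptic-coordinate expansion of the Kelvin matrix. For $\Bz$ at elliptic coordinate $\rho_\Bz>\rho_0$, the entries of $\BGG(\Bx-\Bz)$ admit an expansion whose $n$-th coefficients on $\p\GO$ are $O(n^{-1}e^{-n(\rho_\Bz-\rho_0)})$ (matching the Laplace log, with matrix-valued coefficients depending on $\omega_\Bz$); the operator $\BA\nabla$ in \eqnref{dipolesource} removes the $1/n$ and multiplies by a matrix depending linearly on $\BA$. Pairing against $\Bpsi_{j,n}$ then yields
\beq\label{plannum}
|(\widehat{\BGvf}_{j,n},\p_\Gv\BF_\Bz)_*|^2\sim n\,e^{-2n(\rho_\Bz-\rho_0)},\qquad j=1,2.
\eeq
The condition $\Ba_1\neq\BU(-\pi/2)\Ba_2$ is equivalent to $\BA$ not being of the form $aI+bJ$ with $J$ the $\pi/2$-rotation; this is precisely the non-degeneracy that excludes the purely gradient-plus-curl polarization, which does not couple to the shear-like modes that accumulate at $k_0$, and it guarantees that the leading coefficient of at least one of $j=1,2$ is nonzero.

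Finally, substituting \eqnref{plannum} and the denominator estimate into \eqnref{planonesum} and splitting the sum at the crossover $n_*:=|\log\Gd|/(2\rho_0)$ (where $n e^{-2n\rho_0}\sim\Gd$): for $n<n_*$ the summand behaves like $n^{-1}e^{(6\rho_0-2\rho_\Bz)n}$, while for $n>n_*$ the geometric tail sums to $\sim\Gd^{-2}n_*e^{-2n_*(\rho_\Bz-\rho_0)}=|\log\Gd|\,\Gd^{\rho_\Bz/\rho_0-3}$. Comparing the two regimes yields \eqnref{EBuGk}: for $\rho_0<\rho_\Bz\le 3\rho_0$ the tail dominates and gives $|\log\Gd|\,\Gd^{\rho_\Bz/\rho_0-3}$, whereas for $\rho_\Bz>3\rho_0$ both pieces are $o(1)$ and only the bounded contributions from the non-accumulating spectrum and the finite quantity $E(\BF)$ survive, producing $E(\Bu_\Gd)\sim 1$. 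The main obstacle is the Fourier-coefficient step: because the Kelvin matrix is anisotropic (owing to the $x_ix_j/|\Bx|^2$ term), its elliptic-coordinate expansion couples $\Bpsi_{1,n}$ and $\Bpsi_{2,n}$ in an $\BA$-dependent way, so pinning down the degenerate locus $\BA=aI+bJ$ and confirming that \eqnref{plannum} is sharp (rather than a one-sided bound) requires carrying the expansion to the order at which this coupling becomes visible—a subtlety with no analogue in the scalar Laplace CALR argument.
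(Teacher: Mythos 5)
Your overall strategy is exactly the paper's: reduce $E(\Bu_\Gd-\BF_\Bz)$ via \eqnref{EBu} to a sum over eigenmodes, show the $j=3,4$ modes and $1/2$-eigenspace contribute $O(1)$, use $|k_\Gd(c)-k_{j,n}|^2\sim\Gd^2+n^2e^{-4n\Gr_0}$ for $j=1,2$ from \eqnref{asympone}, and extract the Green's-formula identity relating the $*$-coefficient to $(\BA\nabla)^T\BS[\BGvf_{j,n}](\Bz)$; the final crossover analysis at $n_*\sim|\log\Gd|/(2\Gr_0)$ also matches. The proof is not complete, however, and the gap is precisely where you flag ``the main obstacle.''

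The claimed estimate $|(\widehat{\BGvf}_{j,n},\p_\Gv\BF_\Bz)_*|^2\sim n\,e^{-2n(\Gr_\Bz-\Gr_0)}$ for \emph{each} $j=1,2$ is false as a two-sided bound. The explicit computation (Lemma~\ref{ap-prop} together with \eqnref{nabla,S-1}--\eqnref{nabla,S-2}) gives, modulo exponentially smaller terms,
$(\BA\nabla)^T\BS[\BGvf_{1,n}](\Bz)\propto e^{-n(\Gr_\Bz-\Gr_0)}\bigl(\Ba_1+\BU(\pi/2)\Ba_2\bigr)\cdot\BU(n\Go_\Bz)\Bb(\Bz)$ and an analogous expression for $j=2$ with $\BU(-\pi/2)\Ba_1+\Ba_2$ in place of $\Ba_1+\BU(\pi/2)\Ba_2$. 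The dot products contain the oscillatory factor $\BU(n\Go_\Bz)\Bb(\Bz)$ and thus can be as small as you like for particular values of $n$; there is no lower bound mode by mode, and hence no two-sided asymptotic for an individual $j$. What saves the argument is the Pythagorean-type identity
\begin{equation*}
\bigl|(\Ba_1+\BU(\pi/2)\Ba_2)\cdot\BU(n\Go)\Bb\bigr|^2 + \bigl|(\BU(-\pi/2)\Ba_1+\Ba_2)\cdot\BU(n\Go)\Bb\bigr|^2 = |\Ba_1+\BU(\pi/2)\Ba_2|^2\,|\Bb|^2,
\end{equation*}
which is independent of $n$ and $\Go$, so that the \emph{sum} over $j=1,2$ satisfies $|\Ga_{1,n}(\Bz)|^2+|\Ga_{2,n}(\Bz)|^2\approx n\,e^{-2n(\Gr_\Bz-\Gr_0)}$ exactly when $\Ba_1+\BU(\pi/2)\Ba_2\neq 0$, i.e.\ $\Ba_1\neq\BU(-\pi/2)\Ba_2$. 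This is why the paper carries the elliptic-coordinate expansion of $(\BA\nabla)^T\BS[\Bpsi_{j,n}]$ all the way to a closed form: without it the lower bound is out of reach, and only the sum over the pair, not each term, has the sharp asymptotic you need to feed into the series splitting. Filling the gap means doing that computation (or reproducing it from Lemma~\ref{ap-prop}) and invoking the identity above before applying the $n$-summation.
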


\begin{theorem} \label{thm:solution_bound_1}
Assume $k(c) = k_0$. Let $\Bx = (\Gr, \Go)$ in the elliptic coordinates.
Then it holds for all $\Bx$ satisfying $\Gr + \Gr_{\Bz} - 4 \Gr_0 > 0$ that
  \beq
  | \Bu_\Gd(\Bx) - \BF_{\Bz}(\Bx) | \lesssim \sum_{n = 1}^\infty \frac{e^{- n \left( \Gr + \Gr_{\Bz} - 4 \Gr_0 \right)}}{n}.
  \eeq
In particular, for any $\ol{\Gr} > 4 \Gr_0 - \Gr_{\Bz}$ there exists some $C = C_{\ol{\Gr}} > 0$ such that
  \beq\label{Gkbound}
  \sup_{\Gr \ge \ol{\Gr}} | \Bu_\Gd(\Bx) - \BF_{\Bz}(\Bx) | < C.
  \eeq
\end{theorem}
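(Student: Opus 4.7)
The plan is to combine the spectral representation \eqref{BGvfGd} with \eqref{eq:soltion_representation} to write
\begin{equation*}
\Bu_\Gd(\Bx) - \BF_\Bz(\Bx) = \BS[\BGvf_\Gd](\Bx) = \sum_{j} \frac{(\Bpsi_j, \p_\Gv \BF_\Bz)_*}{k_\Gd(c) - k_j}\, \BS[\Bpsi_j](\Bx),
\end{equation*}
where $\{\Bpsi_j\}$ is the $(\cdot,\cdot)_*$-orthonormal system obtained from the eigenfunctions $\BGvf_{j,n}$ of \eqref{BGvfone}. Since $k(c) = k_0$, only the families $k_{1,n}$ and $k_{2,n}$ can make $|k_\Gd(c) - k_j|$ small in view of \eqref{asympone}. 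I would split the series accordingly: all terms coming from the other eigenvalues have denominator uniformly bounded below in $\Gd$, and the associated $\BS[\Bpsi_j]$ decay even faster in the exterior, so their total contribution is dominated by the resonant tail bound obtained below.

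For the resonant indices $j=1,2$ I would assemble three ingredients. First, \eqref{asympone} yields
$|k_\Gd(c) - k_{j,n}| \geq |k_0 - k_{j,n}| \gtrsim n e^{-2n\Gr_0}$. Second, separation of variables for the Lam\'e system in elliptic coordinates applied to the explicit eigenfunctions \eqref{BGvfone}--\eqref{Bpsidef} expresses $\BS[\BGvf_{j,n}]$ in the exterior as a bounded combination of Lam\'e elliptic modes decaying like $e^{-n(\Gr-\Gr_0)}(\cos n\Go, \sin n\Go)^T$ (the would-be $\Gr e^{-n\Gr}$ modes cancel thanks to the precise coefficients in \eqref{BGvfone}); a direct computation of $-\la \BGvf_{j,n}, \BS[\BGvf_{j,n}]\ra$ shows $\|\BGvf_{j,n}\|_* \sim n^{-1/2}$, and after normalization $|\BS[\Bpsi_j](\Bx)| \lesssim n^{-1/2} e^{-n(\Gr - \Gr_0)}$. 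Third, applying Green's identity in $\GO$ to $\BS[\Bpsi_j]$ and $\BF_\Bz$ (both Lam\'e-harmonic there) together with \eqref{j-single} gives the convenient reduction $(\Bpsi_j, \p_\Gv \BF_\Bz)_* = (1/2 - k_j)\la \Bpsi_j, \BF_\Bz|_{\p\GO}\ra$; expanding the dipole source $\BF_\Bz$ on $\p\GO$ in elliptic harmonics produces Fourier coefficients of order $e^{-n(\Gr_\Bz - \Gr_0)}/n$, so $|(\Bpsi_j, \p_\Gv \BF_\Bz)_*| \lesssim n^{-1/2} e^{-n(\Gr_\Bz - \Gr_0)}$.

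Multiplying the three bounds, the $n$-th resonant summand is controlled by
\begin{equation*}
\frac{n^{-1/2} e^{-n(\Gr_\Bz - \Gr_0)} \cdot n^{-1/2} e^{-n(\Gr - \Gr_0)}}{n e^{-2n\Gr_0}} = \frac{e^{-n(\Gr + \Gr_\Bz - 4\Gr_0)}}{n^2} \leq \frac{e^{-n(\Gr + \Gr_\Bz - 4\Gr_0)}}{n},
\end{equation*}
and summation over $n$ yields the first assertion; the bound \eqref{Gkbound} follows immediately from the convergence of the geometric series whenever $\Gr + \Gr_\Bz - 4\Gr_0 > 0$. The principal obstacle is the careful bookkeeping of the four exponential factors $e^{\pm n\Gr_0}$: two arise from normalizing $\BGvf_{j,n}$ (one each in the source coefficient and in $\BS[\Bpsi_j]$) and two more from the resonant denominator $\sim n e^{-2n\Gr_0}$; together they shift the decay exponent by $4\Gr_0$ and pinpoint the sharp threshold $\ol{\Gr} = 4\Gr_0 - \Gr_\Bz$ separating the cloaked region from the exterior region in which the field remains bounded.
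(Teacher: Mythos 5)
Your overall route is the same as the paper's: expand $\Bu_\Gd - \BF_\Bz$ in the $(\cdot,\cdot)_*$-orthonormal eigenfunctions, bound separately the resonant families ($j=1,2$, with $k_{j,n}\to k_0$) and the non-resonant ones ($j=3,4$, with $k_{j,n}\to -k_0$), and multiply a source-coefficient bound, a resolvent bound, and an exterior layer-potential bound. Two of your intermediate quantitative claims are wrong, however, and only by luck do they not break the conclusion.

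First, your claim $|(\Bpsi_j,\p_\Gv\BF_\Bz)_*|\lesssim n^{-1/2}e^{-n(\Gr_\Bz-\Gr_0)}$ is off by a factor of $n$: the correct order is $n^{1/2}e^{-n(\Gr_\Bz-\Gr_0)}$, as recorded in \eqref{eq:alpha_1+alpha_2}. The source of the error is the extra $1/n$ you assign to the Fourier coefficients of $\BF_\Bz$ on $\p\GO$. That $1/n$ is appropriate for the fundamental solution $\BGG(\cdot-\Bz)$ itself, but the dipole $\BF_\Bz$ in \eqref{dipolesource} carries one derivative, which produces a compensating factor of $n$; so those coefficients are $O(e^{-n(\Gr_\Bz-\Gr_0)})$, not $O(e^{-n(\Gr_\Bz-\Gr_0)}/n)$. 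Your two other ingredients ($\|\BGvf_{j,n}\|_*\approx n^{-1/2}$ and $|\BS[\Bpsi_j](\Bx)|\lesssim n^{-1/2}e^{-n(\Gr-\Gr_0)}$ after normalization) are correct; combining them with the corrected source estimate and $|k_0-k_{j,n}|\gtrsim n e^{-2n\Gr_0}$ gives exactly the paper's $n$-th summand $e^{-n(\Gr+\Gr_\Bz-4\Gr_0)}/n$, not the $e^{-n(\cdot)}/n^2$ in your display. Your extra decay in $n$ is spurious; the theorem statement with $1/n$ is in fact the sharp output of this calculation.

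Second, your reason for discarding the $j=3,4$ families — that "the associated $\BS[\Bpsi_j]$ decay even faster" — is false. By \eqref{psibound} and \eqref{psibound34}, one has $|\BS[\BGvf_{j,n}](\Bx)|\lesssim e^{-n(\Gr-\Gr_0)}/n$ for $j=1,2$ but only $|\BS[\BGvf_{j,n}](\Bx)|\lesssim e^{-n(\Gr-\Gr_0)}$ for $j=3,4$; likewise the source coefficients are larger, $|\alpha_{j,n}|\lesssim n^{3/2}e^{-n(\Gr_\Bz-\Gr_0)}$ by \eqref{eq:alpha_3+alpha_4}. What actually makes the non-resonant contribution negligible is the absence of a small denominator: by \eqref{asympone} and \eqref{k34n}, $|k_0-k_{j,n}|\ge C>0$ for $j=3,4$, so the $n$-th term is $O(n^2 e^{-n(\Gr+\Gr_\Bz-2\Gr_0)})$, which is $\lesssim e^{-n(\Gr+\Gr_\Bz-4\Gr_0)}/n$ only because $n^3 e^{-2n\Gr_0}$ is bounded in $n$. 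You should make this balance explicit rather than appeal to a decay comparison that goes the wrong way.
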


We also obtain the following theorems when $k(c)=-k_0$.

\begin{theorem} \label{thm:-lambda_infty_blowup}
  Assume that $k(c) = - k_0$.
  If $\Ba_1 \neq \BU(- \pi / 2) \Ba_2$, then we have
  \beq
    E(\Bu_\Gd) \sim
    \begin{cases}
      | \log{\Gd} |^3 \Gd^{- 5/2 + \Gr_{\Bz} / 2 \Gr_0 } & \mbox{ if } \Gr_0 < \Gr_{\Bz} \le 5 \Gr_0, \\
      1 & \mbox{ if } \Gr_{\Bz} > 5 \Gr_0,
    \end{cases}
  \eeq
  as $\Gd \to 0$.
\end{theorem}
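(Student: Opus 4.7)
The plan is to parallel the proof of Theorem~\ref{thm:lambda_infty_blowup}, but now working with the eigenvalue sequences $\{k_{3,n}\}$ and $\{k_{4,n}\}$ that accumulate at $-k_0$, and to keep track of the fact that their convergence to $-k_0$ is of order $n e^{-4 n \Gr_0}$ rather than the order $n e^{-2 n \Gr_0}$ seen in the $k(c) = k_0$ case. Starting from Proposition~\ref{prop:solest} and the explicit spectrum on ellipses in Section~\ref{subsec:disk}, the hypothesis $k(c) = -k_0$ means that only the eigenfunctions $\BGvf_{3,n}$ and $\BGvf_{4,n}$ can resonate as $\Gd \to 0$, while the families associated with $1/2$, $k_{1,n}$, and $k_{2,n}$ remain at a positive distance from $k(c)$ and contribute only an $O(1)$ term to $E(\Bu_\Gd - \BF)$.

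A direct expansion of $k_\Gd(c)$ in \eqnref{eq:solution_density} gives $k_\Gd(c) = - k_0 - i \Gd / (c - 1)^2 + O(\Gd^2)$, while \eqnref{asympone} yields $k_{j,n} + k_0 \sim \mp C_{\ast}\, n e^{-4 n \Gr_0}$ for $j = 3, 4$ with a nonzero constant $C_\ast$. Consequently,
\[
| k_\Gd(c) - k_{j,n} |^2 \;\sim\; \Gd^2 + n^2 e^{-8 n \Gr_0}, \qquad j = 3, 4,
\]
which is minimized at $n^{*} \sim |\log \Gd| / (4 \Gr_0)$ with minimum value of order $\Gd^2$. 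This critical index is exactly \emph{half} the one appearing in Theorem~\ref{thm:lambda_infty_blowup}, and this halving is ultimately what produces the exponent $-5/2 + \Gr_\Bz /(2 \Gr_0)$ in place of $-3 + \Gr_\Bz / \Gr_0$.

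The second ingredient is the projections $(\BGvf_{j,n}, \p_\Gv \BF_\Bz)_*$ for $j = 3, 4$. From \eqnref{BGvfone} and \eqnref{pn} one has $(k_0 + k_{j,n})/p_n \sim n e^{-2 n \Gr_0}$, so $\BGvf_{3,n}$ and $\BGvf_{4,n}$ are small perturbations of $\Bpsi_{3,n}$ and $\Bpsi_{4,n}$ in which the perturbative components along $\Bpsi_{1,n}$ and $\Bpsi_{2,n}$ carry the weight $n e^{-2 n \Gr_0}$. The projections of $\p_\Gv \BF_\Bz$ against $\{\Bpsi_{k,n}\}$, together with the $*$-norms $\|\BGvf_{j,n}\|_*$, are to be computed by expanding the Kelvin matrix $\BGG(\Bx - \Bz)$ in elliptic harmonics via Theorem~\ref{thm:add2D} and differentiating in $\Bz$. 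The resulting moduli are of size $e^{-n \Gr_\Bz}$ up to algebraic factors in $n$, and the nondegeneracy $\Ba_1 \neq \BU(-\pi/2) \Ba_2$ is exactly what ensures that the leading harmonic coefficients of $\BA \nabla_\Bx \BGG(\Bx - \Bz)$ against the families $\{\BGvf_{3,n}\}, \{\BGvf_{4,n}\}$ do not vanish identically in $\Bz$.

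Inserting these estimates into Proposition~\ref{prop:solest} and performing a Laplace-type summation over $n$ near $n^{*}$, the resonant contribution scales as $\Gd^{-2} \cdot e^{-2 n^{*} (\Gr_\Bz - \Gr_0)} \cdot (\text{algebraic powers of } n^{*})$, which evaluates to $|\log \Gd|^3 \, \Gd^{-5/2 + \Gr_\Bz /(2 \Gr_0)}$ in the range $\Gr_0 < \Gr_\Bz \leq 5 \Gr_0$; the exponent~$3$ on $|\log \Gd|$ arises from the width of the resonance window together with the two algebraic $n$-factors entering through $(k_0 + k_{j,n})/p_n$ and through the asymptotics \eqnref{asympone}. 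For $\Gr_\Bz > 5 \Gr_0$, the projection decay outpaces the $\Gd^{-2}$ amplification, the sum converges absolutely uniformly in $\Gd$, and $E(\Bu_\Gd) \sim 1$ follows. The main obstacle I anticipate is the precise tracking of the two near-degenerate sequences $k_{3,n}$ and $k_{4,n}$: they approach $-k_0$ from opposite sides at almost identical rates, so the contributions of $\BGvf_{3,n}$ and $\BGvf_{4,n}$ must be expanded to sufficient subleading order to rule out cancellation, and it is exactly at this step that the hypothesis $\Ba_1 \neq \BU(-\pi/2) \Ba_2$ is used.
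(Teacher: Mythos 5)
Your plan is the right one and matches the paper's: the paper's own proof of this theorem is literally the proof of Theorem~\ref{thm:lambda_infty_blowup} rerun with the asymptotics of $k_{3,n},k_{4,n}$ near $-k_0$, i.e.\ \eqnref{1111}--\eqnref{2222}, replacing those near $k_0$. You correctly identify that only the families $j=3,4$ resonate, that $|k_\Gd(c)-k_{j,n}|^2 \approx \Gd^2+|k(c)-k_{j,n}|^2$ with $|k(c)-k_{j,n}|$ now of size (a power of $n$ times) $e^{-4n\Gr_0}$, that the critical index is $n^*\sim|\log\Gd|/(4\Gr_0)$, and that the energy is $\sum_n |\Ga_{j,n}|^2/|k_\Gd(c)-k_{j,n}|^2$. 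Inserting $e^{-2n^*(\Gr_\Bz-\Gr_0)}=\Gd^{-1/2+\Gr_\Bz/(2\Gr_0)}$ and $\Gd^{-2}$ does give the claimed $\Gd$-exponent, so the skeleton is sound.

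However, your bookkeeping of the logarithmic factor is off in a way that would lead you astray if you tried to fill in the details. The extra $|\log\Gd|^3$ is simply $(n^*)^3$, and the $n^3$ lives entirely in the numerator: it is \eqnref{eq:alpha_3+alpha_4}, $|\Ga_{3,n}|^2+|\Ga_{4,n}|^2\approx n^3 e^{-2n(\Gr_\Bz-\Gr_0)}$. Tracing that $n^3$: one factor of $n$ comes from $\|\BGvf_{3,n}\|_*^{-2}=\|\BGvf_{4,n}\|_*^{-2}\approx n/(\pi\Gk\Ga_2)$ via \eqnref{norm_varphi_3}--\eqnref{norm_varphi_4}, and the factor $n^2$ comes from the explicit prefactor $n$ in $(\BA\nabla)^T\BS[\Bpsi_{3,n}]$ and $(\BA\nabla)^T\BS[\Bpsi_{4,n}]$ in \eqnref{nabla,S-3}--\eqnref{nabla,S-4}. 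In contrast, the coefficient $(k_0+k_{j,n})/p_n$ that appears in $\BGvf_{3,n},\BGvf_{4,n}$ is a \emph{small} perturbation for $j=3,4$ (it is $O(n^a e^{-2n\Gr_0})$ because $k_0+k_{j,n}\to 0$ and $p_n\sim e^{-2n\Gr_0}$), so it does not contribute any $n$-factor to the leading term of $\Ga_{j,n}$. Likewise the "width of the resonance window" is $O(1)$ in $n$ (the denominator $n^{2a}e^{-8n\Gr_0}$ drops below $\Gd^2$ in a bounded number of steps past $n^*$), so it contributes nothing to the power of $|\log\Gd|$. Your attribution of the exponent~$3$ to these sources is incorrect.

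The concern you raise about ``cancellation'' between the contributions of $\BGvf_{3,n}$ and $\BGvf_{4,n}$ is also misplaced. The energy formula \eqnref{EBu10} is a sum of non-negative terms, so no cancellation can occur between them. The hypothesis $\Ba_1\ne\BU(-\pi/2)\Ba_2$ (equivalently $\Ba_1+\BU(\pi/2)\Ba_2\ne 0$) enters through the exact identity
\begin{align*}
&\left|\wBU(\Bz)\bigl(\Ba_1+\BU(\pi/2)\Ba_2\bigr)\cdot\BU(n\Go)\Bb(\Bz)\right|^2
+\left|\wBU(\Bz)\bigl(\BU(-\pi/2)\Ba_1+\Ba_2\bigr)\cdot\BU(n\Go)\Bb(\Bz)\right|^2\\
&\qquad=\left|\wBU(\Bz)\bigl(\Ba_1+\BU(\pi/2)\Ba_2\bigr)\right|^2|\Bb(\Bz)|^2,
\end{align*}
where $\wBU(\Bz)$ and $\BU(-\pi/2)$ commute (both are combinations of rotations). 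Together with Lemma~\ref{lem:nonsin} (non-singularity of $\wBU(\Bz)$ off the ellipse) this guarantees that $|\Ga_{3,n}|^2+|\Ga_{4,n}|^2$ achieves the full size $n^3 e^{-2n(\Gr_\Bz-\Gr_0)}$ for every $n$; it is \emph{not} used to expand $k_{3,n}$ and $k_{4,n}$ to higher order. With these two points corrected, the rest of your outline is the proof.
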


\begin{theorem} \label{thm:solution_bound_2}
  Assume $k (c) = - k_0$.
  Let $\Bx = (\Gr, \Go) \in \Rbb^2$ in the elliptic coordinates.
  Then it holds for all $\Bx$ satisfying $\Gr + \Gr_{\Bz} - 6 \Gr_0 > 0$ that
  \beq
  | \Bu_\Gd(\Bx) - \BF_{\Bz}(\Bx) | \lesssim \sum_{n = 1}^\infty n e^{- n \left( \Gr + \Gr_{\Bz} - 6 \Gr_0 \right)}.
  \eeq
In particular, for any $\ol{\Gr} > 6 \Gr_0 - \Gr_{\Bz}$ there exists some $C = C_{\ol{\Gr}} > 0$ such that
  \beq
  \sup_{\Gr \ge \ol{\Gr}} | \Bu_\Gd(\Bx) - \BF_{\Bz}(\Bx) | < C.
  \eeq
\end{theorem}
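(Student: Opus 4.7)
\smallskip
\noindent\textbf{Proof plan.} The plan is to start from the integral representation \eqref{eq:soltion_representation}, which gives $\Bu_\Gd(\Bx) - \BF_\Bz(\Bx) = \BS[\BGvf_\Gd](\Bx)$, and then substitute the spectral expansion \eqref{BGvfGd}. Since $\GO$ is an ellipse, Section \ref{subsec:disk} provides an explicit orthogonal basis of eigenfunctions $\BGvf_{j,n}$ of $\BK^*$, together with the asymptotics \eqref{asympone} for the eigenvalues. Because $k(c)=-k_0$, only the families $k_{3,n}$ and $k_{4,n}$ approach $k(c)$, while $k_{1,n},k_{2,n}$ accumulate at $k_0$ and therefore contribute terms bounded uniformly in $\Gd$; these "far" modes can be absorbed into $\BF_\Bz$-type quantities and will not drive the bound. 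So the task reduces to bounding the sums over $n$ of the two families $\BGvf_{3,n},\BGvf_{4,n}$.

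The next step is to make the three ingredients in each summand explicit in the elliptic coordinates. (i) The denominator: by \eqref{asympone}, $|k_0+k_{3,n}|,|k_0+k_{4,n}|\sim n\,e^{-4n\Gr_0}$, hence $|k_\Gd(c)-k_{j,n}|\gtrsim \max\bigl(\Gd,\,n\,e^{-4n\Gr_0}\bigr)$ for $j=3,4$, which under the assumption $\Gr+\Gr_\Bz-6\Gr_0>0$ will turn out to be the regime where the bound is governed by $n\,e^{-4n\Gr_0}$. (ii) The coefficient $(\BGvf_{j,n},\p_\Gv\BF_\Bz)_*$: using $(\cdot,\cdot)_*=-\la\cdot,\wBS[\cdot]\ra$ and the fact that $\BF_\Bz$ is a dipole, this pairing reduces to evaluating $\BS[\BGvf_{j,n}]$ (or a first derivative thereof) at $\Bz$, which by the standard elliptic-harmonic expansion decays like $e^{-n\Gr_\Bz}$, times the polynomial prefactors that come from the structure \eqref{BGvfone}. (iii) The value $\BS[\BGvf_{j,n}](\Bx)$ at a point with elliptic coordinate $\Gr>\Gr_0$, which carries the factor $e^{-n\Gr}$. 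Multiplying the three contributions and summing over $n$ produces a series of the claimed form.

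The crucial accounting, which also explains why the threshold is $6\Gr_0$ (rather than $4\Gr_0$ as in Theorem \ref{thm:solution_bound_1}) and why the prefactor is $n$ (rather than $1/n$), lies in the decomposition \eqref{BGvfone}: the amplitude of $\Bpsi_{1,n}$ inside $\BGvf_{3,n}$ is $(k_0+k_{3,n})/p_n$, which by $k_0+k_{3,n}\sim n\,e^{-4n\Gr_0}$ and $p_n\sim e^{-2n\Gr_0}$ is of order $n\,e^{-2n\Gr_0}$, large compared to the $\Bpsi_{3,n}$ component. This "large" component is what gets paired against the dipole data and what is propagated by $\BS$, and it injects an extra factor $e^{2n\Gr_0}$ on each side (once in the coefficient and once in the single layer potential evaluated outside), producing the combined shift from $4\Gr_0$ to $6\Gr_0$. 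After normalizing the eigenfunctions in the $(\cdot,\cdot)_*$ norm, which can be done directly from the action of $\BS$ on $\Bpsi_{i,n}$, one collects the pieces to obtain $|\Bu_\Gd(\Bx)-\BF_\Bz(\Bx)|\lesssim \sum_{n\ge1} n\,e^{-n(\Gr+\Gr_\Bz-6\Gr_0)}$. The last assertion of the theorem is immediate since the geometric series converges uniformly in $\Gr\ge\ol\Gr$ whenever $\ol\Gr+\Gr_\Bz>6\Gr_0$.

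\smallskip
\noindent\textbf{Main obstacle.} The delicate point is the bookkeeping of the normalization of $\BGvf_{3,n},\BGvf_{4,n}$ in the $(\cdot,\cdot)_*$-inner product, because the two components $\Bpsi_{1,n},\Bpsi_{3,n}$ (resp.\ $\Bpsi_{2,n},\Bpsi_{4,n}$) are of very different magnitudes and the cross-pairings through $\BS$ need to be controlled quantitatively in order to justify the claim that the $\Bpsi_{1,n}$-component is truly the dominant one. Equivalently, one has to verify that the apparent cancellations between the two components in the linear combination do not destroy the $e^{2n\Gr_0}$ amplification sketched above; this requires computing $\BS[\Bpsi_{i,n}]$ on $\p\GO$ explicitly in elliptic coordinates and tracking the leading and subleading terms as $n\to\infty$, much as is done in the derivations collected in Appendix \ref{appendix2D}.
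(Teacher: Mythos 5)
Your overall framework is the right one and matches what the paper does: write $\Bu_\Gd-\BF_\Bz=\BS[\BGvf_\Gd]$, expand $\BGvf_\Gd$ in the eigenbasis, note that for $k(c)=-k_0$ the small denominators come only from $j=3,4$ (with $|k(c)-k_{j,n}|\gtrsim n\,e^{-4n\Gr_0}$ while $|k(c)-k_{j,n}|\approx 1$ for $j=1,2$, which is exactly \eqref{1111}--\eqref{2222}), drop $\Gd$ from the denominator, and insert the size estimates for $\Ga_{j,n}$, $\|\BGvf_{j,n}\|_*$ and $\BS[\BGvf_{j,n}](\Bx)$; the paper itself says the proof is obtained from that of Theorem \ref{thm:solution_bound_1} by this role-swap. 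The closing remark about uniform convergence of the series for $\Gr\ge\ol\Gr$ is also fine.

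However, your ``crucial accounting'' paragraph contains a genuine factual error that would derail the computation if carried out as written. You assert that the $\Bpsi_{1,n}$-amplitude $(k_0+k_{3,n})/p_n\sim n\,e^{-2n\Gr_0}$ inside $\BGvf_{3,n}$ is \emph{large compared to the $\Bpsi_{3,n}$-component}, and that this component is the one that gets paired against the dipole and pushed through $\BS$, producing the $e^{2n\Gr_0}$ shift on each side. This is backwards: since $\Gr_0>0$, $n\,e^{-2n\Gr_0}\to0$, so the $\Bpsi_{1,n}$ piece is a \emph{small} correction and $\BGvf_{3,n}\approx\Bpsi_{3,n}$ for large $n$. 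Consequently the quantities $\BS[\BGvf_{3,n}](\Bx)$ and $(\BA\nabla)^T\BS[\BGvf_{3,n}](\Bz)$ are dominated by the $\Bpsi_{3,n}$-contributions, which is precisely what \eqref{psibound34} and \eqref{nabla,S-3} give (order $e^{-n(\Gr-\Gr_0)}$ and $n\,e^{-n(\Gr_\Bz-\Gr_0)}$ respectively); using the $\Bpsi_{1,n}$ term instead would underestimate both by a factor $e^{-2n\Gr_0}$. The shift from $4\Gr_0$ to $6\Gr_0$ relative to Theorem \ref{thm:solution_bound_1} comes \emph{only} from the denominator $|k(c)-k_{3,n}|\sim n\,e^{-4n\Gr_0}$ (versus $n\,e^{-2n\Gr_0}$ in the $k(c)=k_0$ case), a fact you already stated correctly in step (i); it has nothing to do with the $\Bpsi_{1,n}$-mixing inside the eigenfunction. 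Likewise the prefactor $n$ (instead of $1/n$) arises because $|\BS[\Bpsi_{3,n}]|$ and $|(\BA\nabla)^T\BS[\Bpsi_{3,n}]|$ each carry an extra factor $n$ compared to the $\Bpsi_{1,n}$-quantities, not from the mixing coefficient. So the mechanism you single out as the ``crucial accounting'' and as the ``main obstacle'' is the wrong one, and a proof built on the claim that $\Bpsi_{1,n}$ dominates $\BGvf_{3,n}$ would rest on a false inequality.
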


Theorem \ref{thm:-lambda_infty_blowup} and Theorem \ref{thm:solution_bound_2} show that cloaking by anomalous localized resonance occurs when $k(c)=k_0$.  In fact, \eqnref{EBuGk} shows that $\Gd E(\Bu_\Gd) \to \infty$ if $\Gr_\Bz \le 2 \Gr_0$ and $\Gd E(\Bu_\Gd) \to 0$ if $\Gr_\Bz > 2 \Gr_0$. On the other hand, \eqnref{Gkbound} shows that $\Bu_\Gd$ bounded if $\Gr > 4 \Gr_0 - \Gr_{\Bz}$. So, if we normalize the solution by $\Bv_\Gd:= \frac{1}{\Gd E(\Bu_\Gd)} \Bu_\Gd$ so that $\Gd E(\Bv_\Gd) =1$, then $\Bv_\Gd \to 0$ in $\Gr > 4 \Gr_0 - \Gr_{\Bz}$ provided that $\Gr_\Bz \le 2 \Gr_0$. So, cloaking by anomalous localized resonance occurs and the cloaking region is $\Gr_0 < \Gr_\Bz \le 2 \Gr_0$. It is worth mentioning that this cloaking region coincides with that for the dielectric case (Laplace equation) obtained in \cite{AK14}.

Theorem \ref{thm:lambda_infty_blowup} and Theorem \ref{thm:solution_bound_1} clearly show that cloaking by anomalous localized resonance occurs when $k(c)=-k_0$, and in this case the cloaking region is $\Gr_0 < \Gr_\Bz \le 3 \Gr_0$. It is interesting to observe that the cloaking region is different from that for the case $k(c)=k_0$.

Proofs of above theorems are given in Appendix \ref{app:C}.

\section*{Acknowledgement}
We would like to thank Graeme W. Milton for pointing out to us existence of references \cite{KM-JMPS-14} and \cite{LLBW-Nature-01}.

\appendix

\section{Derivation of spectrum on disks and ellipses}\label{appendix2D}

The purpose of this section is to derive spectrum of the NP operator on disks and ellipses presented in subsections \ref{subsec:disk}.
We use complex representations of the displacement vector and traction. So, we identify $\psi = \psi_1 + i \psi_2 \in \mathbb{C}$ with the vector $\Bpsi=(\psi_1, \ \psi_2)^T$, and denote
$$
\BS[\psi] = (\BS [\Bpsi])_1 + i (\BS [\Bpsi])_2.
$$

Suppose that $\GO$ is simply connected domain in $\Rbb^2$ (bounded or unbounded), and let $\psi = \psi_1 + i \psi_2 \in H^{-1/2}(\p\GO)$. It is known (see \cite{AK-book-07, Musk-book}) that there are holomorphic functions $f$ and $g$ in $\GO$ (or in $\Cbb \setminus \ol{\GO}$) such that
\beq\label{singlecomp}
2 \Gm \BS [\psi] (z) = \Gk f(z) - z \ol{f'(z)} - \ol{g(z)}, \quad \Gk = \frac{\Gl + 3 \Gm}{\Gl + \Gm}
\eeq
in $\GO$ (or in $\Cbb \setminus \ol{\GO}$), and the conormal derivative $\p_\Gv \Bu$ is represented as
\beq\label{tractioncomplex}
\p_\Gv \BS [\psi]|_{+}  d\Gs = \left(\left(\p_\Gv \BS [\psi] \right)_1 + i \left(\p_\Gv \BS [\psi] \right)_2\right)|_{+} d\Gs = - i d \Big[f(z) + z \ol{f'(z)} + \ol{g(z)}\Big],
\eeq
where $d\Gs$ is the line element of $\p \GO$ and $d$ is the exterior derivative, namely, $d = (\p/\p z) dz + (\p/\p \bar z) d \bar z$. Here $\p \GO$ is positively oriented and $f'(z) = \p f(z) /\p z$.
Moreover, it is shown that $f$ and $g$ are obtained by
\begin{align}
f(z) & = \frac{\Gm \Ga_2}{2\pi} \int_{\p \GO} \ln (z-\Gz) \psi(\Gz) d \Gs(\Gz), \label{fdef} \\
g(z) & = - \frac{\Gm \Ga_1}{2\pi} \int_{\p \GO} \ln (z-\Gz) \ol{\psi(\Gz)} d \Gs(\Gz) - \frac{\Gm \Ga_2}{2\pi} \int_{\p \GO} \frac{\ol{\Gz} \psi(\Gz)}{z-\Gz} d\Gs(\Gz). \label{gdef}
\end{align}
It is worth mentioning that above integrals are well defined for $\psi$ satisfying $\int_{\p\GO} \psi d\Gs=0$. If $\psi$ is constant, then we take a proper branch cut of $\log(z-\Gz)$ for $z \in \Cbb \setminus \ol{\GO}$.

Let
\beq\label{Lcaldef}
\Lcal[\psi](z):= \frac{1}{2\pi} \int_{\p \GO} \ln (z-\Gz) \psi(\Gz) d \Gs(\Gz)
\eeq
so that
\beq\label{fgcomp}
f(z)= \Gm \Ga_2 \Lcal[\psi](z), \quad g(z) = -\Gm \Ga_1 \Lcal[\ol{\psi}](z) - \Gm\Ga_2 \Lcal[\ol{\Gz} \psi]'(z).
\eeq
So, \eqnref{singlecomp} can be rewritten as
\beq\label{singlecomp2}
2 \BS [\psi] (z) = \Gk \Ga_2 \Lcal[\psi](z) - \Ga_2 z \ol{\Lcal[\psi]'(z)} + \Ga_1 \ol{\Lcal[\ol{\psi}](z)}
+ \Ga_2 \ol{\Lcal[\ol{\Gz} \psi]'(z)} .
\eeq

Observe that
$$
d \Big[f(z) + z \ol{f'(z)} + \ol{g(z)}\Big] = (f'(z)+ \ol{f'(z)}) dz + (z \ol{f''(z)} + \ol{g'(z)}) d\bar{z}.
$$
So we define
\beq
\Ccal[\psi](z):= \Lcal[\psi]'(z) =\frac{1}{2\pi} \int_{\p \GO} \frac{\psi(\Gz)}{z-\Gz} d\Gs(\Gz),
\eeq
then we have
\begin{align}
\p_\Gv \BS [\psi]|_{+}  d\Gs
= - i \mu\Ga_2 \left[ \Ccal[\psi] + \ol{\Ccal[\psi]} \right] dz
 + i \left[ \mu\Ga_1 \ol{\Ccal[\ol{\psi}]} - \mu\Ga_2 \left( z \ol{\Ccal[\psi]'} - \ol{\Ccal[\ol{\Gz} \psi]'} \right) \right] d\bar{z}.
\label{tractioncomplex2}
\end{align}
We shall compute $\BS [\psi]$ and $\p_\Gv \BS [\psi]|_{+}$ for proper basis functions $\psi$.

\subsection{Disks}\label{appen:disk}

Since the spectrum of the NP operator is invariant under translation and scaling (and rotation), we may assume that $\GO$ be a unit disk. Let $\psi=(\psi_1, \psi_2)^T$ be one of the following functions
\beq
\begin{bmatrix} \cos n\Gt \\ \pm \sin n\Gt \end{bmatrix}, \quad
\begin{bmatrix} \sin n\Gt \\ \pm \cos n\Gt \end{bmatrix}, \quad n = 0,1,2, \ldots,
\eeq
or equivalently (after identifying with $\psi = \psi_1+i\psi_2$)
\beq
\Gb e^{in\Gt}, \quad n=0, \pm 1, \pm 2, \ldots,
\eeq
where $\Gb$ is either $1$ or $i$.

One can see from \eqnref{appendixA} and \eqnref{j-single} that
\beq
\BK^*[\Bc]= \frac{1}{2} \Bc
\eeq
for any constant vector $\Bc$.

If $\psi= \Gb e^{in\Gt}$ with $n \neq 0$, then one can easily see that
for $|z|>1$
\beq\label{disk1}
\Ccal[\psi](z) =
\begin{cases}
0 \quad & \mbox{for } n \geq 1,\\
\Gb z^{n-1} \quad & \mbox{for } n \leq -1.
\end{cases}
\eeq
Since $\ol{\Gz^n}=\Gz^{-n}$ for $|\Gz|=1$, we have
\beq\label{disk2}
\Ccal[\ol{\psi}](z)
= \begin{cases} \ol{\Gb} z^{-n-1} \quad &\mbox{for } n \geq 1,\\
0 & \mbox{for } n \leq -1.
\end{cases}
\eeq
One can also see that
\beq\label{disk3}
\Ccal [\ol{\Gz} \psi](z)  = \begin{cases} 0 \quad & \mbox{for } n \geq 2,\\
\Gb z^{n-2} \quad & \mbox{for } n = 1 \quad \mbox{or} \quad n \leq -1.\end{cases}
\eeq

Using \eqnref{disk1}-\eqnref{disk3} we can show that the NP eigenvalues on the disk are given by \eqnref{eigendisk} and corresponding eigenfunctions by \eqnref{vectordisk1}-\eqnref{vectordisk4}. In fact, one can see from \eqnref{tractioncomplex2} and  \eqnref{disk1}-\eqnref{disk3} that
$$
\left(\left(\p_\Gv \BS [\Gb\psi_n] \right)_1 + i \left(\p_\Gv \BS [\Gb\psi_n] \right)_2\right)
= \begin{cases}
\Gm \Ga_1 \Gb \psi_n \quad &\mbox{if } n \ge 2, \\
\Gm \Ga_1 \Gb \psi_1 - \Gm \Ga_2 \ol{\Gb} \psi_1 \quad &\mbox{if } n = 1, \\
\Gm \Ga_2 \Gb \psi_n \quad &\mbox{if } n \le -1,
\end{cases}
$$
where $\psi_n (\Gz)= \Gz^n$. Therefore, we have
\beq\label{Kstardisk}
\BK^* [\Gb\psi_n]
= \begin{cases}
(\Gm \Ga_1 -1/2) \Gb \psi_n \quad &\mbox{if } n \ge 2, \\
(\Gm \Ga_1 -1/2) \Gb \psi_1 - \Gm \Ga_2 \ol{\Gb} \psi_1 \quad &\mbox{if } n = 1, \\
(\Gm \Ga_2 -1/2) \Gb \psi_n \quad &\mbox{if } n \le -1.
\end{cases}
\eeq
Since $\Gm \Ga_1 -1/2 = k_0$ and $\Gm \Ga_2 -1/2 = -k_0$, \eqnref{Kstardisk} shows that the spectrum is as presented in subsection \ref{subsec:disk}. It is helpful to mention that $\Gb=1$ and $n=1$ in \eqnref{Kstardisk} yield the second eigenvalue in \eqnref{eigendisk} and corresponding eigenfunction.

\subsection{Ellipses}\label{appen:ell}

Suppose that $\GO =\{ (\Gr, \Go)  : \Gr < \Gr_0\}$ in elliptic coordinates as in subsection \ref{subsec:disk}. Let $(\Gr, \eta)$ be the elliptic coordinate of $z \in \Cbb \setminus \ol{\GO}$ and $(\Gr_0, \Go)$ that of $\Gz \in \p\GO$ so that
\beq
z=R\cosh (\rho + i\eta), \quad \Gz=R\cosh (\rho_0 + i\Go),
\eeq
where $R=\sqrt{a^2 - b^2}$.
It is known (see, for example, \cite{AK14}) that
\begin{align*}
\frac{1}{2\pi} \ln |z- \Gz| & = - \sum_{m=1}^{\infty} \frac{1}{m \pi} (\cosh m\rho_0 \cos m\Go e^{-m \rho} \cos m\eta + \sinh m\rho_0 \sin m\Go e^{-m\rho} \sin m\eta)\\
 & \quad + \frac{1}{2\pi} \left(\rho + \ln \left(\frac{R}{2}\right)\right), \qquad \quad \mbox{for } \rho_0 < \rho.
\end{align*}
It is convenient to write $\xi=\Gr+i\eta$ so that
\beq\label{expansion}
\frac{1}{2\pi}\ln (z-\Gz) = - \sum_{m=1}^{\infty} \frac{1}{m \pi} \cosh m(\Gr_0 + i\Go)e^{- m \xi} + \frac{1}{2\pi} \left(\xi + C \right)
\eeq
for some constant $C$ whose real part is $\ln(R/2)$.

Let $\Bpsi_{j,n}$ be functions defined in \eqnref{Bpsidef}. After complexification, they can be written as
$$
\psi = \Gb h_0(\Go)^{-1} \psi_n (\Go)
$$
where $\Gb$ is either 1 or $i$, and $\psi_n(\Go)= e^{in\Go}$. In fact, we have
\beq\label{psijm}
\begin{cases}
\psi= \Bpsi_{1, n} \mbox{ and } \ol{\psi}= \Bpsi_{3, n} \quad &\mbox{if $\Gb = 1$ and $n \ge 1$}, \\
\psi= \Bpsi_{3, -n} \mbox{ and } \ol{\psi}= \Bpsi_{1, -n} \quad &\mbox{if $\Gb = 1$ and $n \le -1$}, \\
\psi= \Bpsi_{2, n} \mbox{ and } \ol{\psi}= -\Bpsi_{4, n} \quad &\mbox{if $\Gb = i$ and $n \ge 1$}, \\
\psi= \Bpsi_{4, -n} \mbox{ and } \ol{\psi}= -\Bpsi_{2, -n} \quad &\mbox{if $\Gb = i$ and $n \le -1$}.
\end{cases}
\eeq

Let us compute $\Lcal[h_0^{-1} \psi_n]$.
Since $d\Gs(\Go)= h_0(\Go)d\Go$ on $\p\GO$, we have
\begin{align*}
\Lcal[h_0^{-1} \psi_n](z) & = \frac{1}{2\pi} \int_0^{2\pi} \ln (z-\Gz) \psi_n(\Go) d \Go \\
& = - \sum_{m=1}^{\infty} \frac{e^{- m \xi}}{m \pi}\int_0^{2\pi}  \cosh m(\Gr_0 + i\Go)e^{in\Go} d \Go + \frac{1}{2\pi} \left(\xi + C \right) \int_0^{2\pi} e^{in\Go} d \Go.
\end{align*}
Thus we have for $n \neq 0$
\beq\label{Ln}
\Lcal[h_0^{-1} \psi_n](z)  =
- \frac{e^{-|n|\xi-n\Gr_0}}{|n|} .
\eeq
We also obtain
\beq\label{Lnbar}
\Lcal[h_0^{-1} \ol{\psi_n}](z)  =
- \frac{e^{-|n|\xi+n\Gr_0}}{|n|}  ,
\eeq

Since $\p z/\p \xi= R \sinh \xi$, we have
\beq\label{Cn}
\Ccal[h_0^{-1} \psi_n](z) = \Lcal[h_0^{-1} \psi_n]'(z)  =
\frac{e^{-|n|\xi-n\Gr_0}}{R \sinh \xi} .
\eeq
and
\beq\label{Cnbar}
\Ccal[h_0^{-1} \ol{\psi_n}](z) = \Lcal[h_0^{-1} \ol{\psi_n}]'(z)  =
\frac{e^{-|n|\xi+n\Gr_0}}{R \sinh \xi} .
\eeq

Since $\Gz = R \cosh (\Gr_0 + i \Go)$ on $\p\GO$, we have
$$
\ol{\Gz} \psi_n(\Gz) = \frac{R}{2}\left[e^{\Gr_0} \psi_{n-1}(\Go) + e^{-\Gr_0} \psi_{n+1}(\Go)\right] ,
$$
and hence
$$
\Lcal[\ol{\Gz} h_0^{-1} \psi_n](z) = \frac{R}{2} e^{\Gr_0} \Lcal[h_0^{-1} \psi_{n-1}](z) + \frac{R}{2} e^{-\Gr_0} \Lcal[h_0^{-1} \psi_{n+1}](z).
$$
It then follows from \eqnref{Cn} and \eqnref{Cnbar} that
\begin{align}
\Ccal[\ol{\Gz} h_0^{-1} \psi_n](z) = \Lcal[\ol{\Gz} h_0^{-1} \psi_n]'(z) & =
\frac{e^{-|n-1|\xi-(n-2)\Gr_0} + e^{-|n+1|\xi-(n+2)\Gr_0}}{2\sinh \xi} \nonumber \\
&= \begin{cases}
\ds \frac{e^{-n(\xi+\Gr_0)} \cosh(\xi+2\Gr_0)}{\sinh \xi} \quad &\mbox{if } n \ge 1 , \\
\ds \frac{e^{n(\xi-\Gr_0)} \cosh(\xi-2\Gr_0)}{\sinh \xi} \quad &\mbox{if } n \le -1.
\end{cases}
\label{Lnbarprime}
\end{align}

Let $\psi= \Gb h_0^{-1} \psi_n$ where $n \neq 0$ and $\Gb=1, i$. According to \eqnref{singlecomp2}, we have
\begin{align*}
2 \BS [\psi] (z) = \Gb \Gk \Ga_2 \Lcal[h_0^{-1} \psi_n](z) + \ol{\Gb} \Ga_2 \left( \ol{\Lcal[\ol{\Gz} h_0^{-1} \psi_n]'(z)} - z \ol{\Lcal[h_0^{-1} \psi_n]'(z)} \right) + \Gb\Ga_1 \ol{\Lcal[h_0^{-1} \ol{\psi_n}](z)} .
\end{align*}
In view of \eqnref{Cn} and \eqnref{Lnbarprime} we have
\beq
\ol{\Lcal[\ol{\Gz} h_0^{-1} \psi_n]'(z)} - z \ol{\Lcal[h_0^{-1} \psi_n]'(z)}
= \begin{cases}
\ds \frac{e^{-n(\ol{\xi}+\Gr_0)} [\cosh(\ol{\xi}+2\Gr_0) - \cosh \xi] }{\sinh \ol{\xi}}, \quad n \ge 1, \\
\ds \frac{e^{n(\ol{\Gx} - \Gr_0)} [\cosh (\ol{\Gx} - 2 \Gr_0) - \cosh \Gx]}{\sinh \ol{\Gx}}, \quad n \le -1.
\end{cases}\nonumber
\eeq
Thus, we obtain
\beq\label{single-ellipse}
2 \BS [\psi] (z) =
\begin{cases}
\ds - \frac{\Gb (\Gk \Ga_2 e^{-n(\Gx + \Gr_0)} + \Ga_1 e^{-n(\ol{\Gx} - \Gr_0)})}{n} \\
\ds \quad\quad + \frac{\ol{\Gb} \Ga_2 e^{-n(\ol{\Gx} + \Gr_0)} [\cosh (\ol{\Gx} + 2\Gr_0) - \cosh \Gx]}{\sinh \ol{\Gx}}, \quad n \ge 1, \\
\ds \frac{\Gb (\Gk \Ga_2 e^{n(\Gx - \Gr_0)} + \Ga_1 e^{n(\ol{\Gx} + \Gr_0)})}{n} \\
\ds \quad\quad + \frac{\ol{\Gb} \Ga_2 e^{n(\ol{\Gx} - \Gr_0)} [\cosh (\ol{\Gx} - 2 \Gr_0) - \cosh \Gx]}{\sinh \ol{\Gx}}, \quad n \le -1.
\end{cases}
\eeq
Let
\beq\label{hGr}
h_\Gr(\Go)= h(\Gr, \Go) := R \sqrt{\sinh^2 \Gr + \sin^2 \Go}.
\eeq
Using the identity
\begin{equation*}
\sinh (\Gr - i \Gn) \sinh (\Gr + i \Gn) = \sinh^2 \Gr + \sin^2 \Gn = \frac{h_{\Gr}^2}{R^2},
\end{equation*}
we obtain the following lemma.

\begin{lemma}\label{ap-prop}
The single layer potentials outside the ellipse, $\Gr \ge \Gr_0$, are computed as follows: for $\Gb = 1$ or $i$ and $\Gy_n (\Go) = e^{in\Go}$, $n=1,2, \ldots$,
\begin{align*}
& \BS [\Gb h_0^{-1} \Gy_n] (z) = - \frac{\Gb}{2n}\left[\Gk \Ga_2 e^{-n(\Gr + \Gr_0)} e^{-i n \Gn} + \Ga_1 e^{-n(\Gr - \Gr_0)} e^{in\Gn}\right]\\
 & \quad + \frac{\ol{\Gb} \Ga_2 R^2}{4 h_{\Gr}^2} e^{-n(\Gr + \Gr_0)} \left[e^{in\Gn} \sinh 2 (\Gr + \Gr_0) + \frac{e^{-2\Gr_0} - e^{2\Gr}}{2}  e^{i(n+2)\Gn} + \frac{e^{-2\Gr} - e^{2\Gr_0}}{2} e^{i(n-2)\Gn}\right], \\
& \BS [\Gb h_0^{-1} \Gy_{-n}] (z) = - \frac{\Gb}{2n}\left[\Gk \Ga_2 e^{-n(\Gr - \Gr_0)} e^{-in \Gn} + \Ga_1 e^{-n(\Gr + \Gr_0)} e^{in\Gn}\right]\\
& \quad + \frac{\ol{\Gb} \Ga_2 R^2}{4 h_{\Gr}^2} e^{-n(\Gr - \Gr_0)}\left[e^{in\Gn} \sinh 2 (\Gr - \Gr_0) + \frac{e^{2\Gr_0} - e^{2\Gr}}{2} e^{i(n+2)\Gn} + \frac{e^{-2\Gr} - e^{-2\Gr_0}}{2} e^{i(n-2)\Gn}\right],
\end{align*}
where $z = R\cosh(\Gr + i \Gn)$.
\end{lemma}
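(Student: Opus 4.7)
The lemma is a direct computation starting from equation \eqnref{single-ellipse}, which already expresses $2\BS[\psi](z)$ for $\psi = \Gb h_0^{-1}\Gy_n$ in terms of $\Gx = \Gr + i\Gn$ and $\ol{\Gx} = \Gr - i\Gn$. What remains is to reorganize the $\ol{\Gb}$-part of the right-hand side into a form that manifestly exhibits the factor $R^2/h_\Gr^2$ together with the harmonic modes $e^{in\Gn}$ and $e^{i(n\pm 2)\Gn}$. The plan is therefore purely algebraic: one handles the ``$\Gb$-terms'' by direct substitution, and the ``$\ol{\Gb}$-term'' by rationalising the quotient and applying product-to-sum identities.

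First I would treat the ``$\Gb$-terms''. Substituting $\Gx = \Gr + i\Gn$ into the first bracket of \eqnref{single-ellipse}, the factors $e^{-n(\Gx+\Gr_0)}$ and $e^{-n(\ol{\Gx}-\Gr_0)}$ split immediately as $e^{-n(\Gr+\Gr_0)}e^{-in\Gn}$ and $e^{-n(\Gr-\Gr_0)}e^{in\Gn}$, respectively; dividing by $2$ reproduces the first line of the asserted expression. Next, for the ``$\ol{\Gb}$-term'' I would multiply numerator and denominator of $[\cosh(\ol{\Gx}+2\Gr_0)-\cosh\Gx]/\sinh\ol{\Gx}$ by $\sinh\Gx$ and invoke the key identity
\[
\sinh\ol{\Gx}\,\sinh\Gx = \sinh(\Gr - i\Gn)\sinh(\Gr + i\Gn) = \sinh^2\Gr + \sin^2\Gn = h_\Gr^{2}/R^{2},
\]
which converts the denominator into $h_\Gr^{2}/R^{2}$ and thereby produces the overall factor $R^{2}/h_\Gr^{2}$ appearing in the statement.

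The core computation is to expand the resulting numerator. Using the product-to-sum identity $\cosh A\,\sinh B = \tfrac12[\sinh(A+B) - \sinh(A-B)]$, I would obtain
\[
\cosh(\ol{\Gx}+2\Gr_0)\sinh\Gx = \tfrac12\bigl[\sinh(2\Gr+2\Gr_0) - \sinh(2\Gr_0-2i\Gn)\bigr],
\quad \cosh\Gx\sinh\Gx = \tfrac12\sinh(2\Gr+2i\Gn).
\]
Writing the remaining two hyperbolic sines in exponential form and grouping by $e^{\pm 2i\Gn}$ yields
\[
[\cosh(\ol{\Gx}+2\Gr_0)-\cosh\Gx]\sinh\Gx = \tfrac12\sinh 2(\Gr+\Gr_0) + \tfrac14\Bigl[(e^{-2\Gr_0}-e^{2\Gr})e^{2i\Gn} + (e^{-2\Gr}-e^{2\Gr_0})e^{-2i\Gn}\Bigr].
\]
Multiplying by $e^{-n(\ol{\Gx}+\Gr_0)} = e^{-n(\Gr+\Gr_0)}e^{in\Gn}$ and dividing by $2$ then delivers the second line of the asserted formula for $n\ge 1$.

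The case $n\le -1$ is entirely parallel, starting from the second branch of \eqnref{single-ellipse}: one only needs to replace $2\Gr_0$ by $-2\Gr_0$ in the $\cosh$-argument, and repeat the same rationalisation and product-to-sum expansion. The computation is wholly elementary, so I do not expect any conceptual difficulty; the only genuine source of error is the bookkeeping of signs on the $\pm 2i\Gn$ exponents in the product-to-sum step and the matching of $e^{i(n\pm 2)\Gn}$ coefficients between the two branches. I would verify this by checking that, upon reversing $n\mapsto -n$, the $n\le -1$ formula is indeed the complex conjugate (in the appropriate sense) of the $n\ge 1$ formula, which acts as a consistency check on the final expression.
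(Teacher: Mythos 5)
Your proposal is correct and follows exactly the route the paper takes: the authors derive \eqref{single-ellipse}, then state the identity $\sinh(\Gr-i\Gn)\sinh(\Gr+i\Gn)=\sinh^2\Gr+\sin^2\Gn=h_\Gr^2/R^2$ and assert the lemma follows, leaving the rationalisation and product-to-sum expansion you carried out as the (omitted) algebra. Your intermediate identities and the final matching of coefficients of $e^{i(n\pm2)\Gn}$ are all correct.
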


As an immediate consequence of above lemma we see that there is a constant $C$ such that
\beq\label{psibound}
\left| \BS [\Bpsi_{j,n}] (z) \right| \le C \frac{e^{-n(\Gr - \Gr_0)}}{n}, \quad j=1,2,
\eeq
and
\beq\label{psibound34}
\left| \BS [\Bpsi_{j,n}] (z) \right| \le C e^{-n(\Gr - \Gr_0)}, \quad j=3,4,
\eeq
for all $n$.

If $z \in \p\GO$, namely, $\Gr=\Gr_0$, then we have
$$
\cosh \Gx = \cosh (\ol{\Gx} - 2\Gr_0).
$$
Thus we obtain for $z \in  \p \GO$
$$
2 \BS [\psi] (z) =
\begin{cases}
\ds -\Gb \frac{\Gk \Ga_2 e^{-2n\Gr_0}}{n} e^{- in\eta} - \Gb \frac{\Ga_1}{n} e^{in\eta}
+ \ol{\Gb} 2\Ga_2 \sinh 2\Gr_0 e^{-2n\Gr_0} e^{in\eta} \quad &\mbox{if } n \ge 1, \\
\ds \Gb \frac{\Gk \Ga_2 }{n} e^{in\eta} + \Gb \frac{\Ga_1 e^{2n\Gr_0}}{n} e^{-in\eta} \quad &\mbox{if } n \le -1,
\end{cases}
$$
which can be rephrased as
\beq
2 h_0^{-1} \BS [\psi] =
\begin{cases}
\ds -\frac{\Gb}{\ol{\Gb}} \frac{\Gk \Ga_2 e^{-2n\Gr_0}}{n} \ol{\psi} - \left(\frac{\Ga_1}{n}
- \frac{\ol{\Gb}}{\Gb} 2 \Ga_2 \sinh 2\Gr_0 e^{-2n\Gr_0} \right) \psi \quad &\mbox{if } n \ge 1, \\
\ds \frac{\Gk \Ga_2 }{n} \psi + \frac{\Gb}{\ol{\Gb}} \frac{\Ga_1 e^{2n\Gr_0}}{n} \ol{\psi} \quad &\mbox{if } n \le -1.
\end{cases}
\eeq
So we obtain the following lemma from \eqnref{psijm}.
\begin{lemma} \label{prop:single}
It holds that
\begin{align*}
h_0^{-1} (\Go) \BS [\Bpsi_{1,n}] (\Go) & =- \left( \frac{\Ga_1}{2 n} - \Ga_2 \sinh{2 \Gr_0} e^{- 2 n \rho_0} \right) \Bpsi_{1,n} - \frac{\Gk \Ga_2 e^{- 2 n \rho_0}}{2 n} \Bpsi_{3,n} , \\
h_0^{-1} (\Go) \BS [\Bpsi_{2,n}] (\Go) & = - \left( \frac{\Ga_1}{2 n} + \Ga_2 \sinh{2 \Gr_0} e^{- 2 n \Gr_0} \right) \Bpsi_{2,n} - \frac{\Gk \Ga_2 e^{- 2 n \Gr_0}}{2 n} \Bpsi_{4,n} ,\\
h_0^{-1} (\Go) \BS [\Bpsi_{3,n}] (\Go) & = - \frac{\Ga_1 e^{- 2 n \Gr_0}}{2 n}\Bpsi_{1,n} - \frac{\Gk \Ga_2}{2 n}\Bpsi_{3,n}, \\
h_0^{-1} (\Go) \BS [\Bpsi_{4,n}] (\Go) & = - \frac{\Ga_1 e^{- 2 n \Gr_0}}{2 n} \Bpsi_{2,n} - \frac{\Gk \Ga_2}{2 n} \Bpsi_{4,n}.
\end{align*}
\end{lemma}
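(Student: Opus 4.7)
The plan is to read off each of the four identities by specializing the general boundary formula that was already derived in the discussion immediately preceding the lemma statement. The pivotal observation is the identification \eqnref{psijm}, which writes each real vector-valued basis function $\Bpsi_{j,n}$ in the complex form $\psi = \Gb\, h_0^{-1}\psi_n$ where $\Gb \in \{1,i\}$ and $\psi_n(\Go)=e^{in\Go}$, with the scalar integer $n$ allowed to be positive or negative; simultaneously, $\ol{\psi} = \pm\, h_0^{-1}\psi_{-n}$ translates back to the appropriate $\Bpsi_{k,n}$ up to a sign depending on $\Gb$.

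First I would invoke the boundary trace formula
\[
2 h_0^{-1}\BS[\psi] = \begin{cases} -\dfrac{\Gb}{\ol{\Gb}}\,\dfrac{\Gk\Ga_2 e^{-2n\Gr_0}}{n}\,\ol{\psi} - \left(\dfrac{\Ga_1}{n} - \dfrac{\ol{\Gb}}{\Gb}\,2\Ga_2 \sinh 2\Gr_0\, e^{-2n\Gr_0}\right)\psi, & n\ge 1,\\[2mm] \dfrac{\Gk\Ga_2}{n}\,\psi + \dfrac{\Gb}{\ol{\Gb}}\,\dfrac{\Ga_1 e^{2n\Gr_0}}{n}\,\ol{\psi}, & n\le -1, \end{cases}
\]
displayed in the paragraph just before the lemma; it is obtained from Lemma \ref{ap-prop} by setting $\Gr=\Gr_0$ and using $\cosh\Gx = \cosh(\ol{\Gx}-2\Gr_0)$, which collapses the bracket $[\cosh(\ol{\Gx}+2\Gr_0)-\cosh\Gx]$ multiplying $\ol{\Gb}\Ga_2$ to $2\sinh 2\Gr_0$ for positive indices and to $0$ for negative indices.

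Next I would cycle through the four cases of \eqnref{psijm}. For $\Bpsi_{1,n}$ take $\Gb=1$ in the $n\ge 1$ branch: then $\Gb/\ol{\Gb}=1$ and $\ol{\psi}=\Bpsi_{3,n}$, yielding the first identity directly. For $\Bpsi_{2,n}$ take $\Gb=i$ in the same branch; now $\Gb/\ol{\Gb}=-1$ and $\ol{\psi}=-\Bpsi_{4,n}$, so the two sign flips cancel in the $\ol{\psi}$-coefficient while they reinforce in the $\psi$-coefficient, producing the telltale $+\Ga_2\sinh 2\Gr_0\, e^{-2n\Gr_0}$ that distinguishes the second identity from the first. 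For $\Bpsi_{3,n}$ and $\Bpsi_{4,n}$ with $n\ge 1$ I would use the $n\le -1$ branch with scalar index $-n$: after substituting $-n$ and matching $\ol{\psi}$ to $\Bpsi_{1,n}$ (respectively $-\Bpsi_{2,n}$), the third and fourth identities drop out. Dividing by $2$ in each case gives the statement.

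Since the genuinely analytical work — the logarithmic expansion \eqnref{expansion}, the computation of $\Lcal$ and $\Ccal$ on the basis functions in \eqnref{Ln}--\eqnref{Lnbarprime}, and the off-boundary formula \eqnref{single-ellipse} — has already been carried out, no real obstacle remains. The only point requiring attention is the sign bookkeeping of the ratio $\Gb/\ol{\Gb}=\pm 1$ coupled with the identity $\ol{h_0^{-1}e^{in\Go}}=h_0^{-1}e^{-in\Go}$, which for $\Gb=i$ introduces the extra minus sign in the identifications $\ol{\Bpsi_{2,n}}=-\Bpsi_{4,n}$ and $\ol{\Bpsi_{4,n}}=-\Bpsi_{2,n}$; this is purely mechanical once one lays the four cases side by side.
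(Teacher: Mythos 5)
Your proof is correct and follows the paper's own route exactly: restrict the off-boundary formula \eqnref{single-ellipse} to $\Gr=\Gr_0$ to get the displayed boundary trace identity, then run the four cases of \eqnref{psijm} (tracking $\Gb/\ol{\Gb}=\pm 1$ and the signs in $\ol{\psi}$, and replacing $n$ by $-n$ in the $n\le -1$ branch for $\Bpsi_{3,n},\Bpsi_{4,n}$) and divide by $2$. The sign bookkeeping you spell out, including the cancellation of the two minus signs in the $\ol{\psi}$-coefficient for $\Gb=i$ and the surviving sign change in the $\psi$-coefficient that produces the $+\Ga_2\sinh 2\Gr_0 e^{-2n\Gr_0}$ term in the second identity, all checks out.
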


Now we compute $\p_{\Gv} \BS[\Gy]|_{+}(\Go)$ using \eqnref{tractioncomplex2}. We obtain from \eqnref{Cn}
\beq\label{Cnder}
\Ccal [h_0^{-1} \Gy_n]' (z) = - \frac{e^{- |n| \Gx - n \Gr_0} (|n| \sinh \Gx + \cosh \Gx)}{R^2 \sinh^3 \Gx}.
\eeq
We also obtain from \eqnref{Lnbarprime}
\beq\label{Cnbarder}
\Ccal [\ol{\Gz} h_0^{-1} \Gy_n]'(z) =
\begin{cases}
\ds - \frac{e^{-n (\Gx + \Gr_0)} [n\cosh (\Gx + 2 \Gr_0) \sinh \Gx + \cosh 2 \Gr_0]}{R \sinh^3 \Gx}, \quad n \geq 1,\\
\ds - \frac{e^{n(\Gx - \Gr_0)} [-n \cosh (\Gx - 2 \Gr_0) \sinh \Gx + \cosh 2\Gr_0]}{R \sinh^3 \Gx}, \quad n \leq -1.
\end{cases}
\eeq

Let $\Gy = \Gb h_0^{-1} \Gy_n$. Since $d\Gs(z)= h_0(\Gn)d\Gn$, $dz = i R \sinh \Gx d \Gn$, and $d\ol{z} = - i R \sinh \ol\Gx d \Gn$ on $\p\GO$, we have from \eqnref{tractioncomplex2} that
$$
h_0(\Gn) \p_\Gv \BS [\psi]|_{+}
= R \mu\Ga_2 \sinh \Gx \left[ \Ccal[\psi] + \ol{\Ccal[\psi]} \right]
 + R \sinh \ol\Gx \left[\mu\Ga_1 \ol{\Ccal[\ol{\psi}]} - \mu\Ga_2 \left( z \ol{\Ccal[\psi]'} - \ol{\Ccal[\ol{\Gz} \psi]'} \right) \right] .
$$
We then obtain from \eqnref{Cn}, \eqnref{Cnbar}, \eqnref{Cnder}, and \eqnref{Cnbarder} (after tedious computations which we omit) that
\begin{align*}
\p_{\Gv} \BS [\Gy]|_{+} (\Gn) = \left[\Gm \Ga_1 \Gb - 2 n \Gm \Ga_2 \ol{\Gb} \sinh 2 \Gr_0 e^{-2n\Gr_0}\right] h_0^{-1} \Gy_n(\Gn) + \Gm \Ga_2 \Gb e^{-2n\Gr_0} h_0^{-1} \Gy_{-n}(\Gn),
\end{align*}
for $n \ge 1$. It is helpful to mention that the following identities are used:
\begin{align*}
\cosh \Gx \cosh \ol{\Gx} - \cosh 2 \Gr_0 = - \sinh \Gx \sinh \ol{\Gx}, \quad \cosh \Gx =  \cosh (\ol{\Gx} - 2 \Gr_0).
\end{align*}
It then follows from \eqnref{j-single} that
\begin{align}
\BK^* [\Gy](\Gn) & = \left[\left(\Gm \Ga_1 - \frac{1}{2}\right)\Gb - 2 n \Gm \Ga_2 \ol{\Gb} \sinh 2 \Gr_0 e^{-2n\Gr_0}\right] h_0^{-1} \Gy_n(\Gn) \nonumber \\
& \quad + \Gm \Ga_2 \Gb e^{-2n\Gr_0} h_0^{-1} \Gy_{-n}(\Gn). \label{BK+}
\end{align}

Similarly one can see for $n \leq - 1$ that
\begin{align*}
\p_{\Gv} \BS[\Gb h_0^{-1} \Gy_n]|_{+}(\Gn) = \Gm \Ga_1 \Gb e^{2 n \Gr_0} h_0^{-1} \Gy_{-n}(\Gn) + \Gm \Ga_2 \Gb h_0^{-1} \Gy_n(\Gn),
\end{align*}
and hence
\begin{align*}
\BK^*[\Gb h_0^{-1} \Gy_{n}](\Gn) = \Gm \Ga_1 \Gb e^{- 2 n \Gr_0} h_0^{-1} \Gy_{-n}(\Gn) +\left(\Gm \Ga_2 - \frac{1}{2}\right)\Gb h_0^{-1} \Gy_{n}(\Gn).
\end{align*}

Note that $\Gm \Ga_1 - \frac{1}{2} = k_0$. We then obtain from \eqnref{psijm} that
\begin{align}
\BK^*[\Bpsi_{1,n}] (\Go) & = \left(k_0 - 2 n \Gm \Ga_2 \sinh 2 \Gr_0 e^{-2n\Gr_0}\right) \Bpsi_{1,n} + \Gm \Ga_2 e^{-2n\Gr_0} \Bpsi_{3,n},\nonumber\\
\BK^*[\Bpsi_{3,n}] (\Go) & = \Gm \Ga_1 e^{- 2 n \Gr_0} \Bpsi_{1,n} - k_0 \Bpsi_{3,n} \label{1,3}
\end{align}
with $\Gb = 1$, and
\begin{align}\label{2,4}
\BK^* [\Bpsi_{2,n}](\Go) & = \left[k_0 + 2 n \Gm \Ga_2 \sinh 2 \Gr_0 e^{-2n\Gr_0}\right] \Bpsi_{2,n} + \Gm \Ga_2 e^{-2n\Gr_0} \Bpsi_{4,n},\nonumber\\
\BK^* [\Bpsi_{4,n}](\Go) & = \Gm \Ga_1 e^{- 2 n \Gr_0} \Bpsi_{2,n} - k_0 \Bpsi_{4,n}
\end{align}
with $\Gb = i$.

We see from \eqnref{1,3} that for each $n$ $\BK^*$ acts on the space spanned by $\Bpsi_{1,n}$ and $\Bpsi_{3,n}$ like the matrix
$$
\begin{bmatrix}
\ds k_0 - 2 n \Gm \Ga_2 \sinh 2 \Gr_0 e^{-2n\Gr_0} & \Gm \Ga_2 e^{-2n\Gr_0} \\
\Gm \Ga_1 e^{- 2 n \Gr_0} & - k_0 \end{bmatrix} .
$$
So by finding the eigenvalues and eigenvectors of this matrix, one can see that $k_{1,n}$ and $k_{3,n}$ in \eqnref{Glone} are eigenvalues and $\BGvf_{1,n}$ and $\BGvf_{3,n}$ in \eqnref{BGvfone} are corresponding eigenfunctions. One can also see from from \eqnref{2,4} that $k_{2,n}$ and $k_{4,n}$ are eigenvalues and $\BGvf_{2,n}$ and $\BGvf_{4,n}$ are corresponding eigenfunctions.

\section{Proofs of CALR} \label{app:C}

Let $k_{j,n}$ ($j=1, \ldots, 4$, $n=1, \ldots$) be eigenvalues of the NP operator given in \eqnref{Glone} and $\BGvf_{j,n}$ be corresponding eigenfunctions given in \eqnref{BGvfone}. Put $\BGf_{j, n} := \BGvf_{j, n} / \| \BGvf_{j, n} \|_*$ and
\beq
\Ga_{j, n}(\Bz) := ( \BGf_{j, n}, \p_\Gv \BF_{\Bz}  )_*. \nonumber
\eeq
Then we have from \eqnref{eq:soltion_representation} and \eqnref{BGvfGd} that
\beq
\Bu_\Gd(\Bx) - \BF_{\Bz}(\Bx) = \sum_{j = 1}^4 \sum_{n} \frac{\Ga_{j, n}(\Bz)}{k_\Gd(c) - k_{j, n}} \BS [\BGf_{j, n}] (\Bx), \label{u_delta-F_z}
\eeq
and from \eqnref{EBu} that
\beq\label{EBu10}
E(\Bu_\Gd - \BF_z) \approx \sum_{j=1}^4 \sum_{n} \frac{|\Ga_{j, n}(\Bz)|^2 }{|k_\Gd(c)-k_{j,n}|^2}  .
\eeq
We obtain from Green's formula and the jump relation \eqnref{j-single} that
$$
\Ga_{j, n}(\Bz) = - \la \BGf_{j, n}, \BS[\p_\Gv \BF_{\Bz}]  \ra = - \la \p_\Gv \BS[\BGf_{j, n}]_-, \BF_{\Bz}  \ra
= \left( - k_{j,n} + \frac{1}{2} \right) \la \BGf_{j, n}, \BF_{\Bz}  \ra
$$
So we have from \eqnref{dipolesource} that
$$
\Ga_{j, n}(\Bz) = \left( k_{j,n} - \frac{1}{2} \right) \left( \BA \nabla \right)^T \BS [\BGf_{j, n}] (\Bz),
$$
Thanks to \eqnref{Ck1} we have
\beq
|\Ga_{j, n}(\Bz)| \approx \frac{ \left| \left( \BA \nabla \right)^T \BS [\BGvf_{j, n}] (\Bz) \right|}{\| \BGvf_{j, n} \|_*}. \label{alpha_n(z)}
\eeq

We now estimate $\left( \BA \nabla \right)^T \BS [\BGvf_{j, n}] (\Bz)$. Let us compute $\left\| \BGvf_{j, n} \right\|_*$ first.
From \eqnref{Bpsidef} and Lemma \ref{prop:single}, one can easily see that
\begin{align*}
  \left\| \Bpsi_{1, n} \right\|_*^2 =  - \left\la \Bpsi_{1, n}, \BS [\Bpsi_{1, n}] \right\ra
  = \Gp \left( \frac{\Ga_1}{n} - 2 \Ga_2 \sinh{2 \Gr_0} e^{- 2 n \Gr_0} \right) .
\end{align*}
We can also see that
\begin{align*}
\left\| \Bpsi_{2, n} \right\|_*^2 &= \pi \left( \frac{\Ga_1}{n} + 2 \Ga_2 e^{- 2 n \Gr_0} \sinh{2 \rho_0} \right), \\
\left\| \Bpsi_{3, n} \right\|_*^2 &= \left\| \Bpsi_{4, n} \right\|_*^2 = \frac{\pi \Gk \Ga_2}{n}.
\end{align*}
In addition, we have
$$
\left( \Bpsi_{1, n}, \Bpsi_{3, n} \right)_* = \left( \Bpsi_{2, n}, \Bpsi_{4, n} \right)_* = \frac{\pi \Ga_1 e^{-2 n \Gr_0}}{n}.
$$
It then follows from \eqref{BGvfone} and \eqnref{pn} that
\begin{align}
  \left\| \BGvf_{1, n} \right\|_*^2 = & - \left\la \Bpsi_{1, n} + \frac{p_n}{k_0 + k_{1, n}} \Bpsi_{3, n}, \BS [\Bpsi_{1, n}] + \frac{p_n}{k_0 + k_{1, n}} \BS [\Bpsi_{3, n}] \right\ra \nonumber \\
  = & \left\| \Bpsi_{1, n} \right\|_*^2 + \left( \frac{p_n}{k_0 + k_{1, n}} \right)^2 \left\| \Bpsi_{3, n} \right\|_*^2 + \frac{2 p_n}{k_0 + k_{1, n}} \left( \Bpsi_{1, n}, \Bpsi_{3, n} \right)_* \nonumber \\
  = & \frac{\pi \Ga_1}{n} + e^{- 2 n \Gr_0} O(1). \label{norm_varphi_1}
\end{align}
In the same way, we also obtain
\begin{align}
\left\| \BGvf_{2, n} \right\|_*^2 &= \frac{\pi \Ga_1}{n} + e^{- 2 n \Gr_0} O(1), \label{norm_varphi_2} \\
\left\| \BGvf_{3, n} \right\|_*^2 &= \frac{\pi \Gk \Ga_2}{n} + n e^{- 4 n \Gr_0} O(1), \label{norm_varphi_3} \\
\left\| \BGvf_{4, n} \right\|_*^2 &= \frac{\pi \Gk \Ga_2}{n} + n e^{- 4 n \Gr_0} O(1). \label{norm_varphi_4}
\end{align}

Let us introduce two notation to make expressions short. Let $(\Gr, \Go)$ be the elliptic coordinates of $\Bz$ and let
\beq\label{bGrGo}
\Bb(\Bz) :=
  \begin{bmatrix}
    \cos \Go \sinh \Gr \\
    \sin \Go \cosh \Gr
  \end{bmatrix},
\eeq
and
\beq
 \wBU(\Bz)= (e^{2(\Gr - \Gr_0)} - e^{-2(\Gr - \Gr_0)}) \BI + (e^{2\Gr_0} - e^{2\Gr}) \BU(-2\Go) + (e^{-2\Gr} - e^{-2\Gr_0}) \BU(2\Go).
\eeq
where $\BU(\Gt)$ is the rotation by the angle $\Gt$.

\begin{lemma}\label{lem:nonsin}
  The matrix $\wBU(\Bz)$ is non-singular for any $0 \le \Go \le \Gp / 2$ and $\Gr > \Gr_0$.
\end{lemma}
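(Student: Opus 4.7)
The plan is to exploit the rotational structure of $\wBU$ to reduce the non-singularity question to the non-vanishing of a single complex number. Set $\BJ := \BU(\Gp/2)$, so that $\BJ^2 = -\BI$ and the subalgebra $\Rbb\BI + \Rbb\BJ$ is isomorphic to $\Cbb$ through $p\BI + q\BJ \leftrightarrow p + iq$. Every planar rotation lies in this subalgebra since $\BU(\Gt) = \cos\Gt\,\BI + \sin\Gt\,\BJ$, and on it the determinant is $p^2 + q^2 = |p + iq|^2$. Hence $\wBU(\Bz)$ is non-singular if and only if its image $F(\Bz) \in \Cbb$ under the isomorphism is nonzero.

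Expanding the definition of $\wBU(\Bz)$ will give
\begin{equation*}
F(\Bz) = A + B e^{-2i\Go} + C e^{2i\Go},
\end{equation*}
with $A = e^{2(\Gr - \Gr_0)} - e^{-2(\Gr - \Gr_0)}$, $B = e^{2\Gr_0} - e^{2\Gr}$, $C = e^{-2\Gr} - e^{-2\Gr_0}$. Introducing the shortcuts $u = e^{2\Gr}$, $u_0 = e^{2\Gr_0}$, $w = e^{2i\Go}$, I would next multiply $F(\Bz)$ by $u u_0 w$ to clear denominators and rearrange the resulting Laurent polynomial as a quadratic in $w$. The key---and only non-routine---observation is that this quadratic in $w$ has $u$ and $u_0$ as its two roots, yielding the clean factorization
\begin{equation*}
u u_0 w \, F(\Bz) = -(u - u_0)(w - u)(w - u_0).
\end{equation*}

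The conclusion is then immediate. Since the domain is a genuine (non-circular) ellipse, $\Gr_0 > 0$, so $u_0 > 1$; combined with $\Gr > \Gr_0$ this gives $u > u_0 > 1$, while $|w| = 1$. Hence $u - u_0 \neq 0$ and $w$ equals neither $u$ nor $u_0$, so $F(\Bz) \neq 0$ and $\wBU(\Bz)$ is non-singular. I expect no real difficulty beyond spotting this factorization; the alternative approach of splitting into the cases $\sin 2\Go = 0$ versus $\sin 2\Go \neq 0$ and separately verifying that the $(1,1)$-entry $A + (B+C)\cos 2\Go$ of $\wBU$ does not vanish at the boundary angles $\Go = 0$ and $\Go = \Gp/2$ would require more careful bookkeeping of exponential sums, so the complex-number reformulation is the main simplification.
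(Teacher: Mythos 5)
Your proof is correct, and it takes a genuinely different and cleaner route than the paper's. The paper substitutes $\Gn := \Gr - \Gr_0$, notes the coefficient $e^{2\Gr_0}(e^{2\Gn}-1)$ of $\BU(-2\Go)$ strictly dominates the coefficient $e^{-2\Gr_0}(1-e^{-2\Gn})$ of $\BU(2\Go)$, and argues geometrically that an unequally weighted combination of the two rotated vectors $\BU(\pm 2\Go)\Ba$ cannot be parallel to $\Ba$ when $0<\Go<\pi/2$; it then handles the boundary cases $\Go=0$ and $\Go=\pi/2$ separately by checking that the scalar $(1,1)$-entry of $\wBU$ does not vanish. You instead observe that every matrix appearing in $\wBU$ lies in the commutative subalgebra $\Rbb\BI+\Rbb\BJ\cong\Cbb$ on which $\det(p\BI+q\BJ)=p^2+q^2=|p+iq|^2$, reduce non-singularity of $\wBU$ to the non-vanishing of the complex number $F=A+Be^{-2i\Go}+Ce^{2i\Go}$, and produce the factorization
\begin{equation*}
u u_0 w\, F = (u^2-u_0^2)w + u u_0(u_0-u) + (u_0-u)w^2 = (u_0-u)\bigl(w^2 - (u+u_0)w + u u_0\bigr) = -(u-u_0)(w-u)(w-u_0),
\end{equation*}
which is manifestly nonzero since $u>u_0>1$ while $|w|=1$. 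This one-line algebraic identity eliminates the paper's case analysis entirely, and as a small bonus it is uniform in $\Go$ over the full circle, not just $[0,\pi/2]$. The only hypothesis you use beyond the lemma's $\Gr>\Gr_0$ is $\Gr_0>0$, which is indeed guaranteed since the elliptic coordinates require $a>b$, hence $\sinh\Gr_0=b/R>0$.
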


\begin{proof}
Put $\Gn := \Gr - \Gr_0$. Then, we have
$$
\wBU(\Gr, \Go) = \left( e^{2 \Gn} - e^{- 2 \Gn} \right) \BI - e^{2 \rho_0} \left( e^{2 \eta} - 1 \right) \BU(- 2 \Go) - e^{- 2 \rho_0} \left(1 - e^{- 2 \eta} \right) \BU(2 \Go).
$$

Assume that $0 < \Go < \Gp / 2$. Since $\Gn>0$, we have
$$
e^{2 \Gr_0} \left( e^{2 \Gn} - 1 \right) > e^{- 2 \Gr_0} \left( 1 - e^{- 2 \Gn} \right) > 0.
$$
It means that
$$
e^{2 \Gr_0} \left( e^{2 \Gn} - 1 \right) \BU(- 2 \Go) \Ba + e^{- 2 \Gr_0} \left( 1 - e^{- 2 \Gn} \right) \BU(2 \Go) \Ba \neq c \Ba
$$
for any real number $c$ and constant vector $\Ba$, which implies that
$\wBU(\Gr, \Go) \Ba \neq 0$, and hence $\wBU(\Gr, \Go)$ is non-singular

If $\Go = 0$, one can easily show that
$$
e^{2 \Gn} - e^{- 2 \Gn} - e^{2 \Gr_0} \left( e^{2 \Gn} - 1 \right) - e^{- 2 \Gr_0} (1 - e^{- 2 \Gn}) \neq 0
$$
for any $\Gn , \Gr_0 > 0$, and hence $\wBU(\Gr, 0)$ is non-singular. Similarly one can see that $\wBU(\Gr, \pi/2)$ is non-singular.
\end{proof}

Through long but straightforward computations, which will be presented at the end of this subsection, we see that
\begin{align}
(\BA \nabla)^T \BS[\Bpsi_{1, n}] (\Bz) = & \frac{R \Ga_1 e^{- n (\Gr - \Gr_0)}}{2 h(\Gr, \Go)^2} \left( \Ba_1 + \BU(\pi / 2) \Ba_2 \right) \cdot \BU(n \Go) \Bb(\Bz) , \label{nabla,S-1} \\
(\BA \nabla)^T \BS[\Bpsi_{2, n}] (\Bz) = & \frac{R \Ga_1 e^{- n (\Gr - \Gr_0)}}{2 h(\Gr, \Go)^2} \left( \BU(- \pi / 2) \Ba_1 + \Ba_2 \right) \cdot \BU(n \Go) \Bb(\Bz), \label{nabla,S-2}
\end{align}
modulo $n e^{- n (\Gr + \Gr_0)} O(1)$, and
\begin{align}
 (\BA \nabla)^T \BS[\Bpsi_{3, n}] (\Bz) = & - \frac{n\Ga_2 R^3 e^{-n(\Gr - \Gr_0)}}{8 h(\Gr, \Go)^4} \left[ \wBU (\Gr,\Go) (\Ba_1 + \BU\left(\Gp/2\right) \Ba_2 ) \right] \cdot \BU(n\Go) \Bb(\Bz), \label{nabla,S-3} \\
  ( \BA \nabla)^T \BS [\Bpsi_{4, n}] (\Bz) = & \frac{n \Ga_2 R^3 e^{- n \left( \Gr - \Gr_0 \right)}}{8 h(\Gr, \Go)^4} \left[\wBU(\Gr, \Go) \left( \BU(-\Gp / 2) \Ba_1 + \Ba_2 \right)\right] \cdot \BU(n \Go) \Bb(\Bz), \label{nabla,S-4}
\end{align}
modulo $e^{- n \left( \Gr - \Gr_0 \right)} O(1)$.

We have from \eqref{BGvfone} that
$$
\left( \BA \nabla \right)^T \BS [\BGvf_{1, n}] (\Bz) = \left( \BA \nabla \right)^T \BS [\Bpsi_{1, n}] (\Bz) + \frac{p_n}{k_0 + k_{1,n}} \left( \BA \nabla \right)^T \BS [\Bpsi_{3, n}] (\Bz) .
$$
One can see from \eqnref{pn} and \eqnref{nabla,S-3} that
\begin{align*}
\left|\frac{p_n}{\Gk + \Gl_{1,n}} \left( \BA \nabla \right)^T \BS [\Bpsi_{3, n}] (\Bz) \right| \lesssim n e^{- n (\Gr + \Gr_0)} .
\end{align*}
So, we have from \eqnref{nabla,S-1} that
\beq\label{1234}
\left( \BA \nabla \right)^T \BS [\BGvf_{1, n}] (\Bz) = \frac{R \Ga_1 e^{- n (\Gr - \Gr_0)}}{2 h(\Gr, \Go)^2} \left( \Ba_1 + \BU(\pi / 2) \Ba_2 \right) \cdot \BU(n \Go) \Bb(\Bz)
\eeq
modulo $n e^{- n (\Gr + \Gr_0)} O(1)$. Similarly one can show using \eqnref{nabla,S-2} and \eqnref{nabla,S-4} that
\beq\label{1235}
(\BA \nabla)^T \BS[\BGvf_{2, n}] (\Bz) = \frac{R \Ga_1 e^{- n (\Gr - \Gr_0)}}{2 h(\Gr, \Go)^2} \left( \BU(- \pi / 2) \Ba_1 + \Ba_2 \right) \cdot \BU(n \Go) \Bb(\Bz),
\eeq
modulo $n e^{- n (\Gr + \Gr_0)} O(1)$.

Observe that
\begin{align*}
&\left| \left( \Ba_1 + \BU(\pi / 2) \Ba_2 \right) \cdot \BU(n \Go) \Bb(\Bz) \right|^2 + \left| \left( \BU(- \pi / 2) \Ba_1 + \Ba_2 \right) \cdot \BU(n \Go) \Bb(\Bz) \right|^2 \\
& = \left| \Ba_1 + \BU(\pi / 2) \Ba_2 \right|^2 | \Bb(\Bz) |^2 .
\end{align*}
It then follows from \eqnref{1234} and \eqnref{1235} that
$$
\left| \left( \BA \nabla \right)^T \BS [\BGvf_{1, n}] (\Bz) \right|^2 + \left|(\BA \nabla)^T \BS[\BGvf_{2, n}] (\Bz) \right|^2
 = \frac{R^2 \Ga_1^2 e^{- 2n (\Gr - \Gr_0)}}{4 h(\Gr, \Go)^4} \left| \Ba_1 + \BU(\pi / 2) \Ba_2 \right|^2 | \Bb(\Bz) |^2 ,
$$
modulo $n^2 e^{- 2n (\Gr + \Gr_0)} O(1)$. We choose constant vectors $\Ba_1$ and $\Ba_2$ so that
\beq
  \Ba_1 + \BU(\Gp / 2) \Ba_2 \neq 0.
\eeq
Then we have
$$
\left| \left( \BA \nabla \right)^T \BS [\BGvf_{1, n}] (\Bz) \right|^2 + \left|(\BA \nabla)^T \BS[\BGvf_{2, n}] (\Bz) \right|^2
\approx e^{- 2n (\Gr - \Gr_0)} ,
$$
which, together with \eqnref{norm_varphi_1} and \eqnref{norm_varphi_2}, shows
\beq
  | \Ga_{1, n}(\Bz) |^2 + | \Ga_{2, n}(\Bz) |^2 \approx n e^{- 2 n \left( \Gr - \Gr_0 \right)} + n^2 e^{-2 n \Gr} O(1), \label{eq:alpha_1+alpha_2}
\eeq
Similarly one can show that
\beq
  | \Ga_{3, n}(\Bz) |^2 + | \Ga_{4, n}(\Bz) |^2 \approx n^3 e^{- 2 n \left( \Gr - \Gr_0 \right)} + n^2 e^{-2 n \left( \Gr - \Gr_0 \right)} O(1). \label{eq:alpha_3+alpha_4}
\eeq

\medskip
\noindent{\sl Proof of Theorem \ref{thm:lambda_infty_blowup}}.
In this proof $(\Gr, \Go)$ denotes the elliptic coordinates of $\Bz$. Since
\beq\label{kGdc}
|k_\Gd(c)-k_{j,n}|^2 \approx \Gd^2 + |k(c)-k_{j,n}|^2,
\eeq
it follows from \eqnref{EBu10} that
\beq\label{EBu100}
E(\Bu_\Gd - \BF_z) \approx \sum_{j=1}^4 \sum_{n} \frac{|\Ga_{j, n}(\Bz)|^2 }{\Gd^2 + |k(c)-k_{j,n}|^2}  .
\eeq
Since $k(c)=k_0$, we see from \eqnref{asympone} that
\begin{align}
| k(c) - k_{j, n} | = c_0 n e^{- 2 n \rho_0} + n^{3/2} e^{- 3 n \rho_0} O(1), \quad j=1,2, \label{k12n}
\end{align}
where
\beq
  c_0 := \frac{q}{\Gl + 2 \mu}. \nonumber
\eeq
We also see from \eqnref{asympone} that
\beq\label{k34n}
 | k(c) - k_{j, n} | \ge C
\eeq
for some constant $C$ independent of $n$ for $j=3,4$.
It then follows from \eqref{eq:alpha_1+alpha_2} and \eqref{eq:alpha_3+alpha_4} that
\beq\label{EBu110}
E(\Bu_\Gd - \BF_z) \approx \sum_{n = 1}^\infty \frac{n e^{- 2 n \left( \Gr - \Gr_0 \right)}}{\Gd^2 + c_0^2 n^2 e^{- 4 n \Gr_0}}.
\eeq

For $0 < \Gd \ll 1$, let $N \ge 1$ be the first integer such that $\Gd > c_0 N e^{- 2 N \Gr_0}$.
Then we have
\beq
\Gd^{1 / N} \sim c_0^{1 / N} N^{1 / N} e^{- 2 \Gr_0} = e^{- 2 \Gr_0} + o(1), \nonumber
\eeq
that is,
\beq
N \sim - \frac{1}{2 \Gr_0} \log{\Gd}. \nonumber
\eeq
We then write
\beq\label{EBu111}
\sum_{n = 1}^\infty \frac{n e^{- 2 n \left( \Gr - \Gr_0 \right)}}{\Gd^2 + c_0^2 n^2 e^{- 4 n \Gr_0}} = \sum_{n \le N} + \sum_{n > N} =: I_N+ II_N.
\eeq

For the first term we have
$$
I_N \sim  \sum_{n \le N} \frac{n e^{- 2 n (\Gr - \Gr_0)}}{n^2 e^{- 4 n \Gr_0}} = \sum_{n \le N} \frac{e^{2 n (3 \Gr_0 - \Gr)}}{n} \sim \int_1^N e^{2 (3 \Gr_0 - \Gr) s} \frac{ds}s.
$$
If $\Gr_0 < \Gr < 3 \Gr_0$, then one sees using an integration by parts that
\begin{align*}
\int_1^N e^{2 (3 \Gr_0 - \Gr) s} \frac{ds}s \sim & N^{- 1} e^{2 N (3 \Gr_0 - \Gr)}
\end{align*}
as $N \to \infty$. It is easy to see that $\int_1^N e^{2 (3 \Gr_0 - \Gr) s} \frac{ds}s \sim \log N$ if $\Gr = 3 \Gr_0$, and $\int_1^N e^{2 (3 \Gr_0 - \Gr) s} \frac{ds}s \sim 1$ if
$\Gr > 3 \Gr_0$. So we obtain that
  \beq
  \sum_{n \le N} \frac{n e^{2 n (3 \Gr_0 - \Gr)}}{n^2 e^{- 4 n \Gr_0}} \sim
  \begin{cases}
    \left| \log{\Gd} \right|^{- 1} \Gd^{- (3 - \Gr / \Gr_0)}, & \text{ if } \Gr_0 < \Gr < 3 \Gr_0, \\
    \log{\left| \log{\Gd} \right|}, & \text{ if } \Gr = 3 \Gr_0, \\
    1, & \text{ if } \Gr > 3 \Gr_0.
  \end{cases} \nonumber
  \eeq
On the other hand, we have
\begin{align*}
II_N \sim \frac{1}{\Gd^2} \sum_{n > N} n e^{- 2 n (\Gr - \Gr_0)}
    \sim  \left| \log{\Gd} \right| \Gd^{\Gr / \Gr_0 - 3}
  \end{align*}
So, we have from \eqnref{EBu110} and \eqnref{EBu111}
$$
E(\Bu_\Gd - \BF_z) \sim
    \begin{cases}
      \left\vert \log{\Gd} \right\vert \Gd^{- 3 + \Gr / \Gr_0} & \text{ if } \Gr_0 < \Gr \le 3 \Gr_0, \\
      1 & \text{ if } \Gr > 3 \Gr_0
    \end{cases}
$$
Since $E(\BF_z) < \infty$, we have \eqnref{EBuGk}, and the proof is complete. \qed

\bigskip

\noindent{\sl Proof of Theorem \ref{thm:solution_bound_1}}.
Here we denote by $(\Gr, \Go)$ the elliptic coordinates of $\Bx$ and by $(\Gr_\Bz, \Go_\Bz)$ those of $\Bz$. It follows from \eqnref{u_delta-F_z} and \eqnref{kGdc} that
$$
\left| \Bu_\Gd(\Bx) - \BF_{\Bz}(\Bx) \right| \le \sum_{j = 1}^4 \sum_{n} \frac{|\Ga_{j, n}(\Bz)|}{(\Gd + |k(c) - k_{j, n}|) \| \BGvf_{j,n} \|_*} \left| \BS [\BGvf_{j, n}] (\Bx) \right|.
$$
We then have from \eqnref{k12n} and \eqnref{k34n} that
$$
\left| \Bu_\Gd(\Bx) - \BF_{\Bz}(\Bx) \right| \lesssim \sum_{j = 1}^2 \sum_{n} \frac{|\Ga_{j, n}(\Bz)|}{n e^{- 2 n \rho_0} \| \BGvf_{j,n} \|_*} \left| \BS [\BGvf_{j, n}] (\Bx) \right| + \sum_{j = 3}^4 \sum_{n} \frac{|\Ga_{j, n}(\Bz)|}{ \| \BGvf_{j,n} \|_*} \left| \BS [\BGvf_{j, n}] (\Bx) \right|.
$$
It then follows from \eqnref{norm_varphi_1}-\eqnref{norm_varphi_4}, \eqnref{psibound}, \eqnref{psibound34}, \eqnref{eq:alpha_1+alpha_2}, and \eqnref{eq:alpha_3+alpha_4}  that
\begin{align*}
\left| \Bu_\Gd(\Bx) - \BF_{\Bz}(\Bx) \right| & \lesssim \sum_{j = 1}^2 \sum_{n} \frac{n^{1/2} e^{- n \left( \Gr_\Bz - \Gr_0 \right)}}{n e^{- 2 n \rho_0} n^{-1/2}} \frac{e^{-n(\Gr - \Gr_0)}}{n} + \sum_{j = 3}^4 \sum_{n} \frac{n^{3/2} e^{- n \left( \Gr_\Bz - \Gr_0 \right)}}{n^{-1/2}} e^{-n(\Gr - \Gr_0)} \\
& \lesssim \sum_{n = 1}^\infty \frac{e^{- n \left( \Gr + \Gr_{\Bz} - 4 \Gr_0 \right)}}{n}.
\end{align*}
This completes the proof. \qed

\bigskip
\noindent{\sl Proof of Theorem \ref{thm:-lambda_infty_blowup}}.
Since $k(c) = - k_0$, we obtain from \eqnref{asympone}
\beq\label{1111}
| k(c) - k_{j, n} | \approx 1, \quad j=1,2,
\eeq
and
\beq\label{2222}
|k(c) - k_{j, n} | = \frac{q^2}{4 \Gm \left( \Gl + 2 \Gm \right)^2}  n^2 e^{- 4 n \Gr_0} + n e^{- 4 n \Gr_0} O(1), \quad j=3,4.
\eeq
The rest of the proof is similar to that of Theorem~\ref{thm:lambda_infty_blowup}. \qed

\bigskip
Theorem \ref{thm:solution_bound_2} can be proved in the same way as Theorem \ref{thm:solution_bound_1} using \eqnref{1111} and \eqnref{2222}.

\bigskip

Let us prove \eqref{nabla,S-1}-\eqref{nabla,S-4}. We prove only \eqref{nabla,S-3} which is most involved and the rest can be proved similarly.
Let
\begin{align*}
h_1(\Gr, \Go) := \frac{e^{2 ( \Gr -\Gr_0 )} - e^{- 2 ( \Gr - \Gr_0)}}{h(\Gr, \Go)^2}, \quad
h_2(\Gr, \Go) := \frac{e^{2\Gr_0} - e^{2\Gr}}{h(\Gr,\Go)^2}, \quad
h_3(\Gr, \Go) := \frac{e^{-2\Gr} - e^{-2\Gr_0}}{h(\Gr,\Go)^2},
\end{align*}
where $h(\Gr,\Go)$ is defined in \eqnref{hGr}, and let
\begin{align*}
\Bf_1(\Gr, \Go) & = \begin{bmatrix} f_{11}(\Gr, \Go) \\ f_{12}(\Gr, \Go) \end{bmatrix}  := h_1(\Gr, \Go) e^{- n (\Gr - \Gr_0 )}
 \begin{bmatrix}\cos n \Go \\ \sin n \Go \end{bmatrix},  \\
\Bf_2(\Gr, \Go) & = \begin{bmatrix} f_{21}(\Gr, \Go) \\ f_{22}(\Gr, \Go) \end{bmatrix} := h_2(\Gr, \Go) e^{- n (\Gr - \Gr_0 )}
 \begin{bmatrix} \cos (n + 2) \Go \\    \sin ( n + 2 ) \Go  \end{bmatrix},  \\
\Bf_3(\Gr, \Go) & = \begin{bmatrix} f_{31}(\Gr, \Go) \\ f_{32}(\Gr, \Go) \end{bmatrix}  := h_3(\Gr, \Go) e^{- n (\Gr - \Gr_0)}
 \begin{bmatrix} \cos (n - 2) \Go \\ \sin (n-2) \Go \end{bmatrix}.
\end{align*}
Then one can see Lemma \ref{ap-prop} that
\beq
\BS [\Bpsi_{3, n}] (\Gr, \Go) = \frac{\Ga_2 R^2}{8} \left[ \Bf_1(\Gr, \Go) + \Bf_2(\Gr, \Go) + \Bf_3(\Gr, \Go) \right] + n^{- 1} e^{- n \left( \Gr - \Gr_0 \right)} O(1). \label{S_1=F_1-F_2-F_3}
\eeq

Straightforward computations yield
\begin{align*}
\p_{\Gr} f_{11} & = \left( \p_{\Gr} h_1 - n h_1 \right) e^{-n(\Gr - \Gr_0)} \cos n \Go , \\
\p_{\Go} f_{11} & = e^{-n(\Gr - \Gr_0)} \left(\p_{\Go} h_1 \cos n \Go - n h_1  \sin n \Go \right),
\end{align*}
which can be rewritten as
\beq\label{f11}
\begin{bmatrix} \p_{\Gr} \\ \p_{\Go}\end{bmatrix} f_{11} = e^{-n(\Gr - \Gr_0)} \left( \cos n \Go \begin{bmatrix} \p_{\Gr} \\ \p_{\Go}\end{bmatrix} h_1 - nh_1  \begin{bmatrix} \cos n \Go \\ \sin n \Go \end{bmatrix} \right).
\eeq
Recall the following chain rule:
$$
\nabla = \frac{R}{h^2} \BC(\Gr,\Go) \begin{bmatrix} \p_{\Gr} \\ \p_{\Go}\end{bmatrix},
$$
where
$$
\BC(\Gr,\Go) = \begin{bmatrix} \cos \Go \sinh \Gr & - \sin \Go \cosh \Gr \\ \sin \Go \cosh \Gr & \cos \Go \sinh \Gr\end{bmatrix}.
$$
It then follows from \eqnref{f11} that
$$
\nabla f_{11}  = e^{-n(\Gr - \Gr_0)}\left[(\nabla h_1) \cos n \Go - \frac{R}{h^2} n h_1  \BC(\Gr,\Go) \begin{bmatrix} \cos n \Go \\ \sin n \Go \end{bmatrix} \right].
$$
Observe that
$$
\BC(\Gr,\Go) \begin{bmatrix} \cos n \Go \\ \sin n \Go \end{bmatrix} = \BU(n\Go) \Bb(\Gr,\Go),
$$
where $\Bb(\Gr,\Go)$ is defined in \eqnref{bGrGo}. So, we have
\beq
\nabla f_{11} = e^{-n(\Gr - \Gr_0)}\left[(\nabla h_1) \cos n \Go - \frac{R}{h^2} n h_1 \BU(n\Go) \Bb(\Gr,\Go)\right].
\eeq
Likewise, one can show that
\beq
\nabla f_{12} = e^{-n(\Gr - \Gr_0)} \left[(\nabla h_1 ) \sin n \Go + \frac{R}{h^2} n h_1 \BU(\Gp/2) \BU(n \Go) \Bb(\Gr, \Go)\right].
\eeq

It follows that
\begin{align*}
(\BA \nabla)^T \Bf_1 & = \Ba_1 \cdot \nabla (F_1)_1 + \Ba_2 \cdot \nabla (F_1)_2\nonumber\\
& = e^{-n(\Gr - \Gr_0)} \Big[ (\Ba_1 \cdot \nabla h_1) \cos n \Go - \frac{nR h_1}{h^2} \Ba_1 \cdot \BU(n \Go) \Bb(\Gr, \Go)\nonumber\\
 & \hspace{2cm} + (\Ba_2 \cdot \nabla h_1) \sin n \Go + \frac{nRh_1}{h^2} \Ba_2 \cdot \BU(\Gp/2)  \BU(n \Go) \Bb(\Gr,\Go) \Big].
\end{align*}
Note that $\p^\Ga h_j$, $\left\vert \Ga \right\vert \le 1$, $\Ga \in \Nbb^2$, $j = 1 , 2, 3$, is uniformly bounded as $n \to \infty$. Thus we have
\begin{align}\label{F-1}
(\BA \nabla)^T \Bf_1 & = \frac{nR e^{-n(\Gr - \Gr_0)} h_1}{h^2} \left[ - \Ba_1 \cdot \BU(n \Go) \Bb(\Gr, \Go) + \Ba_2 \cdot \BU(\Gp/2)  \BU(n \Go) \Bb(\Gr,\Go)\right]\nonumber\\
& = -\frac{nR e^{-n(\Gr - \Gr_0)} h_1}{h^2} \left[\Ba_1 + \BU(\Gp/2)  \Ba_2\right] \cdot \BU(n\Go) \Bb(\Gr,\Go) .
\end{align}
where the equalities hold modulo $e^{-n(\Gr - \Gr_0)} O(1)$ terms.

Similarly, one can show that
\begin{align*}
\nabla f_{21} & = e^{-n(\Gr - \Gr_0)} \Big[(\nabla h_2) \cos (n+2) \Go \\
& \quad\quad  - \frac{R h_2}{h^2} \big\{ n\BU(2\Go)\BU(n\Go) + 2 \sin (n+2) \Go \BU(\Gp/2) \big\} \Bb \Big],\\
\nabla f_{22} & = e^{-n(\Gr - \Gr_0)} \Big[ (\nabla h_2) \sin (n+2)\Go \\
& \quad\quad - \frac{Rh_2}{h^2} \BU(-\Gp/2) \big\{ n\BU(2\Go)\BU(n\Go) + 2 \cos (n+2) \Go \BI \big\} \Bb \Big],
\end{align*}
and
\begin{align*}
\nabla f_{31} & = e^{-n(\Gr - \Gr_0)} \Big[ (\nabla h_3) \cos (n-2) \Go \\
& \quad\quad - \frac{R h_3}{h^2} \big\{n \BU(-2\Go)\BU(n\Go) - 2 \sin (n-2) \Go \BU(\Gp/2) \big\} \Bb \Big],\\
\nabla f_{32} & = e^{-n(\Gr - \Gr_0)} \Big[ (\nabla h_3) \sin (n-2) \Go \\
& \quad\quad - \frac{R h_3}{h^2} \BU (-\Gp/2) \big\{ n\BU(-2\Go)\BU(n\Go) + 2 \cos (n-2)\Go \BI \big\} \Bb \Big].
\end{align*}
So, we have
\begin{align}
(\BA \nabla)^T \Bf_2 & = - \frac{nR e^{-n(\Gr - \Gr_0)} h_2 }{h^2} \BU(-2\Go) \left[ \Ba_1 + \BU(\Gp/2) \Ba_2\right] \cdot \BU(n\Go) \Bb(\Gr,\Go), \label{F-2}\\
(\BA \nabla)^T \Bf_3 & = - \frac{nR e^{-n(\Gr - \Gr_0)} h_3 }{h^2} \BU(2\Go) \left[ \Ba_1 + \BU(\Gp/2) \Ba_2\right] \cdot \BU(n\Go) \Bb(\Gr,\Go) . \label{F-3}
\end{align}

Note that
$$
h_1 \BI + h_2 \BU(-2\Go) + h_3 \BU(2\Go) = h^2 \wBU.
$$
Since
\begin{align*}
(\BA \nabla)^T \BS[\Bpsi_{3,n}] = \frac{\Ga_2 R^2}{8} \left( (\BA \nabla)^T\Bf_1 + (\BA \nabla)^T\Bf_2 + (\BA \nabla)^T\Bf_3 \right) + n^{- 1} e^{- n \left( \Gr - \Gr_0 \right)} O(1),
\end{align*}
we obtain \eqref{nabla,S-3} from \eqref{F-1}, \eqref{F-2}, and \eqref{F-3}.


\end{document}